\newcommand{\leqnomode}{\tagsleft@true}
\newcommand{\reqnomode}{\tagsleft@false}
\numberwithin{equation}{section}
\newtheorem{theorem}{Theorem}[section]
\newtheorem{lemma}[theorem]{Lemma}
\newtheorem{corollary}[theorem]{Corollary}
\newtheorem{proposition}[theorem]{Proposition}
\newtheorem{remark}[theorem]{Remark}
\newtheorem{definition}[theorem]{Definition}
\newenvironment{acknowledgement}{\textbf{Acknowledgement.}\em}{}
\title{On Fractional Musielak-Sobolev spaces and applications to nonlocal problems}
\author{ J.C. de Albuquerque, L.R.S. de Assis, M.L.M. Carvalho and  A. Salort}
\begin{document}
	
	\maketitle
	
	\begin{abstract}
		In this work, we establish some abstract results on the perspective of the fractional Musielak-Sobolev spaces, such as: uniform convexity, Radon-Riesz property with respect to the modular function, $(S_{+})$-property, Brezis-Lieb type Lemma to the modular function and monotonicity results. Moreover, we apply the theory developed to study the existence of solutions to the following class of nonlocal problems
		\begin{equation*}
			\left\{
			\begin{array}{ll}
				(-\Delta)_{\Phi_{x,y}}^s u = f(x,u),& \mbox{in }\Omega,\\
				u=0,& \mbox{on }\mathbb{R}^N\setminus \Omega,
			\end{array}
			\right.
		\end{equation*}	
		where $N\geq 2$, $\Omega\subset \mathbb{R}^N$ is a bounded domain with Lipschitz boundary $\partial \Omega$ and $f:\Omega \times \mathbb{R} \rightarrow \mathbb{R}$ is a Carath\'{e}odory function not necessarily satisfying the Ambrosetti-Rabinowitz condition. Such class of problems enables the presence of many particular operators, for instance, the fractional operator with variable exponent, double-phase and double-phase with variable exponent operators, anisotropic fractional $p$-Laplacian, among others. 
	\end{abstract}
	
	\bigskip
	
	\noindent{\small\emph{2010 Mathematics Subject Classification:} {\small 46E30; 35R11; 47G20.}
		
	\noindent\emph{Keywords and phrases:}} {\small Fractional Musielak-Sobolev spaces; Monotonicity results; Nonlocal problems.}
	
	\tableofcontents
	
	
	\section{Introduction}
	
	
	
	
	
	\hspace{0,5cm} In the last years, nonlocal problems involving fractional operators and the corresponding fractional Sobolev spaces have been the subject of several researches, both for its theoretical abstract structure and for its concrete applications in different context, such as thin obstacle problem, mathematical finance, phase transitions, optimization, anomalous diffusion, materials science, conservation laws, image processing, minimal surfaces, water waves and many others, see \cite{Di Nezza}. Recently, Azroul et al. \cite{Azroul1,Azroul2} have considered the new fractional Musielak-Sobolev space $W^{s,\Phi_{x,y}}(\Omega)$ and problems driven by nonlocal integro-differential operator of elliptic type defined as follows
	\begin{eqnarray}\label{phi-frac}
		(-\Delta)_{\Phi_{x,y}}^s u(x)\displaystyle:= 2\lim_{\varepsilon\rightarrow 0} \int_{\mathbb{R}^N\setminus B_{\varepsilon}(x)} 	\varphi_{x,y}\left(\frac{|u(x)-u(y)|}{|x-y|^s}\right) \frac{u(x)-u(y)}{|x-y|^s}\frac{\mathrm{d}y}{|x-y|^{N+s}}, \;\; x \in \mathbb{R}^N
	\end{eqnarray}
	 where $s \in (0,1)$, $\Phi(x,y,t)=\int_0^{|t|} \tau \varphi(x,y,\tau)\,\mathrm{d}\tau \; \mbox{and} \; \varphi:\Omega\times\Omega\times (0,+\infty)\rightarrow[0, +\infty)$ are Carath\'{e}odory functions that satisfy some suitable assumptions which will be mentioned later. For convenience of the reader, Section \ref{s0} is devoted to introduce some basic concepts on this subject.
	
	The fractional Musielak-Sobolev space extends many others known concepts in the literature. For instance, in the case that $\Phi$ is independent of $(x,y)$, the operator \eqref{phi-frac} is a natural fractional version of the nonhomogeneous $\Phi$-Laplace operator and the corresponding space is the fractional Orlicz-Sobolev space. To the best of our knowledge, Fern\'andez Bonder and Salort \cite{Bonder} were the first to introduce some results on this context.  For more information on these spaces and applications involving the fractional $\Phi$-Laplace operator, we refer the readers to Azroul et al. \cite{Azroul et al. eigenvalue}, Bahrouni and Ounaies \cite{Bahrouni-Emb}, Bahrouni et al. \cite{A. Bahrouni et al. non-variat.}, Bahrouni et al. \cite{S. Bahrouni et.al}, Fern\'andez Bonder et al. \cite{Bonder et al.}, Salort \cite{Salort eigenvalue} and references therein. On the other hand, when $\Phi$ depends on $(x,y)$, the theory considers also fractional Sobolev spaces with variable exponents and related nonlocal $p(x, \cdot)$-Laplace operator, which is a fractional version of nonhomogeneous $p(x)$-Laplace operator. The first results referring to these fractional spaces and operators were obtained by Kaufmann et al. \cite{Rossi et. al} and complemented by Bahrouni and R\u{a}dulescu \cite{Bahrouni and Radulescu}. In this direction, we also refer the readers to \cite{Azroul0,A. Bahrouni,Biswas et al.,Chammem et al.,Zuo et al.}. Thus, such discussion suggests that the fractional Musielak-Sobolev space extends and unifies the standard fractional Sobolev space, the fractional Sobolev space with variable exponents and the fractional Orlicz-Sobolev space. In this sense, it is natural to study if the known results can be extended to this new setting. As far as we know, the only results on these spaces and operator \eqref{phi-frac} were obtained by Azroul et al. \cite{Azroul1, Azroul2} and Azroul et al. \cite{Azroul3}. 
	
	In \cite{Azroul1}, the authors proved that $W^{s,\Phi_{x,y}}(\Omega)$ is a separable and reflexive Banach space. Moreover, if $\Phi_{x,y}$ satisfies the $\Delta_2$-condition (see Subsection \ref{ss21}) and 
	\begin{equation}\label{hip. convex}
		 t\mapsto\Phi_{x,y}(\sqrt{t}) \;\; \mbox{is convex for all}\; (x,y)\in \Omega\times\Omega,
	\end{equation}
	then $W^{s,\Phi_{x,y}}(\Omega)$ is a uniformly convex space. The proofs are inspired by results obtained in \cite{Mihailescu and Radulescu} for generalized Orlicz-Sobolev spaces. In \cite{S. Bahrouni et.al,A. Bahrouni et al. non-variat., Azroul3}, the convexity assumption \eqref{hip. convex} has been used to obtain the $(S_+)$-property for a suitable functional (see Definition \ref{S+definition}). The $(S_+)$- property plays an important role in the study of solutions for differential equations in Orlicz-Sobolev and Musielak-Sobolev spaces, see for instance \cite{Mihailescu and Radulescu,Xianling Fan,Liu and Zhao, Liu and Dai 1} and references therein. Recently, the $(S_+)$-property was obtained for a very wide class of operators associated to the fractional Orlicz-Sobolev and Musielak-Sobolev spaces, see \cite{Azroul3,S. Bahrouni et.al,A. Bahrouni et al. non-variat.}. In these works, in order to prove that the space is uniformly convex and the $(S_+)$-property, the authors assumed $\Delta_2$-condition and the convexity assumption \eqref{hip. convex}. It is important to mention that hypothesis \eqref{hip. convex} is restrictive, since it excludes classical examples of functions with balanced growth, for instance, $\Phi_{x,y}(t)=|t|^p$ with $1<p<2$, which satisfies $\Delta_2$-condition, but does not satisfy \eqref{hip. convex}. In this work, we will extend these results assuming only that $\Phi$ and its conjugate function satisfy the $\Delta_2$-condition. 
	
	Motivated by the above discussion, our main goal in this paper is extend and complement the previous results on the perspective of the new class of fractional Musielak-Sobolev space $W^{s,\Phi_{x,y}}(\Omega)$ and the related nonlocal integro-differential operator \eqref{phi-frac}. In the abstract point of view, our main contributions are the following:
	\begin{itemize}
		\item[(i)] We prove that $W^{s,\Phi_{x,y}}(\Omega)$ is uniformly convex and the Radon-Riesz property with respect to the modular function;
		\item[(ii)] We prove the $(S_+)$-property;
		\item[(iii)] We prove a Brezis-Lieb type Lemma to the modular function and other convergence results;
		\item[(iv)] We introduce several monotonicity properties of the nonlocal integro-differential operator \eqref{phi-frac};
		\item[(v)] We obtain an existence result for a very general class of nonlocal problems without Ambrosetti-Rabinowitz condition.
	\end{itemize}
	It is important to emphasize that items (i)--(iii) are obtained without assuming the convexity assumption \eqref{hip. convex}.
	
	As mentioned in item (v), we apply the theory developed to study the existence of weak solutions to the following class of nonlocal problems
	\begin{equation}\label{P0}
		\left\{
		\begin{array}{ll}
			(-\Delta)_{\Phi_{x,y}}^s u = f(x,u),& \mbox{in }\Omega,\\
			u=0,& \mbox{on }\mathbb{R}^N\setminus \Omega,
		\end{array}
		\right.
	\end{equation}
	where $N\geq 2$, $\Omega\subset \mathbb{R}^N$ is a bounded domain with Lipschitz boundary $\partial \Omega$, $(-\Delta)_{\Phi_{x,y}}^s$ is the nonlocal integro-differential operator of elliptic type defined in \eqref{phi-frac} and $f:\Omega \times \mathbb{R} \rightarrow \mathbb{R}$ is a Carath\'{e}odory function satisfying some suitable growth conditions. It is worth noting that Problem \eqref{P0} enables the presence of many particular operators, such as, the fractional operator with variable exponent, double-phase and double-phase with variable exponent operators, among others. For instance, we consider the following examples:

	\begin{itemize}
		\item \textit{Fractional Double-Phase operator}: $\Phi_{x,y}(t)=\frac{1}{p}|t|^p+\frac{1}{q}a(x,y)|t|^q$ with $1<p<q<N$ and $a\in L^\infty(\Omega\times \Omega)$ a non-negative symmetric function;
	 
	 	\item \textit{Fractional $p(x,\cdot)$-Laplacian}: $\Phi_{x,y}(t)=\frac{1}{p(x,y)}|t|^{p(x,y)}$ with $p\in C(\Omega\times\Omega)$ symmetric and satisfying $1<p^{-}\leq p(x,y)\leq p^{+}<N$, for all $(x,y)\in \Omega\times\Omega$;
	 
		 \item  \textit{Logarithmic perturbation of the $p(x,\cdot)$-Laplacian}: $\Phi_{x,y}(t)=|t|^{p(x,y)}\log(1+|t|)$ with $p\in C(\Omega\times\Omega)$ symmetric and satisfying $1<p^{-}\leq p(x,y)\leq p^{+}<N-1,\; \mbox{for all}\; (x,y)\in \Omega\times\Omega$;
		 
		 \item \textit{Anisotropic fractional $p$-Laplacian}: $\Phi_{x,y}(t)=a(x,y)|t|^p$, with $1<p<+\infty$ and $a\in L^\infty(\Omega\times \Omega)$ symmetric and satisfying $0<a^{-}\leq a(x,y)\leq a^{+}<\infty$.
	\end{itemize}
	

	\vspace{0,5cm}
	
	The  paper is organized as follows. In the forthcoming section, we collect some preliminary results and we establish some notation that will be used throughout this paper. Section \ref{section 3} is devoted to study monotonicity results for operators in fractional Musielak-Sobolev spaces. In the Section \ref{Existence result}, we apply variational methods to obtain weak solution for problem \eqref{P0}. Finally, in Section \ref{examples}, we present some examples of functions $\Phi$ and nonlinearities $f$ for which the existence result may be applied.
	
	
	
	\section{Preliminaries} \label{s0}	
	
	\subsection{Generalized $N$-functions}\label{ss21}
		
	In this Section we collect preliminary concepts of the theory of Musielak-Orlicz spaces and fractional Musielak-Sobolev spaces, which will be used throughout the paper. For a more complete discussion on this subject we refer the readers to \cite{adams, Azroul1, Azroul2, Harjulehto1, Harjulehto2, Fuk_1, Musielak}.  In this work, unless we specifically point it out, we consider $N\geq 2$, $\Omega$ is a bounded domain in $\mathbb{R}^N$ and $\Phi:\Omega \times \Omega \times\mathbb{R}\rightarrow\mathbb{R}$ is a Carath\'{e}odory function defined by
	$$
	\Phi_{x,y}(t):=\Phi(x,y,t)=\int_0^{|t|} \tau\varphi(x,y,\tau)\;\mathrm{d}\tau,
	$$
	where  $\varphi:\Omega \times \Omega \times(0,+\infty)\rightarrow[0,+\infty)$ is a function satisfying the following assumptions:
	\begin{itemize}
		\item[($\varphi_1$)] $\lim_{t\rightarrow 0} t\varphi_{x,y}(t)=0$ and  $\lim_{t\rightarrow +\infty} t\varphi_{x,y}(t)=+\infty$, where $t \mapsto\varphi_{x,y}(t):=\varphi(x,y,t)$ is a continuous function on $(0,+\infty)$, for all $(x,y) \in \Omega\times \Omega$; 
		\item[($\varphi_2$)] $t\mapsto t\varphi_{x,y}(t)$ is increasing on $(0, +\infty)$;
		\item[($\varphi_3$)] there exist $1<\ell\leq m < +\infty$ such that
		$$
		\ell\leq \frac {t^2\varphi_{x,y}(t)}{\Phi_{x,y}(t)}\leq m, \quad \mbox{for all} \hspace{0,2cm} (x,y)\in \Omega \times\Omega \hspace{0,2cm}  \mbox{and} \hspace{0,2cm} t>0.
		$$
	\end{itemize}
	We also consider the function $\widehat{\Phi}:{\Omega}\times\mathbb{R} \rightarrow\mathbb{R}$ given by 
	\begin{equation}\label{phihat}
	\widehat{\Phi}_x(t):=\widehat{\Phi}(x,t)=\int_0^{|t|} \tau \widehat{\varphi}_x(\tau)\;\mathrm{d} \tau,
	\end{equation}
	where $\widehat{\varphi}_x(t):=\widehat{\varphi}(x,t)=\varphi(x,x,t)$ for all $(x,t)\in \Omega \times (0,+\infty)$. The assumption $(\varphi_3)$ implies that 
	$$
	\ell\leq \frac{t^2 \widehat{\varphi}_x(t)}{\widehat{\Phi}_x(t)}\leq m, \quad \mbox{for all} \hspace{0,2cm} x\in \Omega \hspace{0,2cm}  \mbox{and} \hspace{0,2cm} t>0.
	$$
	
	\begin{remark}
		In contrast to the assumption $(\varphi_3)$, if the convexity hypothesis \eqref{hip. convex} holds, then
		$$\Phi_{x,y}(s)\geq \Phi_{x,y}(t) + \frac{\varphi_{x,y}(t)}{2}(s^2 -t^2 ),\quad \mbox{for all} \hspace{0,2cm} (x,y)\in \Omega \times\Omega \hspace{0,2cm}  \mbox{and} \hspace{0,2cm} s,t>0.$$
		By fixing $t>0$ and making $s\rightarrow 0^+$, we get
		$$2\leq\frac{t^2\varphi_{x,y}(t)}{\Phi_{x,y}(t)},\quad \mbox{for all} \hspace{0,2cm} (x,y)\in \Omega \times\Omega \hspace{0,2cm}  \mbox{and} \hspace{0,2cm} t>0,$$
		which excludes cases where $1<\ell<2$.
	\end{remark}
	
	
	As a consequence of the previous assumptions, one may obtain the following properties:
		\begin{itemize}
			\item[(i)] 
			$\Phi_{x,y}(t)$ is even, continuous, increasing and convex in $t$;
			\item[(ii)] $\displaystyle\lim_{t\rightarrow 0}\frac{\Phi_{x,y}(t)}{t}=0$;
			\item[(iii)] $\displaystyle\lim_{t\rightarrow \infty}\frac{\Phi_{x,y}(t)}{t}=\infty$;
			\item[(iv)] $\Phi_{x,y}(t)>0$, for all $t>0$.	
		\end{itemize}	
	See for instance the book of Kufner-John-Fu\v{c}\'{i}k \cite[Lemma 3.2.2]{Kufner-John-Fucik}.
	\begin{definition}
	A function $\Phi\colon \Omega\times \Omega\times \mathbb{R} \to \mathbb{R}$ is said to be a \emph{generalized $N$-function} if it fulfills the properties (i)--(iv) above for a.e. $(x,y)\in\Omega\times\Omega$, and for each $t\in\mathbb{R}$,  $\Phi_{x,y}(t)$ is measurable in $(x,y)$. We denote by $\mathcal{N}(\Omega\times\Omega)$ the set of all generalized $N$-functions defined on $\Omega\times\Omega$.
	\end{definition}

In light of assumption ($\varphi_3$),   $\Phi_{x,y}$ and $\widehat{\Phi}_x$ satisfy the so-called \emph{$\Delta_{2}$-condition:} there exists $K>0$ such that
\begin{align*}
&\Phi_{x,y}(2t) \leq K \Phi_{x,y}(t), \quad \mbox{for all} \hspace{0,2cm} (x,y)\in \Omega\times\Omega \hspace{0,2cm}  \mbox{and} \hspace{0,2cm} t>0,\\
&\widehat \Phi_{x}(2t) \leq K \widehat\Phi_{x}(t), \quad \mbox{for all} \hspace{0,2cm} x\in \Omega \hspace{0,2cm}  \mbox{and} \hspace{0,2cm} t>0.
\end{align*}
See \cite[Proposition 2.3]{Mihailescu and Radulescu}.
	
	
%

%
%

	\begin{definition}
		For $\Phi \in \mathcal{N}(\Omega \times \Omega)$, the function $\widetilde{\Phi}: \Omega \times \Omega \times \mathbb{R} \rightarrow [0, +\infty)$  defined by
		\begin{equation}\label{j1}
			\widetilde{\Phi}_{x,y}(t)=\widetilde{\Phi}(x,y,t):=\sup_{s\geq0} (ts - \Phi_{x,y}(s)), \quad \mbox{for all} \hspace{0,2cm} (x,y)\in \Omega \times \Omega \hspace{0,2cm} \mbox{and}\hspace{0,2cm} t\in \mathbb{R}
		\end{equation}
		is called the conjugate of $\Phi$ in the sense of Young. 
	\end{definition}
	It is not hard to see that $(\varphi_1)-(\varphi_3)$ imply that $\widetilde{\Phi}$ is a generalized $N$-function and satisfies the $\Delta_{2}$-condition. Furthermore, in view of \eqref{j1} one may deduce the Young’s type inequality 
	\begin{equation}\label{Young's type inequality}
		st\leq \Phi_{x,y}(s) +\widetilde{\Phi}_{x,y}(t), \quad \mbox{for all} \hspace{0,2cm} (x,y)\in\Omega\times\Omega \hspace{0,2cm} \mbox{and} \hspace{0,2cm} s,t\geq0.
	\end{equation}
	Arguing as in \cite[Lemma 2.1]{Azroul1}, one can prove that $\Phi$ and $\widetilde{\Phi}$ satisfy the following
	inequality
	\begin{equation}\label{est-conjugate}
		\widetilde{\Phi}_{x,y}(t\varphi_{x,y}(t))\leq \Phi_{x,y}(2t), \quad \mbox{for all} \hspace{0,2cm} (x,y)\in \Omega\times\Omega \hspace{0,2cm} \mbox{and}\hspace{0,2cm} t\geq0.
	\end{equation}

	
	\subsection{Musielak-Orlicz spaces}
	
	In correspondence to $\widehat{\Phi}_x$, we recall the Musielak-Orlicz space
		$$
		L_{\widehat{\Phi}_x}(\Omega)=\left\{u:\Omega \rightarrow \mathbb{R}~\mbox{measurable}: \int_\Omega \widehat{\Phi}_x(\lambda|u|)\, \mathrm{d}x < \infty, ~ \mbox{for some}~ \lambda>0\right\}.
		$$
		Under our assumptions, $L_{\widehat{\Phi}_x}(\Omega)$ is a separable and reflexive Banach space when endowed with the Luxemburg norm
		$$
		\|u\|_{\widehat{\Phi}_x}:=\inf\left\{\lambda>0:\int_\Omega \widehat{\Phi}_x\left(\frac{u}{\lambda}\right)\,\mathrm{d}x\leq 1\right\}.
		$$ 
		Using the Young’s type inequality \eqref{Young's type inequality} for $\widehat{\Phi}$ and $\widetilde{\widehat{\Phi}}$, one may deduce the following H\"{o}lder's type inequality 
		$$\left| \int_{\Omega} u v \; \mathrm{d}x \right| \leq 2\|u\|_{\widehat{\Phi}_x} \|v\|_{\widetilde{\widehat{\Phi}}_x},$$
		for all $u\in L_{\widehat{\Phi}_x}(\Omega)$ and $v \in L_{\widetilde{\widehat{\Phi}}_x}(\Omega)$, see \cite[Lemma 2.6.5]{Harjulehto1}. For more details on Musielak-Orlicz space, see Diening et al. \cite{Harjulehto1} and Musielak \cite{Musielak}.
		
		Now, we introduce the modular functions $J_{\widehat{\Phi}_x}:L_{\widehat{\Phi}_x}(\Omega)\rightarrow \mathbb{R}$ and $J_{\widetilde{\widehat{\Phi}}_x}: L_{\widetilde{\widehat{\Phi}}_x}(\Omega)\rightarrow \mathbb{R}$ defined by
		$$J_{\widehat{\Phi}_x}(u)=\int_{\Omega} \widehat{\Phi}_x(|u(x)|)\,\mathrm{d}x\quad \mbox{and}\quad J_{\widetilde{\widehat{\Phi}}_x}(u)=\int_{\Omega}\widetilde{ \widehat{\Phi}}_x(|u(x)|)\,\mathrm{d}x .$$
	Henceforth, we use the following notation:
	 $$
	 \begin{aligned}
	 	&\xi_0^-(t)=\min\{t^\ell,t^m\}, \quad \xi_0^+(t)=\max\{t^\ell,t^m\},\\
	 	&\xi_1^{-}(t)=\min\{t^{\widetilde{\ell}},t^{\widetilde{m}}\}, \quad \xi_1^{+}(t)=\max\{ t^{\widetilde{\ell}}, t^{\widetilde{m}}\} ,~~ t\geq 0,
	 \end{aligned}
	 $$
	 where $\widetilde{\ell}=\frac{\ell}{\ell -1}$ and $\widetilde{m}=\frac{m}{m -1}$. Finally, we recall the following Lemma due to Fukagai et al. \cite[Lemmas 2.1 and 2.5]{Fuk_1}:
	\begin{lemma}\label{M-N1}
		Assume that $(\varphi_1)-(\varphi_3)$ hold. Then, $\widehat{\Phi}$ and $\widetilde{\widehat{\Phi}}$ satisfy the following estimates:
		\begin{itemize}
			\item [(i)] $\xi_0^-(\sigma)\widehat{\Phi}_x(t)\leq\widehat{\Phi}_x(\sigma t)\leq \xi_0^+(\sigma)\widehat{\Phi}_x(t)$, for all $x \in \Omega$ and $\sigma, t\geq 0$; 
			
			\item[(ii)] $\xi_0^-(\|u\|_{\widehat{\Phi}_x})\leq J_{\widehat{\Phi}_x}(u)\leq \xi_0^+(\|u\|_{\widehat{\Phi}_x})$, for all $u\in L_{\widehat{\Phi}_x}(\Omega);$
			
			\item[(iii)] $\xi_1^-(\sigma)\widetilde{\widehat{\Phi}}_x(t)\leq\widetilde{\widehat{\Phi}}_x(\sigma t)\leq \xi_1^+(\sigma)\widetilde{\widehat{\Phi}}_x(t)$, for all $x\in \Omega$ and $\sigma, t\geq 0$; 
			
			\item[(iv)] $\xi_1^-(\|u\|_{\widetilde{\widehat{\Phi}}_x})\leq J_{\widetilde{\widehat{\Phi}}_x}(u)\leq \xi_1^+(\|u\|_{\widetilde{\widehat{\Phi}}_x})$, for all $u\in L_{\widetilde{\widehat{\Phi}}_x}(\Omega).$
		\end{itemize}
	\end{lemma}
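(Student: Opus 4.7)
The plan is to prove (i) by a logarithmic-derivative argument, deduce (ii) from (i) via the defining property of the Luxemburg norm, and then run the same two-step machine for (iii)--(iv) after transferring the growth bound $(\varphi_3)$ from $\widehat\Phi_x$ to its Young conjugate $\widetilde{\widehat\Phi}_x$.

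For (i), fix $x\in\Omega$ and $t>0$, and set $g(\sigma):=\widehat\Phi_x(\sigma t)$ for $\sigma>0$. Since $\widehat\Phi_x'(s)=s\widehat\varphi_x(s)$ by the very definition of $\widehat\Phi_x$ in \eqref{phihat}, a direct computation gives
\[
\frac{\sigma\, g'(\sigma)}{g(\sigma)}=\frac{(\sigma t)^2\,\widehat\varphi_x(\sigma t)}{\widehat\Phi_x(\sigma t)}\in[\ell,m]
\]
by $(\varphi_3)$. Integrating the inequality $\ell\le \frac{d}{d\tau}\log g(\tau)\cdot\tau\le m$ with respect to $d\tau/\tau$ from $1$ to $\sigma$ when $\sigma\ge 1$ yields $\sigma^\ell g(1)\le g(\sigma)\le \sigma^m g(1)$; for $\sigma\in(0,1)$, integrating in the reverse direction reverses the two inequalities. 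The two cases together recover exactly $\xi_0^-(\sigma)\widehat\Phi_x(t)\le \widehat\Phi_x(\sigma t)\le \xi_0^+(\sigma)\widehat\Phi_x(t)$.

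For (ii), assume $u\neq 0$ and set $\lambda:=\|u\|_{\widehat\Phi_x}$. The $\Delta_2$-condition ensures that the modular is continuous in the scaling parameter, so that the infimum in the Luxemburg norm is attained, i.e. $\int_\Omega \widehat\Phi_x(|u|/\lambda)\,\mathrm{d}x=1$. Writing $J_{\widehat\Phi_x}(u)=\int_\Omega \widehat\Phi_x(\lambda\cdot |u|/\lambda)\,\mathrm{d}x$ and applying (i) pointwise with $\sigma=\lambda$ and $t=|u(x)|/\lambda$ gives $\xi_0^-(\lambda)\le J_{\widehat\Phi_x}(u)\le \xi_0^+(\lambda)$, which is exactly (ii).

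For (iii)--(iv), the substantive step is to prove an analogue of $(\varphi_3)$ for $\widetilde{\widehat\Phi}_x$ with exponents $\widetilde\ell,\widetilde m$. Using $(\varphi_2)$ to invert $s\mapsto s\widehat\varphi_x(s)=\widehat\Phi_x'(s)$ and the Young equality $\widehat\Phi_x(s)+\widetilde{\widehat\Phi}_x(\widehat\Phi_x'(s))=s\,\widehat\Phi_x'(s)$, the substitution $t=\widehat\Phi_x'(s)$ produces
\[
\frac{t\,\widetilde{\widehat\Phi}{}_x'(t)}{\widetilde{\widehat\Phi}_x(t)}=\frac{s\,\widehat\Phi_x'(s)}{s\,\widehat\Phi_x'(s)-\widehat\Phi_x(s)}=\frac{r}{r-1},\qquad r:=\frac{s\,\widehat\Phi_x'(s)}{\widehat\Phi_x(s)}\in[\ell,m].
\]
Since $r\mapsto r/(r-1)$ is decreasing on $(1,\infty)$, we obtain $\widetilde m\le t\,\widetilde{\widehat\Phi}{}_x'(t)/\widetilde{\widehat\Phi}_x(t)\le \widetilde\ell$. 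Feeding this into the same logarithmic-derivative argument used for (i) delivers (iii) (with the order of the exponents irrelevant, since $\xi_1^\pm$ is defined as a min/max), and then repeating the Luxemburg-norm step used for (ii) produces (iv).

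The main obstacle is the conjugate computation for (iii): one must use $(\varphi_2)$ to guarantee the invertibility of $\widehat\Phi_x'$, together with the Young equality at the point of tangency, in order to pass from the growth condition on $\widehat\Phi_x$ to the dual exponents $\widetilde\ell,\widetilde m$. Once this translation is in place the rest is a direct repetition of the argument for $\widehat\Phi_x$.
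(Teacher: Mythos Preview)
The paper does not actually prove this lemma; it merely cites it as \cite[Lemmas 2.1 and 2.5]{Fuk_1} (Fukagai--Ito--Narukawa). Your argument is correct and is essentially the standard proof one finds in that reference: the logarithmic-derivative integration for (i) and (iii), the unit-ball identity for the Luxemburg norm in (ii) and (iv), and the Young-equality/inverse-derivative trick to pass from $(\varphi_3)$ to the dual exponents $\widetilde m\le \widetilde\ell$ for the conjugate.

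One small caveat: your invertibility step for $\widehat\Phi_x'(s)=s\widehat\varphi_x(s)$ appeals to $(\varphi_2)$, but the paper later introduces a separate hypothesis $(\varphi_2)'$ for \emph{strict} monotonicity, so under $(\varphi_2)$ alone $\widehat\Phi_x'$ may be merely nondecreasing. This is harmless here---one uses the (right-continuous) generalized inverse and the Young relation $\widetilde{\widehat\Phi}_x{}'(t)=(\widehat\Phi_x')^{-1}(t)$ a.e., which still yields the same bound $\widetilde m\le t\,\widetilde{\widehat\Phi}_x{}'(t)/\widetilde{\widehat\Phi}_x(t)\le \widetilde\ell$---but it is worth saying explicitly rather than assuming a genuine inverse exists.
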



	\subsection{Fractional Musielak-Sobolev spaces}
	Let $\Phi \in \mathcal{N}(\Omega\times \Omega)$ and $s\in (0,1)$. The fractional Musielak-Sobolev space is defined as follows
	$$
	W^{s,\Phi_{x,y}}(\Omega)=\left\{u\in L_{\widehat{\Phi}_x}(\Omega): J_{s,\Phi}(\lambda u)<\infty, ~ \mbox{for some}~ \lambda>0\right\},
	$$ 
	where 
	$$
	J_{s,\Phi}(u):=\int_{\Omega}\int_{\Omega} \Phi_{x,y}\left( D_s u(x,y)\right)d\mu,\quad \mbox{for}~ s\in (0,1),
	$$
	with
	$$
	\mathrm{d}\mu:= \frac{\mathrm{d}x\mathrm{d}y}{|x-y|^{N}}\quad \mbox{and} \quad D_su(x,y):=\frac{u(x)-u(y)}{|x-y|^s}.
	$$ 
	It is well known that $\mathrm{d}\mu$ is a regular Borel measure on the set $\Omega\times \Omega$.

	The space $W^{s,\Phi_{x,y}}(\Omega)$ is endowed with the norm
	\[
	\|u\|_{W^{s,\Phi_{x,y}}(\Omega)}:=\|u\|_{\widehat{\Phi}_x}+[u]_{s,\Phi_{x,y}},
	\]  
	where the term $[ \cdot ]_{s,\Phi_{x,y}}$ is the so called $(s,\Phi_{x,y})$-\textit{Gagliardo seminorm} defined by
	\[
	[u]_{s,\Phi_{x,y}}:=\inf\left\{\lambda>0:J_{s,\Phi}\left(\frac{u}{\lambda}\right)\leq 1\right\}.
	\]
	It is important to emphasize that assumption $(\varphi_3)$ implies that $\Phi$ and $\widetilde{\Phi}$ satisfy the $\Delta_2$-condition. For this reason, $W^{s,\Phi_{x,y}}(\Omega)$ is a reflexive and separable Banach space, see \cite[Theorem 2.1]{Azroul1}. Note that for the case $\Phi_{x,y}(t)=\Phi(t)$, i.e., when $\Phi$ is independent of the variables $x$ and $y$, we have that $L_{\Phi}(\Omega)$ and $W^{s,\Phi}(\Omega)$ are Orlicz spaces and fractional Orlicz-Sobolev spaces respectively, see \cite{Bonder}. 
	
	We denote by $\left\langle\cdot, \cdot \right\rangle$ the duality pairing between $W^{s,\Phi_{x,y}}(\Omega)$ and its dual
	space $\big(W^{s,\Phi_{x,y}}(\Omega)\big)^\ast$.


	\begin{definition}
		We say that $\Phi \in \mathcal{N}(\Omega\times \Omega)$ satisfies the fractional boundedness condition if
		\begin{equation}\label{j2}\tag{$\mathcal{B}_f$}
			0 <C_1\leq\Phi_{x,y}(1)\leq C_2, \quad \mbox{for all}\hspace{0.2cm} (x,y) \in \Omega \times \Omega,
		\end{equation}
	for some constants $C_1$ and $C_2$.
	\end{definition}

Due to condition \eqref{j2}, for any $s\in (0,1)$ it holds that $C_0^2(\Omega)\subset W^{s,\Phi_{x,y}}(\Omega)$, see \cite[Theorem 2.2]{Azroul1}.

	
	A fractional version of Lemma \ref{M-N1} can be stated as follows:
	
	\begin{lemma}(\cite[Lemma 2.2 and Proposition 2.3 ]{Azroul1})\label{M-N2}
		Assume that $(\varphi_1)-(\varphi_3)$ hold and let $s\in(0,1)$. Then, the following assertions hold:
		\begin{itemize}
			\item [(i)] $\xi_0^-(\sigma)\Phi_{x,y}(t)\leq \Phi_{x,y}(\sigma t)\leq \xi_0^+(\sigma)\Phi_{x,y}(t)$, for all $(x,y) \in \Omega\times \Omega$ and $\sigma, t\geq 0$;
			\item[(ii)] $\xi_0^-([u]_{s,\Phi})\leq J_{s,\Phi}(u)\leq \xi_0^+([u]_{s,\Phi})$, for all $u\in W^{s,\Phi_{x,y}}(\Omega)$.
		\end{itemize}
	\end{lemma}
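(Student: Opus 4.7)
\textbf{Proof proposal for Lemma \ref{M-N2}.}

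The plan is to prove part (i) as a pointwise inequality for the generalized $N$-function $\Phi_{x,y}$ using a logarithmic-derivative trick based on assumption $(\varphi_3)$, and then obtain (ii) by applying (i) pointwise inside the modular and exploiting the fact that $J_{s,\Phi}$ equals $1$ when evaluated at the renormalization $u/[u]_{s,\Phi}$.

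For (i), fix $(x,y)\in\Omega\times\Omega$ and $t>0$, and define $g(\sigma):=\Phi_{x,y}(\sigma t)$ for $\sigma>0$. Since $\Phi_{x,y}(s)=\int_0^{|s|}\tau\varphi_{x,y}(\tau)\,\mathrm{d}\tau$, we have $\Phi_{x,y}'(s)=s\,\varphi_{x,y}(s)$ for $s>0$, and therefore $g'(\sigma)=\sigma t^2\varphi_{x,y}(\sigma t)$. Assumption $(\varphi_3)$ rewrites as
\[
\ell\,\Phi_{x,y}(\sigma t)\leq (\sigma t)^2\varphi_{x,y}(\sigma t)\leq m\,\Phi_{x,y}(\sigma t),
\]
so $\frac{\ell}{\sigma}\leq \frac{g'(\sigma)}{g(\sigma)}\leq \frac{m}{\sigma}$, i.e.\ $\frac{\ell}{\sigma}\leq \frac{\mathrm{d}}{\mathrm{d}\sigma}\log g(\sigma)\leq \frac{m}{\sigma}$. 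Integrating from $1$ to $\sigma$ when $\sigma\geq 1$ and from $\sigma$ to $1$ when $0<\sigma<1$ produces
\[
\min\{\sigma^{\ell},\sigma^{m}\}\,\Phi_{x,y}(t)\leq \Phi_{x,y}(\sigma t)\leq \max\{\sigma^{\ell},\sigma^{m}\}\,\Phi_{x,y}(t),
\]
which is precisely (i). The degenerate cases $\sigma=0$ or $t=0$ are trivial since $\Phi_{x,y}(0)=0$ and $\xi_0^{\pm}(0)=0$.

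For (ii), assume first $u\neq 0$ and set $\lambda:=[u]_{s,\Phi}>0$. Applying (i) pointwise with $\sigma=\lambda$ and $t=D_s(u/\lambda)(x,y)$ yields
\[
\xi_0^{-}(\lambda)\,\Phi_{x,y}\!\left(D_s(u/\lambda)(x,y)\right)\leq \Phi_{x,y}\!\left(D_s u(x,y)\right)\leq \xi_0^{+}(\lambda)\,\Phi_{x,y}\!\left(D_s(u/\lambda)(x,y)\right),
\]
and integrating against $\mathrm{d}\mu$ gives $\xi_0^{-}(\lambda)\,J_{s,\Phi}(u/\lambda)\leq J_{s,\Phi}(u)\leq \xi_0^{+}(\lambda)\,J_{s,\Phi}(u/\lambda)$. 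It remains to verify that $J_{s,\Phi}(u/\lambda)=1$.

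The step I expect to be the main technical point is exactly this identity. The $\Delta_2$-condition on $\Phi$ implies that the map $\mu\mapsto J_{s,\Phi}(u/\mu)$ is continuous on $(0,\infty)$: a uniform bound of the form $\Phi_{x,y}(\mu^{-1}s)\leq C(\mu)\Phi_{x,y}(\mu_0^{-1}s)$ for $\mu$ near $\mu_0$ enables dominated convergence. Now by the definition of the infimum, there exists a sequence $\lambda_n\downarrow\lambda$ with $J_{s,\Phi}(u/\lambda_n)\leq 1$; continuity yields $J_{s,\Phi}(u/\lambda)\leq 1$. If the inequality were strict, continuity would give some $\lambda'<\lambda$ with $J_{s,\Phi}(u/\lambda')\leq 1$, contradicting that $\lambda$ is the infimum. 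Hence $J_{s,\Phi}(u/\lambda)=1$, and (ii) follows. The case $u=0$ is immediate since both sides vanish.
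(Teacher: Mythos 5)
The paper does not give its own proof of Lemma \ref{M-N2}; it imports it directly from \cite[Lemma 2.2 and Proposition 2.3]{Azroul1}, which in turn follows the classical Fukagai--Ito--Narukawa scheme (Lemma~\ref{M-N1}). Your argument is correct and is in fact the standard one underlying those references. Part (i) is handled cleanly by the logarithmic-derivative estimate $\ell/\sigma\le \frac{d}{d\sigma}\log\Phi_{x,y}(\sigma t)\le m/\sigma$, which is exactly the $(\varphi_3)$ rewriting. For part (ii), the only point that deserves explicit attention is the identity $J_{s,\Phi}(u/[u]_{s,\Phi})=1$ for $u\ne 0$: you correctly identify that this requires the modular to be finite and continuous along rays, and you correctly observe that the $\Delta_2$-condition (equivalently, part (i), since $\Phi_{x,y}(\sigma t)\le\xi_0^+(\sigma)\Phi_{x,y}(t)$ furnishes a uniform dominating function for $\sigma$ in a neighbourhood) supplies both. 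You might make one small point more explicit: the finiteness of $J_{s,\Phi}(u/\mu)$ for \emph{all} $\mu>0$ is not part of the definition of $W^{s,\Phi_{x,y}}(\Omega)$ (which only asks it for some $\lambda$), but follows from part (i) applied with the admissible scaling; this is what makes the dominated-convergence argument legitimate. With that noted, the proof is complete and matches the approach the cited sources use.
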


In particular, this lemma together with \eqref{j2} gives that
\begin{align*}
&\Phi_{x,y}(t) \leq \xi^+_0(t) \Phi_{x,y}(1) \leq \xi^+_0(t) \sup_{x,y} \Phi_{x,y}(1)<\infty,\\
&\Phi_{x,y}(t) \geq \xi^-_0(t) \Phi_{x,y}(1) \geq  \xi^-_0(t) \inf_{x,y} \Phi_{x,y}(1)>0,~t>0.
\end{align*}

	Let $s \in (0, 1)$ and $\widehat{\Phi}_{x}$ be defined in \eqref{phihat}. For each $x \in \Omega$, the function $\widehat{\Phi}_x:[0,+\infty) \rightarrow [0, +\infty)$ is an increasing homeomorphism. Throughout  this work, we denote by $\widehat{\Phi}_x^{-1}$ the inverse function of $\widehat{\Phi}_x$ and we assume the following conditions:
	\begin{eqnarray}\label{cond-crit-1}
		\int_0^1\frac{\widehat{\Phi}_x^{-1}(\tau)}{\tau^{\frac{N+s}{N}}}\,\mathrm{d}\tau<\infty \quad \mbox{and} \quad\int_1^\infty\frac{\widehat{\Phi}_x^{-1}(\tau)}{\tau^{\frac{N+s}{N}}}\,\mathrm{d}\tau=\infty, \quad \mbox{for all}\; x\in \Omega.
	\end{eqnarray}
	We define the inverse Musielak-Sobolev conjugate function of $\widehat{\Phi}$, denoted by $\widehat{\Phi}_{s,x}^\ast$, as follows:
	$$
	 (\widehat{\Phi}_{s,x}^\ast)^{-1}(t):=(\widehat{\Phi}_s^\ast)^{-1}(x,t)= \int_0^t\frac{\widehat{\Phi}_{x}^{-1}(\tau)}{\tau^{\frac{N+s}{N}}}\,\mathrm{d}\tau, \quad \mbox{for } x\in\Omega \hspace{0,2cm} \mbox{and} \hspace{0,2cm} t\geq0.
	$$
	Let us introduce the notation
	 \[
	 \xi_2^{-}(t)=\min\{t^{\ell_s^\ast}, t^{m_s^\ast}\} \quad \mbox{and} \quad \xi_2^{+}(t)=\max\{t^{\ell_s^\ast}, t^{m_s^\ast}\}, \quad \mbox{for} \hspace{0,2cm} t\geq0,
	 \]
	where $\ell_s^\ast=\frac{N\ell}{N-s\ell}$ and $m_s^\ast=\frac{Nm}{N-sm}$  whenever $\ell, m \in (1,N/s)$. Proceeding as in \cite[Lemma 2.2]{Fuk_1}, we obtain the following result:
	\begin{lemma}\label{M-N critical}
		Assume that $(\varphi_1)-(\varphi_3)$ hold with $\ell, m \in (1,N/s)$ and let $s\in(0,1)$. The following assertions hold:
		\begin{itemize}
			\item[(i)] $\xi_2^{-}(\sigma)\widehat{\Phi}_{s,x}^\ast(t)\leq \widehat{\Phi}_{s,x}^\ast(\sigma t)\leq\xi_2^{+}(\sigma)\widehat{\Phi}_{s,x}^\ast(t)$, for all $x\in \Omega$ and $\sigma,t\geq0$;
			
			\item[(ii)] $\xi_2^{-}(\|u\|_{\widehat{\Phi}_{s,x}^\ast})\leq \displaystyle\int_{\Omega}\widehat{\Phi}_{s,x}^\ast (|u(x)|)\;\mathrm{d}x\leq\xi_2^{+}(\|u\|_{\widehat{\Phi}_{s,x}^\ast})$, for all $u\in L_{\widehat{\Phi}_{s,x}^\ast}(\Omega)$.
		\end{itemize}
	\end{lemma}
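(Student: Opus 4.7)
The plan is to reduce (i) to a differential inequality for the exponent ratio of $\widehat{\Phi}_{s,x}^\ast$, verify that inequality through the definition of $(\widehat{\Phi}_{s,x}^\ast)^{-1}$, and then derive (ii) from (i) by the standard Luxemburg-norm argument.

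For the first step, I would fix $x\in\Omega$ and abbreviate $\Psi_x:=\widehat{\Phi}_{s,x}^\ast$, $G:=\Psi_x^{-1}$ and $\phi:=\widehat{\Phi}_x^{-1}$, so that $G'(\tau)=\phi(\tau)\tau^{-(N+s)/N}$. The conclusion of (i) is equivalent to saying that $t\mapsto \Psi_x(t)/t^{\ell_s^\ast}$ is non-decreasing and $t\mapsto \Psi_x(t)/t^{m_s^\ast}$ is non-increasing, i.e.\ that $\ell_s^\ast\le t\Psi_x'(t)/\Psi_x(t)\le m_s^\ast$; passing to the inverse $G$, this amounts to the bound
\[
\ell_s^\ast \le \frac{G(\tau)\,\tau^{s/N}}{\phi(\tau)} \le m_s^\ast, \qquad \tau>0.
\]

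Next, I would feed in the monotonicity of $\widehat{\Phi}_x$ coming from Lemma \ref{M-N1}(i). Setting $\sigma=a/b$ in the inequality $\xi_0^-(\sigma)\widehat{\Phi}_x(b)\le \widehat{\Phi}_x(\sigma b)\le \xi_0^+(\sigma)\widehat{\Phi}_x(b)$ shows that $\widehat{\Phi}_x(t)/t^\ell$ is non-decreasing and $\widehat{\Phi}_x(t)/t^m$ is non-increasing, which, upon inverting, translates into
\[
(\rho/\tau)^{1/\ell}\phi(\tau)\le \phi(\rho)\le (\rho/\tau)^{1/m}\phi(\tau), \qquad 0<\rho\le \tau.
\]
Plugging these bounds into $G(\tau)=\int_0^\tau \phi(\rho)\rho^{-(N+s)/N}\,\mathrm{d}\rho$ gives two elementary integrals of the form $\int_0^\tau \rho^{1/p-(N+s)/N}\,\mathrm{d}\rho$ for $p\in\{\ell,m\}$. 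These converge precisely because $\ell,m\in(1,N/s)$ (this is where the hypothesis on the range of $\ell$ and $m$ enters), and the evaluation yields a factor $1/(1/p-s/N)=p_s^\ast$ together with the power $\tau^{1/p_s^\ast-1/p}=\tau^{-s/N}$. The result is $\ell_s^\ast\phi(\tau)\tau^{-s/N}\le G(\tau)\le m_s^\ast\phi(\tau)\tau^{-s/N}$, which is exactly the displayed inequality and proves (i) after integrating the differential inequality (or equivalently observing directly the stated monotonicity of $\Psi_x(t)/t^{\ell_s^\ast}$ and $\Psi_x(t)/t^{m_s^\ast}$).

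Once (i) is in hand, (ii) follows verbatim as in Lemma \ref{M-N1}(ii): setting $\lambda:=\|u\|_{\widehat{\Phi}_{s,x}^\ast}>0$ and using the norm-modular duality $\int_\Omega \widehat{\Phi}_{s,x}^\ast(|u|/\lambda)\,\mathrm{d}x\le 1$ together with equality for nonzero $u$ by a standard limiting argument, apply (i) with $\sigma=\lambda$ and $t=|u(x)|/\lambda$ to pull the homogeneity factor $\xi_2^\pm(\lambda)$ out of the integral. Integrating over $\Omega$ and using $\int_\Omega \widehat{\Phi}_{s,x}^\ast(|u|/\lambda)\,\mathrm{d}x\le 1$ for one direction and $\int_\Omega \widehat{\Phi}_{s,x}^\ast(|u|/\lambda)\,\mathrm{d}x\ge 1$ (for $\lambda$ slightly smaller than the norm, then passing to the limit) for the other, one obtains the two-sided bound in terms of $\xi_2^\pm(\|u\|_{\widehat{\Phi}_{s,x}^\ast})$. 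The main obstacle is the first step: carefully translating the pointwise bounds on $\widehat{\Phi}_x$ into the integral defining $G$ and tracking the exponent arithmetic so that the factors $\ell_s^\ast$ and $m_s^\ast$ appear with the correct sign and that the integrals converge, which is where the restriction $\ell,m\in (1,N/s)$ is essential.
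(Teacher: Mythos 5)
Your proof is correct and follows essentially the same route the paper points to (the argument of Fukagai–Ito–Narukawa, Lemma 2.2, adapted to the generalized setting). Deriving the two-sided power bound for $\widehat{\Phi}_x^{-1}$ from Lemma \ref{M-N1}(i), substituting into the integral defining $(\widehat{\Phi}_{s,x}^\ast)^{-1}$ to produce the factors $\ell_s^\ast$ and $m_s^\ast$ via the elementary power integral (with convergence requiring $\ell,m<N/s$), and then reading off the elasticity bound $\ell_s^\ast\le t(\widehat{\Phi}_{s,x}^\ast)'(t)/\widehat{\Phi}_{s,x}^\ast(t)\le m_s^\ast$ is exactly the standard approach, and the passage from (i) to (ii) via the Luxemburg norm is the same as in Lemma \ref{M-N1}.
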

	
	Next, we present some embedding results of the  fractional Musielak-Sobolev spaces. 
	
	\begin{lemma}(\cite[Lemma 2.3]{Azroul2})\label{embedding-classical}
		Let $0<s'<s<1$ and $\Omega$ be a bounded domain in $\mathbb{R}^N$. Assume that $(\varphi_1)-(\varphi_3)$ and \eqref{j2} hold. Then, the embedding $W^{s,\Phi_{x,y}}(\Omega)\hookrightarrow W^{s',q}(\Omega)$ is continuous for all $q\in[1, \ell)$.
	\end{lemma}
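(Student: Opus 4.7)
The overall strategy is to bound separately the $L^q$-term and the $(s',q)$-Gagliardo seminorm appearing in $\|u\|_{W^{s',q}(\Omega)}$ by $\|u\|_{W^{s,\Phi_{x,y}}(\Omega)}$. In each case I would first reduce, via a Luxemburg-type normalization, to a modular estimate and then split the domain of integration according to whether the relevant quantity is large or small.

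For the zeroth-order term I would prove the continuous embedding $L_{\widehat{\Phi}_x}(\Omega)\hookrightarrow L^q(\Omega)$. By homogeneity it suffices to exhibit a constant $C$ such that $\|v\|_{L^q(\Omega)}\leq C$ whenever $J_{\widehat{\Phi}_x}(v)\leq 1$. Combining Lemma~\ref{M-N1}(i) (with $t=1$, $\sigma=|v|$) with \eqref{j2} yields $\widehat{\Phi}_x(|v|)\geq c\,\xi_0^-(|v|)$; splitting $\Omega$ into $\{|v|\geq 1\}$ and $\{|v|<1\}$ I would then use $|v|^q\leq |v|^\ell\leq c^{-1}\widehat{\Phi}_x(|v|)$ on the first set (since $q<\ell$) and the trivial bound $|v|^q\leq 1$ on the second (whose measure is finite, $\Omega$ being bounded).

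For the seminorm I would first rewrite
$$[u]_{s',q}^{\,q}=\int_{\Omega}\!\!\int_{\Omega}\bigl(D_s u(x,y)\bigr)^q\,|x-y|^{(s-s')q}\,d\mu,$$
using $|u(x)-u(y)|^q|x-y|^{-N-s'q}=(D_s u)^q|x-y|^{(s-s')q}|x-y|^{-N}$. By the scaling property of the Luxemburg gauge it is enough to prove that this integral is bounded by a constant whenever $J_{s,\Phi}(v)\leq 1$, since applying such an estimate to $v=u/[u]_{s,\Phi_{x,y}}$ yields $[u]_{s',q}\leq C\,[u]_{s,\Phi_{x,y}}$. Fixing such a $v$, I would decompose the double integral into $E_+=\{D_s v\geq 1\}$ and $E_-=\{D_s v<1\}$. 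On $E_+$, Lemma~\ref{M-N2}(i) combined with \eqref{j2} gives $\Phi_{x,y}(t)\geq c\,t^\ell$ for $t\geq 1$, so $(D_s v)^q\leq(D_s v)^\ell\leq c^{-1}\Phi_{x,y}(D_s v)$; together with $|x-y|^{(s-s')q}\leq(\mathrm{diam}\,\Omega)^{(s-s')q}$ this contribution is controlled by a multiple of $J_{s,\Phi}(v)\leq 1$. On $E_-$ the crude bound $(D_s v)^q\leq 1$ gives
$$\int_{E_-}(D_s v)^q|x-y|^{(s-s')q}\,d\mu\leq\int_{\Omega}\!\!\int_{\Omega}|x-y|^{(s-s')q-N}\,dx\,dy,$$
which is finite by polar coordinates since $(s-s')q>0$ and $\Omega$ is bounded.

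The main obstacle is precisely the estimate on $E_-$: near the diagonal $D_s v$ can be arbitrarily small, and for $q<\ell\leq m$ there is no pointwise bound of $(D_s v)^q$ by $\Phi_{x,y}(D_s v)$, since the latter may vanish as fast as $t^m$ at the origin. The trivial bound $(D_s v)^q\leq 1$ then succeeds only because the strict inequality $s'<s$ makes the weight $|x-y|^{(s-s')q-N}$ integrable on $\Omega\times\Omega$; this is exactly where both $q\in[1,\ell)$ and $s'<s$ play a decisive role. Once the two modular estimates are in hand, passing back between modulars and Luxemburg norms through Lemma~\ref{M-N1}(ii) and Lemma~\ref{M-N2}(ii) delivers the desired continuous embedding.
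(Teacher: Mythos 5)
Your argument is correct and complete. A small cosmetic point: you should consistently write $|D_s v|^q$ rather than $(D_s v)^q$, since $D_s v$ may be negative (you implicitly do this by defining $E_\pm$ via $D_s v \gtrless 1$, which should be $|D_s v| \gtrless 1$). Beyond that, the two modular estimates are exactly right: on the large sets the combination of Lemma~\ref{M-N1}(i)/Lemma~\ref{M-N2}(i), condition \eqref{j2}, and $q<\ell$ gives $t^q\le t^\ell\le C_1^{-1}\Phi_{x,y}(t)$ for $t\ge 1$; on the small sets the trivial bound plus finiteness of $|\Omega|$ (zeroth order) or the integrability of $|x-y|^{(s-s')q-N}$ thanks to $s'<s$ (seminorm) closes the argument, and the Luxemburg-gauge scaling (valid under the $\Delta_2$-condition, which you have via $(\varphi_3)$) upgrades the modular bounds to norm bounds. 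Note that in this paper the lemma is only cited from Azroul et al.\ and not proved, so there is no in-paper proof to compare against; your proof is a correct, self-contained argument along the standard lines, correctly identifying that $q<\ell$ controls the superlevel set and $s'<s$ controls the diagonal singularity.
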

	
	\begin{remark}\label{embed-comp-part}
		In light of Lemma \ref{embedding-classical} and the classical theory of fractional Sobolev spaces, if $\Omega \subset\mathbb{R}^N$ is a bounded domain with $C^{0,1}$-regularity, then the embedding $W^{s,\Phi_{x,y}}(\Omega)\hookrightarrow L^1(\Omega)$ is compact, see \cite[Theorem 7.1]{Di Nezza}. 
	\end{remark}
	%
	\begin{definition}
		Let $\Phi, \Psi \in \mathcal{N}(\Omega)$. We say that $\Psi$ essentially grows more slowly than $\Phi$ near infinity, and we write $\Psi\ll \Phi$, if for all $k>0$, there holds
		$$\lim_{t\rightarrow +\infty} \frac{\Psi(x,kt)}{\Phi(x,t)}=0, \quad \mbox{uniformly for} \; x\in\Omega.$$
	\end{definition}
	
	\begin{proposition}(\cite[Theorems 2.1 and 2.2]{Azroul2})\label{imbed-critical-bounded}
		Let $s\in(0,1)$ and $\Phi \in \mathcal{N}(\Omega\times \Omega)$ satisfying $(\varphi_3)$, where $\Omega$ is a bounded domain in $\mathbb{R}^N$ with $C^{0,1}$-regularity and bounded boundary. 
		\begin{itemize}
			\item [(i)] If \eqref{j2} and \eqref{cond-crit-1} hold, then the embedding
			$W^{s,\Phi_{x,y}}(\Omega)\hookrightarrow L_{\widehat{\Phi}_{s,x}^\ast}(\Omega)$
			is continuous;
			\item [(ii)] Moreover, for any $\Psi \in \mathcal{N}(\Omega)$ such that $\Psi \ll \widehat{\Phi}_s^\ast$, the embedding
			$W^{s,\Phi_{x,y}}(\Omega)\hookrightarrow L_{\Psi_x}(\Omega)$
			is compact.
		\end{itemize}		
	\end{proposition}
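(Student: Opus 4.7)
The plan is to prove part (i) by adapting the critical Sobolev embedding of Fernández Bonder--Salort for fractional Orlicz--Sobolev spaces to the Musielak setting, and then to deduce part (ii) from part (i) by combining Vitali's convergence theorem with the slower-growth condition $\Psi\ll \widehat{\Phi}^{*}_{s}$.

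For part (i), I would first apply Lemma \ref{M-N2} to reduce the continuous embedding to the modular estimate
$$
\int_{\Omega} \widehat{\Phi}^{*}_{s,x}\bigl(|u(x)|\bigr)\,\mathrm{d}x \leq C,
$$
whenever $\|u\|_{\widehat{\Phi}_x}+[u]_{s,\Phi_{x,y}}\leq 1$. The definition
$$
(\widehat{\Phi}^{*}_{s,x})^{-1}(t)=\int_0^t \frac{\widehat{\Phi}_x^{-1}(\tau)}{\tau^{(N+s)/N}}\,\mathrm{d}\tau
$$
is the Musielak analogue of the critical function in the Orlicz case, and the natural route is a level-set argument: write $|u|$ in terms of its distribution function $\mu_u(t)=|\{|u|>t\}|$, estimate $\mu_u(t)$ by a fractional isoperimetric-type inequality applied to the truncation of $u$ at height $t$, and integrate against the measure induced by $\widehat{\Phi}^{*}_{s,x}$. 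To handle the dependence of $\Phi_{x,y}$ on two points I would use $(\varphi_3)$ and condition \eqref{j2} to compare $\Phi_{x,y}(D_su)$ with $\widehat{\Phi}_x(D_su)$ on a small diagonal block, where the dominant contribution to $J_{s,\Phi}(u)$ concentrates, and then patch the remaining regions via a finite covering of $\Omega\times\Omega$ together with the $\Delta_2$-condition.

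For part (ii) I would argue by Vitali. Given a bounded sequence $(u_n)\subset W^{s,\Phi_{x,y}}(\Omega)$, Remark \ref{embed-comp-part} furnishes a subsequence with $u_n \to u$ almost everywhere in $\Omega$. It then suffices, together with the $\Delta_2$-property, to show that $\{\Psi_x(|u_n-u|)\}$ is equi-integrable on $\Omega$. Given $\varepsilon>0$, the condition $\Psi\ll\widehat{\Phi}^{*}_{s}$ yields, for every $\eta>0$, a constant $C_\eta$ such that
$$
\Psi_x(t)\leq \eta\,\widehat{\Phi}^{*}_{s,x}(t)+C_\eta,\qquad \mbox{for all } t\geq 0,\ x\in\Omega.
$$
Combining this bound with the uniform modular estimate $\int_\Omega \widehat{\Phi}^{*}_{s,x}(|u_n|)\,\mathrm{d}x\leq M$ produced by part (i), the family becomes uniformly integrable on subsets of small measure, and Vitali's theorem closes the argument.

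I expect the main obstacle to lie in part (i), specifically in obtaining the critical-exponent modular estimate when the modular $\Phi_{x,y}$ genuinely varies in both $x$ and $y$. In the Orlicz case the whole argument is driven by a one-variable Young function, so one can invoke a single rearrangement inequality; in the present setting, either a localization-and-covering device must be set up carefully so that errors from replacing $\Phi_{x,y}$ by $\widehat{\Phi}_x$ remain absorbable, or the truncation argument has to be rewritten directly with the two-variable modular, which demands new two-point Hardy--Sobolev estimates not reducible to the Fernández Bonder--Salort framework. Once part (i) is available, part (ii) follows by the essentially standard Vitali route sketched above.
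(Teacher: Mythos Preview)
The paper does not prove this proposition at all: it is quoted verbatim from \cite[Theorems~2.1 and~2.2]{Azroul2}, as the citation attached to the statement indicates. There is therefore no proof in the present paper to compare your proposal against.

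Your sketch is thus considerably more ambitious than what the paper itself does. The outline for part~(ii) via Vitali's theorem, using the splitting $\Psi_x(t)\leq \eta\,\widehat{\Phi}^{*}_{s,x}(t)+C_\eta$ together with the uniform modular bound from~(i) and the a.e.\ convergence supplied by Remark~\ref{embed-comp-part}, is a standard and sound route once~(i) is available. For part~(i) you correctly isolate the genuine obstacle, namely the two-variable dependence of $\Phi_{x,y}$; your localization-and-covering idea is in the right spirit, though as written it is only a plan and the details (controlling off-diagonal contributions uniformly, making the patching quantitative under~$(\varphi_3)$ and~\eqref{j2}) would need to be filled in before this could be called a proof. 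If you want to see how these details are handled, you should consult \cite{Azroul2} directly.
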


	We consider the following closed subspace of $W^{s,\Phi_{x,y}}(\Omega)$ defined by
	$$W_0^{s,\Phi_{x,y}}(\Omega)=\{u\in W^{s,\Phi_{x,y}}(\mathbb{R}^N): u=0~\mbox{a.e. in }\mathbb{R}^N\setminus \Omega\}$$
	where $\Omega \subset\mathbb{R}^N$ is a domain (bounded or not). We have the following generalized Poincar\'{e} type inequality.
	
	\begin{proposition}(\cite[Theorem 2.3]{Azroul2})\label{P-I}
		Let $s\in(0,1)$ and $\Omega$ be a bounded domain in $\mathbb{R}^N$ with $C^{0,1}$-regularity and bounded boundary. Assume that $(\varphi_1)-(\varphi_3)$ and \eqref{j2} hold. Then, there exists a positive constant $C$ such that
		$$\|u\|_{\widehat{\Phi}_x}\leq C [u]_{s,\Phi_{x,y}},$$
		for all $u \in 	W_0^{s,\Phi_{x,y}}(\Omega)$.
	\end{proposition}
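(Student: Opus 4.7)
The plan is to proceed by contradiction, using compactness. Assume the inequality fails; then there exists a sequence $(u_n)\subset W_0^{s,\Phi_{x,y}}(\Omega)$ with $\|u_n\|_{\widehat{\Phi}_x}>n\,[u_n]_{s,\Phi_{x,y}}$. Normalizing $v_n:=u_n/\|u_n\|_{\widehat{\Phi}_x}$ gives $\|v_n\|_{\widehat{\Phi}_x}=1$ and $[v_n]_{s,\Phi_{x,y}}\to 0$, so $(v_n)$ is bounded in $W_0^{s,\Phi_{x,y}}(\Omega)$.

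For the compactness step, I exploit that $\widehat{\Phi}$ grows subcritically compared with the Musielak-Sobolev conjugate $\widehat{\Phi}_s^\ast$: by Lemma \ref{M-N1}(i), $\widehat{\Phi}_x$ is controlled by $t^m$, while Lemma \ref{M-N critical}(i) shows $\widehat{\Phi}_{s,x}^\ast$ has growth of order $t^{m_s^\ast}$ with $m_s^\ast>m$, hence $\widehat{\Phi}\ll\widehat{\Phi}_s^\ast$. Proposition \ref{imbed-critical-bounded}(ii) then provides the compact embedding $W^{s,\Phi_{x,y}}(\Omega)\hookrightarrow L_{\widehat{\Phi}_x}(\Omega)$, so along a subsequence $v_n\to v$ strongly in $L_{\widehat{\Phi}_x}(\Omega)$ (hence $\|v\|_{\widehat{\Phi}_x}=1$) and a.e.\ in $\mathbb{R}^N$ after extending by zero outside $\Omega$. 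Lemma \ref{M-N2}(ii) gives $J_{s,\Phi}(v_n)\le \xi_0^+([v_n]_{s,\Phi_{x,y}})\to 0$, and continuity of $\Phi_{x,y}$ together with a.e.\ convergence of $D_sv_n$ to $D_sv$ allows Fatou's lemma to yield
\[
0\le J_{s,\Phi}(v)\le \liminf_n J_{s,\Phi}(v_n)=0.
\]
Strict positivity of $\Phi_{x,y}$ on $(0,\infty)$ then forces $D_sv=0$ a.e., so $v$ is a.e.\ constant.

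The main obstacle is upgrading this to $v\equiv 0$ in order to contradict $\|v\|_{\widehat{\Phi}_x}=1$. Since $v_n=0$ a.e.\ outside $\Omega$ and $v_n\to v$ a.e., one has $v=0$ a.e.\ on $\mathbb{R}^N\setminus\Omega$. When $[\cdot]_{s,\Phi_{x,y}}$ on $W_0^{s,\Phi_{x,y}}(\Omega)$ is interpreted as the natural Gagliardo seminorm on $\mathbb{R}^N\times\mathbb{R}^N$, the same Fatou step gives $D_sv=0$ a.e.\ on all of $\mathbb{R}^N\times\mathbb{R}^N$, so the essentially constant $v$ must vanish identically. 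Alternatively, Lemma \ref{embedding-classical} embeds the bounded sequence into some classical $W^{s',q}(\Omega)$, where the standard $W_0^{s',q}$ rigidity rules out a non-zero constant inside $\Omega$. Either route closes the contradiction and yields the desired $C>0$.
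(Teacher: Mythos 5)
The paper does not supply its own proof of this proposition: it is imported verbatim from \cite[Theorem~2.3]{Azroul2}, so there is nothing internal to compare against. Judging the proposal on its own terms, the contradiction--compactness strategy is a sound and standard way to prove such a Poincar\'e inequality, and you correctly identify the one genuinely delicate point, namely that the modular $J_{s,\Phi}$ on $W_0^{s,\Phi_{x,y}}(\Omega)$ must be read over $\mathbb{R}^N\times\mathbb{R}^N$ (consistent with $W_0^{s,\Phi_{x,y}}(\Omega)\subset W^{s,\Phi_{x,y}}(\mathbb{R}^N)$) so that $J_{s,\Phi}(v)=0$ together with $v\equiv 0$ off $\Omega$ forces $v\equiv 0$. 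That part of the argument is correct. The alternative you sketch via Lemma~\ref{embedding-classical} is weaker: that lemma lands in $W^{s',q}(\Omega)$, not $W_0^{s',q}(\Omega)$, and $W^{s',q}(\Omega)$ contains nonzero constants, so on its own it does not close the contradiction.

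There are, however, two gaps. First, your justification that $\widehat{\Phi}\ll\widehat{\Phi}_s^\ast$ does not work as written. Lemma~\ref{M-N critical}(i) bounds $\widehat{\Phi}_{s,x}^\ast(t)$ from \emph{below} for $t\geq 1$ by $\xi_2^-(t)=t^{\ell_s^\ast}$, not by $t^{m_s^\ast}$; comparing $m$ with $m_s^\ast$ therefore gives nothing, and the naive pointwise bound $\widehat{\Phi}_x(kt)/\widehat{\Phi}_{s,x}^\ast(t)\lesssim t^{\,m-\ell_s^\ast}$ need not tend to zero, since $m<\ell_s^\ast=\tfrac{N\ell}{N-s\ell}$ can fail when $m-\ell$ is large relative to $s/N$. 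The relation $\widehat{\Phi}\ll\widehat{\Phi}_s^\ast$ is indeed true, but one must argue through inverses: because $(\varphi_3)$ makes $\tau\mapsto\widehat{\Phi}_x^{-1}(\tau)/\tau^{1/m}$ increasing, one gets
\[
(\widehat{\Phi}_{s,x}^\ast)^{-1}(t)=\int_0^t\frac{\widehat{\Phi}_x^{-1}(\tau)}{\tau^{1+s/N}}\,\mathrm{d}\tau
\le \frac{\widehat{\Phi}_x^{-1}(t)}{t^{1/m}}\int_0^t\tau^{1/m-1-s/N}\,\mathrm{d}\tau
=\frac{\widehat{\Phi}_x^{-1}(t)\,t^{-s/N}}{1/m-s/N},
\]
so $(\widehat{\Phi}_{s,x}^\ast)^{-1}(t)/\widehat{\Phi}_x^{-1}(t)\to 0$ as $t\to\infty$, which is the correct inverse-function form of $\widehat{\Phi}\ll\widehat{\Phi}_s^\ast$. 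Second, the compactness step invokes Proposition~\ref{imbed-critical-bounded}(ii), which presupposes \eqref{cond-crit-1} and $\ell,m\in(1,N/s)$ (otherwise $\widehat{\Phi}_s^\ast$ is not even defined). These are not among the stated hypotheses of Proposition~\ref{P-I}, so as written your argument proves a weaker statement. To avoid this, one would need a compact embedding of $W^{s,\Phi_{x,y}}(\Omega)$ into $L_{\widehat{\Phi}_x}(\Omega)$ that does not pass through the critical conjugate; note also that the seemingly cheaper route through the compact $L^1$ embedding of Remark~\ref{embed-comp-part} does \emph{not} suffice here, because $L^1$ convergence does not control $\|v_n\|_{\widehat{\Phi}_x}$, and Theorem~\ref{unifor-conv}(ii) cannot be used either since its conclusion presupposes the equivalence of norms that Proposition~\ref{P-I} is supposed to establish.
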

	
	In view of Proposition \ref{P-I}, there exists a positive constant $\lambda_1$ such that 
	\begin{equation}\label{poincareconstant}
		\int_{\Omega} \widehat{\Phi}_x(|u(x)|)\;\mathrm{d}x\leq \lambda_1 \int_{\Omega}\int_{\Omega} \Phi_{x,y}\left(D_su(x,y)\right)\;\mathrm{d}\mu,
	\end{equation}
	for all $u\in	W_0^{s,\Phi_{x,y}}(\Omega)$. Moreover, $[\cdot]_{s,\Phi_{x,y}}$  is a norm on $W_0^{s,\Phi_{x,y}}(\Omega)$ which is equivalent to the usual norm $\|\cdot\|_{s,\Phi_{x,y}}$. For more details on this subject we refer the readers to \cite{Azroul2}. For the sake of simplicity, we use the notations $\|u\|:=[u]_{s,\Phi_{x,y}}.$

	\begin{remark}
		Throughout this work, we need to use some standard tools, such as: Poincar\'{e} type inequality and compactness results for embeddings in fractional Musielak--Orlicz--Sobolev space. For this reason, we shall assume that condition \eqref{j2} is satisfied. Consequently, by $\Delta_2$-condition, $\Phi_{x,y}(k)$, $\widehat{\Phi}_x(k)$, $\widetilde{\widehat{\Phi}}_x(k)$ and $\widehat{\Phi}_{s,x}^\ast(k)$ are bounded for each $k>0$.
	\end{remark}
	
	\section{Monotonicity and convergence results for operators in fractional Musielak-Sobolev spaces}\label{section 3}
	
	
	We start this Section by recalling some definitions introduced in \cite{Harjuleto-Hasto, Harjulehto2} which are needed to prove the uniform convexity of space $W^{s,\Phi_{x,y}}(\Omega)$ and Radon-Riesz property with respect to the modular function.
	\begin{definition} 
		A function $g:(0, +\infty) \rightarrow \mathbb{R}$ is said to be almost increasing if there exists a constant $a\geq1$ such that $g(s)\leq ag(t)$, for all $0<s<t.$ In a similar way, we define almost decreasing.
	\end{definition}
	
	\begin{definition}
		We say that a function $\Phi:\Omega\times\Omega\times [0,+\infty) \rightarrow [0,+\infty]$ is a generalized $\varPhi$-prefunction if $(x,y)\mapsto\Phi_{x,y}(|f(x,y)|)$ is measurable for all measurable function $f:\Omega\times\Omega \rightarrow \mathbb{R}$,
		$$\lim_{t\rightarrow 0^+} \Phi_{x,y}(t)=\Phi_{x,y}(0)= 0\quad \mbox{and}\quad \lim_{t\rightarrow+\infty}\Phi_{x,y}(t)=+\infty, \quad \mbox{for all} \hspace{0,2cm} (x,y)\in\Omega\times\Omega.$$
		A generalized $\varPhi$-prefunction $\Phi$ is said to be a
		\begin{itemize}
			\item[(i)] weak $\varPhi$-function if
			$$t\mapsto\frac{\Phi_{x,y}(t)}{t} \;\mbox{is almost increasing}\;\mbox{on}\; (0,+\infty),\quad \mbox{for all} \hspace{0,2cm} (x,y)\in\Omega\times\Omega;$$
			\item[(ii)] convex  $\varPhi$-function if $t\mapsto\Phi_{x,y}(t)$ is left-continuous and convex for all $(x,y)\in \Omega\times\Omega$.
		\end{itemize}
		The sets of weak and convex $\varPhi$-function are denoted by $\varPhi_w(\Omega\times\Omega)$ and $\varPhi_c(\Omega\times\Omega)$ respectively.
	\end{definition}
	
	\begin{remark}
		We point out that any generalized $N$-function is a weak $\varPhi$-function.
	\end{remark}
	
	\begin{definition}
		A function $\Phi\in \varPhi_c(\Omega\times\Omega)$ is said to be uniformly convex if for given $\varepsilon>0$, there
		exists $\delta:=\delta(\varepsilon) \in(0,1)$ such that
		$$\Phi_{x,y}\left(\frac{s+t}{2}\right)\leq (1-\delta)\frac{\Phi_{x,y}(s)+\Phi_{x,y}(t)}{2}, \quad \mbox{for all} \hspace{0,2cm} (x,y)\in\Omega\times\Omega,$$
		whenever $s,t\geq0$ and $|s-t|\geq\varepsilon\max\{s, t\}$. 
	\end{definition}
	
	Finally, we are able to prove our first main result, which can be stated as follows.
	\begin{theorem}\label{unifor-conv}
		Let $s\in(0,1)$ and assume that $(\varphi_1)-(\varphi_3)$ hold. Then, the following assertions hold: 
		\begin{itemize}
			\item[(i)]$W^{s,\Phi_{x,y}}(\Omega)$ is a uniformly convex space;
			\item[(ii)] If $u_n\rightharpoonup u$ in $W_0^{s,\Phi_{x,y}}(\Omega)$ and $J_{s,\Phi}(u_n)\rightarrow J_{s,\Phi}(u)$, then $u_n\rightarrow u$ in $W_0^{s,\Phi_{x,y}}(\Omega)$.
		\end{itemize}
	\end{theorem}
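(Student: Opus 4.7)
The plan is to prove uniform convexity by first establishing a pointwise uniform convexity of $\Phi_{x,y}$, then lifting it to the modular $J_{s,\Phi}$, and finally to the Luxemburg seminorm $[\cdot]_{s,\Phi_{x,y}}$. The key observation that lets us bypass the restrictive hypothesis \eqref{hip. convex} is that $(\varphi_3)$ forces both $\Phi_{x,y}$ and $\widetilde{\Phi}_{x,y}$ to satisfy $\Delta_2$ with constants uniform in $(x,y)$, and gives the quantitative sandwich $\ell \leq t^2\varphi_{x,y}(t)/\Phi_{x,y}(t) \leq m$ which, combined with $\ell>1$, is precisely the strength needed.

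For part (i), the heart of the argument is to show: there exists $\delta=\delta(\varepsilon)\in(0,1)$ such that for all $s,t\geq 0$ with $|s-t|\geq\varepsilon\max\{s,t\}$ one has $\Phi_{x,y}\!\left(\tfrac{s+t}{2}\right) \leq (1-\delta)\,\tfrac{\Phi_{x,y}(s)+\Phi_{x,y}(t)}{2}$, uniformly in $(x,y)$. I would normalize by $\max\{s,t\}=1$, write $\Phi_{x,y}$ in the integral form provided by $(\varphi_1)$--$(\varphi_3)$, and handle separately the regimes $\ell\geq 2$ (standard power-type Clarkson-style inequality) and $1<\ell<2$ (where the \emph{lower} bound $\xi_0^-$ of Lemma \ref{M-N2}(i) on a suitable subinterval supplies the strict gap that $\sqrt{t}$-convexity previously supplied). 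Once this pointwise statement is available, integrating against $\mathrm{d}\mu$ lifts it to a uniform convexity inequality for the modular $J_{s,\Phi}$; then the standard Luxemburg passage, using Lemma \ref{M-N2}(ii) to convert between modular and seminorm, yields uniform convexity of $[\cdot]_{s,\Phi_{x,y}}$. By Proposition \ref{P-I} this settles the full space $W_0^{s,\Phi_{x,y}}(\Omega)$, and an entirely analogous argument for $\|\cdot\|_{\widehat{\Phi}_x}$, packaged into a combined Luxemburg norm equivalent to the sum norm, covers $W^{s,\Phi_{x,y}}(\Omega)$.

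For part (ii), assume $u_n\rightharpoonup u$ in $W_0^{s,\Phi_{x,y}}(\Omega)$ and $J_{s,\Phi}(u_n)\to J_{s,\Phi}(u)$. Convexity of $\Phi_{x,y}$ yields the upper bound
\begin{equation*}
J_{s,\Phi}\!\left(\tfrac{u_n+u}{2}\right)\;\leq\;\tfrac{J_{s,\Phi}(u_n)+J_{s,\Phi}(u)}{2}\;\longrightarrow\;J_{s,\Phi}(u),
\end{equation*}
while the reverse inequality follows from weak lower semicontinuity of $J_{s,\Phi}$ (it is convex and strongly continuous, by $\Delta_2$) applied to $\tfrac{u_n+u}{2}\rightharpoonup u$; hence $J_{s,\Phi}\!\left(\tfrac{u_n+u}{2}\right)\to J_{s,\Phi}(u)$. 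Now split $\Omega\times\Omega$ into $A_{n,\varepsilon}=\{(x,y):|D_s u_n-D_s u|\geq \varepsilon\max(|D_s u_n|,|D_s u|)\}$ and its complement; after a short case analysis on the signs of $D_s u_n,\,D_s u$ (using that $\Phi_{x,y}$ is even, increasing on $[0,\infty)$, and that opposite-sign configurations already force $|\tfrac{a+b}{2}|$ to be small relative to $\max(|a|,|b|)$), the pointwise uniform convexity established in (i) delivers a $\delta$-gap on $A_{n,\varepsilon}$. Comparing the limiting identity above with the integrated convexity inequality forces $\int_{A_{n,\varepsilon}}\!\Phi_{x,y}(D_s u_n)+\Phi_{x,y}(D_s u)\,\mathrm{d}\mu\to 0$. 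On the complement, $\Delta_2$ combined with Lemma \ref{M-N2}(i) gives $\Phi_{x,y}(D_s(u_n-u))\leq \xi_0^+(\varepsilon)\bigl(\Phi_{x,y}(D_s u_n)+\Phi_{x,y}(D_s u)\bigr)$. Letting $n\to\infty$ then $\varepsilon\to 0$ yields $J_{s,\Phi}(u_n-u)\to 0$, and a last invocation of Lemma \ref{M-N2}(ii) converts modular convergence into $[u_n-u]_{s,\Phi_{x,y}}\to 0$.

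The main obstacle is the pointwise uniform convexity of $\Phi_{x,y}$ in the regime $1<\ell<2$: this is exactly the case excluded by \eqref{hip. convex}, and the previous literature's $\sqrt{t}$-convexity argument is unavailable. I expect the delicate step to be converting the Fukagai-type bound $\ell\leq t\Phi'_{x,y}(t)/\Phi_{x,y}(t)\leq m$, together with both $\Delta_2$ conditions, into a genuinely quantitative convexity defect uniform in $(x,y)$; everything downstream is a more or less standard translation between pointwise, modular and norm statements.
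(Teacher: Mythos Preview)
Your overall architecture is sound, but the paper takes a much shorter route that sidesteps precisely the obstacle you flag.

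For part (i), instead of proving pointwise uniform convexity of $\Phi_{x,y}$ by hand, the paper observes directly from $(\varphi_3)$ that
\[
\frac{\mathrm{d}}{\mathrm{d}t}\left(\frac{\Phi_{x,y}(t)}{t^\ell}\right)=\frac{t^{\ell+1}\varphi_{x,y}(t)-\ell t^{\ell-1}\Phi_{x,y}(t)}{t^{2\ell}}\geq 0,
\]
so $t\mapsto \Phi_{x,y}(t)/t^\ell$ is increasing for every $(x,y)$. This is the condition (aInc)$_\ell$ with $\ell>1$, and the paper then invokes \cite[Theorem~4.1]{Harjuleto-Hasto} to obtain a uniformly convex $\Psi\in\varPhi_c(\Omega\times\Omega)$ equivalent to $\Phi$ (with constants uniform in $(x,y)$), followed by \cite[Theorems~2.4.11 and 2.4.14]{Harjulehto1} to conclude that $L_{\widehat\Phi_x}(\Omega)$ and $L_{\Phi_{x,y}}(\Omega\times\Omega,\mathrm{d}\mu)$ are uniformly convex. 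Uniform convexity of $W^{s,\Phi_{x,y}}(\Omega)$ then follows via the isometric embedding $u\mapsto(u,D_su)$ into the product. In other words, the ``delicate step'' you isolate---turning the Fukagai sandwich into a quantitative convexity defect for $1<\ell<2$---is exactly the content of the cited Harjulehto--H\"ast\"o result, and the paper simply quotes it rather than reproving it. Your direct approach would ultimately reproduce that argument; it is more self-contained but substantially longer, and as written your proposal leaves this step open.

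For part (ii), the paper again cites off-the-shelf results: uniform convexity of the closed subspace $W_0^{s,\Phi_{x,y}}(\Omega)$ plus the $\Delta_2$-condition, together with \cite[Lemma~2.4.17 and Remark~2.4.19]{Harjulehto1}, give the modular Radon--Riesz property in one line. Your direct argument via weak lower semicontinuity, the midpoint modular identity, and the $A_{n,\varepsilon}$ splitting is a correct and standard alternative proof of the same fact; it is more explicit but again relies on the pointwise uniform convexity from (i), so it inherits the same unresolved step.
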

	\begin{proof}
		\noindent$(i)$ To prove the uniform convexity of the space $W^{s,\Phi_{x,y}}(\Omega)$, let us consider the linear operator $T:W^{s,\Phi_{x,y}}(\Omega) \rightarrow L_{\widehat{\Phi}_x}(\Omega)\times L_{\Phi_{x,y}}(\Omega\times\Omega,\mathrm{d}\mu)$ defined by
		$T(u)=\left(u, D_su\right).$
		It is not hard to see that $T$ is well-defined and it is an isometry. Thus, $T\left(W^{s,\Phi_{x,y}}(\Omega)\right)$ is a closed subspace of $L_{\widehat{\Phi}_x}(\Omega)\times L_{\Phi_{x,y}}(\Omega\times\Omega,\mathrm{d}\mu)$. Since $L_{\widehat{\Phi}_x}(\Omega)$ and $L_{\Phi_{x,y}}(\Omega\times\Omega,\mathrm{d}\mu)$ are Banach spaces, in order to prove that $W^{s,\Phi_{x,y}}(\Omega)$ is uniformly convex space, it is sufficient to show that the spaces $L_{\widehat{\Phi}_x}(\Omega)$ and $L_{\Phi_{x,y}}(\Omega\times\Omega,\mathrm{d}\mu)$ are  uniformly convex. Firstly, by using the condition $(\varphi_3)$ we obtain
		$$\begin{aligned}
			\frac{\mathrm{d}}{\mathrm{d}t}\left(\frac{\Phi_{x,y}(t)}{t^\ell}\right)&=\frac{t^\ell\Phi_{x,y}'(t) - \ell t^{\ell-1}\Phi_{x,y}(t)}{t^{2\ell}}\\
			& = \frac{t^{\ell+1}\varphi_{x,y}(t) - \ell t^{\ell-1}\Phi_{x,y}(t)}{t^{2\ell}}	\\	
			&\geq  \frac{t^{\ell +1}\varphi_{x,y}(t) - t^{\ell+1}\varphi_{x,y}(t)}{t^{2\ell}}=0,  \quad t>0,
		\end{aligned}$$
		which implies that the function $t\mapsto \frac{\Phi_{x,y}(t)}{t^\ell}$ is increasing on $(0,+\infty)$, for all $(x,y)\in\Omega\times\Omega$. Thus, it follows from \cite[Theorem 4.1]{Harjuleto-Hasto} that there exists a uniformly convex function $\Psi \in \varPhi_c(\Omega\times \Omega)$ such that
		$$\Phi_{x,y}(\lambda^{-1}t)\leq\Psi_{x,y}(t)\leq\Phi_{x,y}(\lambda t), \quad \mbox{for all} \hspace{0,2cm} (x,y)\in\Omega\times\Omega \hspace{0,2cm} \mbox{and} \hspace{0,2cm} t\geq0,$$
		for some $\lambda>1$. This fact combined with Lemma \ref{M-N2} $(i)$ imply in 
		$$\lambda^{-m}\Phi_{x,y}(t)\leq\Psi_{x,y}(t)\leq\lambda^m\Phi_{x,y}(t), \quad \mbox{for all} \hspace{0,2cm} (x,y)\in\Omega\times\Omega \hspace{0,2cm} \mbox{and} \hspace{0,2cm} t\geq0.$$
		Hence, the generalized $N$-functions $\Phi_{x,y}$ and $\widehat{\Phi}_x$ are also uniformly convex. Therefore, by \cite[Theorems 2.4.11 and 2.4.14]{Harjulehto1} we conclude that $L_{\widehat{\Phi}_{x}}(\Omega)$ and $L_{\Phi_{x,y}}(\Omega\times\Omega, \mathrm{d}\mu)$ are uniformly convex.
		
		$(ii)$ Since $W_0^{\Phi_{x,y}}(\Omega)$  is a closed subspace of $W^{\Phi_{x,y}}(\Omega)$, we have that $W_0^{\Phi_{x,y}}(\Omega)$ is uniformly convex. By recalling that $\Phi$ satisfies $\Delta_2$-condition, the required property follows from \cite[Lemma 2.4.17 and Remark 2.4.19]{Harjulehto1}.
	\end{proof}
	
	\begin{remark}
	In Theorem \ref{unifor-conv} $(i)$ the domain $\Omega$ could be unbounded or $\mathbb{R}^N$ itself.
	\end{remark}
	
	Inspired by \cite{A. Bahrouni et al. non-variat.}, we introduce a version of the classical Brezis-Lieb's Lemma \cite{Brezis-Lieb}, to modular functions. 
	\begin{proposition}[Brezis-Lieb type Lemma]
		Assume that  $(\varphi_1)-(\varphi_3)$ hold. Let $\{u_n\}_{n\in\mathbb{N}} $ be a bounded sequence in $ W_0^{s,\Phi_{x,y}}(\Omega)$ such that $u_n(x) \rightarrow u(x)$ a.e. in $\mathbb{R}^N$. Then,  $u \in W_0^{s,\Phi_{x,y}}(\Omega)$ and
		$$\lim_{n\rightarrow +\infty} \left( J_{s,\Phi}(u_n) - J_{s,\Phi}(u_n - u) \right) = J_{s,\Phi}(u).$$
	\end{proposition}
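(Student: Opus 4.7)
The plan is to adapt the classical Brezis--Lieb scheme to the modular functional $J_{s,\Phi}$. First, I would verify $u \in W_0^{s,\Phi_{x,y}}(\Omega)$ via Fatou's lemma: the boundedness of $\{u_n\}$ combined with Lemma \ref{M-N2} yields $\sup_n J_{s,\Phi}(u_n) < \infty$; the pointwise convergence $u_n \to u$ a.e.\ in $\mathbb{R}^N$ gives $D_s u_n(x,y) \to D_s u(x,y)$ for $d\mu$-a.e.\ $(x,y) \in \Omega \times \Omega$, so $J_{s,\Phi}(u) \le \liminf_n J_{s,\Phi}(u_n) < \infty$, and $u = 0$ a.e.\ outside $\Omega$ is inherited from the sequence.

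The analytic heart of the argument, and the step I expect to be the main obstacle, is a pointwise Brezis--Lieb type inequality: for every $\varepsilon > 0$ there exists $C_\varepsilon > 0$ such that
$$\bigl|\Phi_{x,y}(A+B) - \Phi_{x,y}(A) - \Phi_{x,y}(B)\bigr| \le \varepsilon\, \Phi_{x,y}(A) + C_\varepsilon\, \Phi_{x,y}(B),$$
uniformly for $(x,y) \in \Omega\times\Omega$ and $A,B \in \mathbb{R}$. I would prove it by splitting on the ratio $|B|/|A|$: when $|B| \le \delta |A|$, a mean-value estimate together with $(\varphi_3)$ controls $|\Phi_{x,y}(A+B) - \Phi_{x,y}(A)|$ by a multiple of $\delta\, \Phi_{x,y}(A)$ (using the $\Delta_2$-condition to absorb the argument of $\Phi_{x,y}$), while Lemma \ref{M-N2}(i) gives $\Phi_{x,y}(B) \le \delta^\ell\, \Phi_{x,y}(A)$; when $|B| > \delta|A|$, the $\Delta_2$-condition bounds each of the three terms by $C_\delta\, \Phi_{x,y}(B)$. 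Matching the two regimes by a suitable choice $\delta = \delta(\varepsilon)$ yields the required constant $C_\varepsilon$.

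With the inequality established, set $w_n := u_n - u$ and apply it with $A = D_s w_n(x,y)$ and $B = D_s u(x,y)$, so that $A+B = D_s u_n(x,y)$. Define
$$H_n(x,y) := \bigl|\Phi_{x,y}(D_s u_n) - \Phi_{x,y}(D_s w_n) - \Phi_{x,y}(D_s u)\bigr|$$
and the truncated sequence $G_n^\varepsilon := \bigl(H_n - \varepsilon\, \Phi_{x,y}(D_s w_n)\bigr)_+$. The pointwise inequality gives $0 \le G_n^\varepsilon \le C_\varepsilon\, \Phi_{x,y}(D_s u) \in L^1(\Omega\times\Omega, d\mu)$, and since $w_n \to 0$ a.e.\ in $\mathbb{R}^N$ forces $D_s w_n \to 0$ for $d\mu$-a.e.\ $(x,y)$, hence $H_n \to 0$ $d\mu$-a.e., dominated convergence yields $\int G_n^\varepsilon\, d\mu \to 0$. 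Finally, the uniform bound $M := \sup_n J_{s,\Phi}(w_n) < \infty$, obtained from the $\Delta_2$-condition applied to $w_n = u_n - u$ and the boundedness of $\{u_n\}$, together with the estimate $H_n \le G_n^\varepsilon + \varepsilon\, \Phi_{x,y}(D_s w_n)$, gives $\limsup_n \int H_n\, d\mu \le \varepsilon M$. Letting $\varepsilon \to 0^+$ delivers the desired limit.
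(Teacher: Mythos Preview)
Your proof is correct and follows the same overall Brezis--Lieb scheme as the paper: Fatou's lemma for membership of $u$, a pointwise estimate of the form
\[
\bigl|\Phi_{x,y}(D_s u_n)-\Phi_{x,y}(D_s w_n)-\Phi_{x,y}(D_s u)\bigr|\le \varepsilon\,\Phi_{x,y}(D_s w_n)+C_\varepsilon\,\Phi_{x,y}(D_s u),
\]
then the truncation $G_n^\varepsilon$ and dominated convergence.

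The only genuine difference is how the pointwise inequality is obtained. You split on the ratio $|B|/|A|$ and use the $\Delta_2$-condition and the power bounds of Lemma~\ref{M-N2}(i) in each regime. The paper instead applies the Mean Value Theorem to the pair $(|D_s u_n|,|D_s w_n|)$, getting a factor $z_n\varphi_{x,y}(z_n)\,|D_s u|$, and then invokes Young's inequality \eqref{Young's type inequality} together with the conjugate relation $\widetilde{\Phi}_{x,y}(t\varphi_{x,y}(t))\le\Phi_{x,y}(2t)$ to produce exactly the same $\varepsilon/C_\varepsilon$ splitting. Your route is more elementary and self-contained (it never touches the conjugate function), while the paper's route exploits the Orlicz machinery already set up in Section~\ref{ss21}; both yield the same constants up to harmless factors depending on $\ell,m$.
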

	\begin{proof}
		Firstly, by the boundedness of $\{u_n\}_{n\in\mathbb{N}}$, Fatou's Lemma and Lemmas \ref{M-N1} $(ii)$ and \ref{M-N2} $(ii)$, we have
		$$\int_\Omega \widehat{\Phi}_x(|u|) \; \mathrm{d}x \leq \liminf_{n\rightarrow +\infty} \int_\Omega \widehat{\Phi}_x(|u_n|) \; \mathrm{d}x <+\infty,$$
		i.e., $u\in L_{\widehat{\Phi}_x}(\Omega)$, and
		$$\int_\Omega\int_\Omega {\Phi}_{x,y}(|D_su|) \; \mathrm{d}\mu \leq \liminf_{n\rightarrow +\infty}\int_\Omega\int_\Omega {\Phi}_{x,y}(|D_su_n|) \; \mathrm{d}\mu  <+\infty.$$
		By using the property that $u_n$ = 0 a.e. in $\mathbb{R}^N\setminus\Omega$ for all $n\in\mathbb{N}$, it is easy to see that $u$ = 0 a.e. in $\mathbb{R}^N\setminus\Omega$. Hence, $u \in W_0^{s,\Phi_{x,y}}(\Omega)$.

		Now, in view of the Mean Value Theorem, for each $(x,y)\in \Omega\times\Omega$, there exists $z_n:=z_n(x,y)$ between $|D_su_n(x,y) - D_su(x,y)|$ and $|D_su_n(x,y)|$ such that 
		$$\left|\Phi_{x,y}(|D_su_n|) - \Phi_{x,y}(|D_su_n - D_su|)\right| = z_n\varphi_{x,y}(z_n)||D_su_n|-|D_su_n - D_su||,$$
		where we have used that $\Phi_{x,y}'(t)=t\varphi_{x,y}(t)$, for all $t\geq0$. Thus, by using  $(\varphi_2)$, we have
		$$\begin{aligned}
			\left|\Phi_{x,y}(|D_su_n|) - \Phi_{x,y}(|D_su_n - D_su|)\right| & \leq z_n\varphi_{x,y}(z_n)|D_su|\\
			& \leq (|D_su_n|+|D_su_n - D_su|) \varphi_{x,y}(|D_su_n|+|D_su_n - D_su|)|D_su|\\
		\end{aligned}$$
		For any $\varepsilon\in(0,1)$, the Young's inequality \eqref{Young's type inequality} and \eqref{est-conjugate} imply in
		$$\begin{aligned}
			(|D_su|&+|D_su_n - D_su|) \varphi_{x,y}(|D_su|+|D_su_n - D_su|)|D_su|\\
			&\leq \varepsilon\widetilde{\Phi}_{x,y}\big((|D_su|+|D_su_n - D_su|) \varphi_{x,y}(|D_su|+|D_su_n - D_su|)\big) + C_{\varepsilon}\Phi_{x,y}(|D_su|)\\
			& \leq \varepsilon 2^{m} \Phi_{x,y}(|D_su|+|D_su_n - D_su|) +C_{\varepsilon}\Phi_{x,y}(|D_su|)\\
			& \leq  \varepsilon C_{m}\Phi_{x,y}(|D_su_n - D_su|) + C_{\varepsilon,m}\Phi_{x,y}(|D_su|),
		\end{aligned}$$
		where $C_{m}:= 2^{2m-1}$ and $C_{\varepsilon,m}:=\varepsilon 2^{2m-1}+C_{\varepsilon}$. Therefore, we obtain
		\begin{equation}\label{est-BL1}
			\left|\Phi_{x,y}(|D_su_n|) - \Phi_{x,y}(|D_su_n - D_su|)\right| \leq \varepsilon C_{m} \Phi_{x,y}(|D_su_n - D_su|) + C_{\varepsilon,m}\Phi_{x,y}(|D_su|).
		\end{equation}
		Next, for $n\in\mathbb{N}$, we define
		$$\mathcal{W}_{\varepsilon,n}(x,y):= \big[ \left|\Phi_{x,y}(|D_su_n|) - \Phi_{x,y}(|D_su_n - D_su|) - \Phi_{x,y}(|D_su|) \right| - \varepsilon C_{m}\Phi_{x,y}(|D_su_n - D_su|) \big]^{+}, $$ 
		where $a^{+}:=\max\{a,0\},$ for all $a\in\mathbb{R}$. Note that $\mathcal{W}_{\varepsilon,n}(x,y) \rightarrow 0$, as $n\rightarrow +\infty$, a.e. in $\Omega\times\Omega$. Moreover, it follows from \eqref{est-BL1} that
		$$\begin{aligned}
			\left|\Phi_{x,y}(|D_su_n|) - \Phi_{x,y}(|D_su_n - D_su|) - \Phi_{x,y}(|D_su|) \right| &\leq \left|\Phi_{x,y}(|D_su_n|) - \Phi_{x,y}(|D_su_n - D_su|)\right|+ |\Phi_{x,y}(|D_su|)|\\
			& \leq \varepsilon C_{m} \Phi_{x,y}(|D_su_n - D_su|) + (C_{\varepsilon,m} + 1)\Phi_{x,y}(|D_su|),
		\end{aligned}$$
		which implies that 
		$$\mathcal{W}_{\varepsilon,n}(x,y)|x-y|^{-N} \leq (C_{\varepsilon,m} + 1)\Phi_{x,y}(|D_su|)|x-y|^{-N}  \in L^{1}(\Omega\times\Omega).$$
		Hence, in light of  Lebesgue's Dominated Convergence Theorem, there holds
		$$\int_{\Omega}\int_{\Omega} \mathcal{W}_{\varepsilon,n}(x,y)\; \mathrm{d}\mu \rightarrow 0, \quad \mbox{as } n\rightarrow\infty.$$
		This fact and the following inequality 
		$$\begin{aligned}
				| J_{s,\Phi}(u_n) - J_{s,\Phi}(u_n - u) - J_{s,\Phi}(u)|&\leq\int_{\Omega}\int_{\Omega} \left| \Phi_{x,y}(|D_su_n|) - \Phi_{x,y}(|D_su_n - D_su|) - \Phi_{x,y}(|D_su|) \right|\; \mathrm{d}\mu\\
				&\leq \int_{\Omega}\int_{\Omega} \left(  \mathcal{W}_{\varepsilon,n}(x,y)+\varepsilon C_{m} \Phi_{x,y}(|D_su_n - D_su|) \right)\; \mathrm{d}\mu\\
				& \leq \int_{\Omega}\int_{\Omega}  \mathcal{W}_{\varepsilon,n}(x,y) \;\mathrm{d}\mu + \varepsilon C_{m}J_{s,\Phi}(u_n - u), 
			\end{aligned}$$
		imply that
		$$\lim_{n\rightarrow +\infty} | J_{s,\Phi}(u_n) - J_{s,\Phi}(u_n - u) - J_{s,\Phi}(u)| \leq \varepsilon C_{m} K,$$
		for some constant $K>0$. Therefore, by making $\varepsilon \rightarrow 0$ we obtain the desired result.
	\end{proof}
	
	Due to Lemma \ref{M-N2} $(ii)$ and Brezis-Lieb-type Lemma we obtain the following convergence result:
	\begin{corollary}
		 Assume that $(\varphi_1)-(\varphi_3)$ hold. Let $u,u_n\in W_0^{s,\Phi_{x,y}}(\Omega)$, $n \in\mathbb{N}$.   Then, the following assertions are equivalent:
		\begin{itemize}
			\item[(i)] $\displaystyle\lim_{n\rightarrow +\infty} \|u_n - u\| = 0;$
			\item[(ii)] $\displaystyle\lim_{n\rightarrow +\infty} J_{s,\Phi}(u_n - u) = 0;$
			\item[(iii)] $u_n(x) \rightarrow u(x)$ for a.e  $x\in \Omega$ and $\displaystyle\lim_{n\rightarrow +\infty} J_{s,\Phi}(u_n) =J_{s,\Phi}(u).$ 
		\end{itemize}
	\end{corollary}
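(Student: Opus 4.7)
The plan is to establish (i) $\Leftrightarrow$ (ii) directly from the two-sided modular bound in Lemma \ref{M-N2}(ii), and then close the cycle using the Brezis-Lieb type lemma to handle (iii) $\Rightarrow$ (ii) and (ii) $\Rightarrow$ (iii). Applying Lemma \ref{M-N2}(ii) to $u_n - u \in W_0^{s,\Phi_{x,y}}(\Omega)$ yields
$$\xi_0^-(\|u_n - u\|) \leq J_{s,\Phi}(u_n - u) \leq \xi_0^+(\|u_n - u\|),$$
and since $\xi_0^\pm$ are continuous, nonnegative, and vanish only at the origin (because $\ell, m > 1$), a squeeze argument immediately shows $\|u_n - u\| \to 0$ if and only if $J_{s,\Phi}(u_n - u) \to 0$, settling (i) $\Leftrightarrow$ (ii).

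For (iii) $\Rightarrow$ (ii), I would observe that $J_{s,\Phi}(u_n) \to J_{s,\Phi}(u) < \infty$ forces $\{J_{s,\Phi}(u_n)\}$ to be bounded, which via Lemma \ref{M-N2}(ii) yields that $\{\|u_n\|\}$ is bounded in $W_0^{s,\Phi_{x,y}}(\Omega)$. The Brezis-Lieb type lemma proved above is then applicable and gives
$$J_{s,\Phi}(u_n) - J_{s,\Phi}(u_n - u) \to J_{s,\Phi}(u);$$
subtracting the hypothesis $J_{s,\Phi}(u_n) \to J_{s,\Phi}(u)$ produces $J_{s,\Phi}(u_n - u) \to 0$, which is (ii).

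For the converse (ii) $\Rightarrow$ (iii), I would first use (ii) $\Leftrightarrow$ (i) to secure $\|u_n - u\| \to 0$, so that $\{u_n\}$ is bounded in $W_0^{s,\Phi_{x,y}}(\Omega)$. The compact embedding $W_0^{s,\Phi_{x,y}}(\Omega) \hookrightarrow L^1(\Omega)$ from Remark \ref{embed-comp-part} yields $u_n \to u$ in $L^1(\Omega)$, so every subsequence of $\{u_n\}$ admits a further subsequence converging a.e. in $\Omega$. Along such a subsequence the Brezis-Lieb type lemma combined with $J_{s,\Phi}(u_n - u) \to 0$ forces $J_{s,\Phi}(u_n) \to J_{s,\Phi}(u)$, and by the standard subsequence-of-subsequence principle this upgrades to the full sequence. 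The main obstacle is precisely this last step: modular (equivalently, norm) convergence does not by itself guarantee pointwise convergence of the entire sequence, so one must route through compactness into a Lebesgue space and a subsequence extraction; once that bridge is in place, the Brezis-Lieb type lemma transfers the modular information back cleanly.
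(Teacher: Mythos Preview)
Your proof is correct and follows exactly the approach the paper indicates, namely combining Lemma \ref{M-N2}(ii) with the Brezis--Lieb type lemma; the paper itself offers no argument beyond citing these two ingredients. Your caveat about pointwise convergence of the full sequence in the direction (ii) $\Rightarrow$ (iii) is well taken---strictly speaking this is only guaranteed along subsequences, a subtlety the paper's statement glosses over.
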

	Now, we recall some definitions of operators of monotone type that we will use throughout this section.
	\begin{definition}
		Let $X$ be a reflexive Banach space with norm $\|\cdot\|_X$ and let $A:X \rightarrow X^\ast$ be an operator. Then $A$ is said to be
		\begin{itemize}
			\item[(i)] monotone (strictly monotone) if $\left\langle Au-Av,u-v\right\rangle\geq 0$ ($>0$), for all $u,v \in X$ with $u\neq v$;
			
			\item[(ii)] uniformly monotone if  $\left\langle Au-Av,u-v\right\rangle\geq \alpha(\|u-v\|)\|u-v\|$ for all $u,v \in X$, where $\alpha:[0,\infty) \rightarrow [0,\infty)$ is strictly increasing with $\alpha(0)=0$ and $\alpha(t)\rightarrow +\infty$, as $t\rightarrow \infty$;
			
			\item[(iii)] pseudomonotone if $u_n \rightharpoonup u$ weakly in $X$ and $\limsup_{n\rightarrow+\infty}\left\langle Au_n, u_n-u \right\rangle\leq 0 $ imply
			$$\left\langle Au, u-v\right\rangle \leq\liminf_{n\rightarrow +\infty} \left\langle Au_n, u_n -v \right\rangle, \quad \mbox{for all} \hspace{0,2cm} v\in X;$$
			
			\item[(iv)] coercive if there exists a function $\beta:[0,\infty)\rightarrow \mathbb{R}$ such that $\lim_{t\rightarrow +\infty}\beta(t)=+\infty$ and 
			$$\frac{\left\langle Au,u\right\rangle }{\|u\|_X}\geq \beta(\|u\|), \quad \mbox{for all}\hspace{0,2cm} u\in X.$$
		\end{itemize} 
	\end{definition}	
	
	\begin{lemma}\label{derivative}
		Let $s\in(0,1)$ and assume that $(\varphi_1)-(\varphi_3)$ hold. Then, $J_{s,\Phi}$ belongs to $C^1\big(W_0^{s,\Phi_{x,y}}(\Omega),\mathbb{R}\big)$ and its G\^{a}teaux derivative is given by 
		$$\langle J'_{s,\Phi}(u),v\rangle = \int_\Omega\int_\Omega \varphi_{x,y}\left(|D_su(x,y)|\right)  D_su(x,y)  D_sv(x,y)\;\mathrm{d}\mu,$$
		for all $u, v \in W_0^{s,\Phi_{x,y}}(\Omega)$.
	\end{lemma}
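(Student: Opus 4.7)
The plan is to verify the G\^ateaux derivative formula by differentiating under the integral sign via the Mean Value Theorem and Lebesgue's Dominated Convergence Theorem, and then to upgrade to $C^1$-regularity by proving continuity of the map $u\mapsto J'_{s,\Phi}(u)$ from $W_0^{s,\Phi_{x,y}}(\Omega)$ into its dual. The recurring technical ingredients are Young's inequality \eqref{Young's type inequality}, the conjugate estimate \eqref{est-conjugate}, and the $\Delta_2$-condition enjoyed by both $\Phi$ and $\widetilde{\Phi}$.

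For the G\^ateaux derivative, fix $u,v\in W_0^{s,\Phi_{x,y}}(\Omega)$ and $\lambda\in(0,1]$. Since $\Phi_{x,y}$ is even with $\Phi_{x,y}'(t)=t\varphi_{x,y}(|t|)$, the Mean Value Theorem furnishes a point $\xi_\lambda(x,y)$ lying between $D_s u(x,y)$ and $D_s u(x,y)+\lambda D_s v(x,y)$ such that the difference quotient equals $\varphi_{x,y}(|\xi_\lambda|)\,\xi_\lambda\, D_s v$. Using $(\varphi_2)$ together with $|\xi_\lambda|\leq|D_s u|+|D_s v|$, followed by Young's inequality, \eqref{est-conjugate}, and $\Delta_2$, the integrand is bounded in absolute value by a constant multiple of $\Phi_{x,y}(|D_s u|)+\Phi_{x,y}(|D_s v|)$, which belongs to $L^1(\Omega\times\Omega,d\mu)$ by Lemma \ref{M-N2}. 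Dominated Convergence then produces the explicit formula for $\langle J'_{s,\Phi}(u),v\rangle$. Linearity in $v$ is immediate, and a further application of \eqref{est-conjugate} combined with the H\"{o}lder-type inequality in $L_{\Phi_{x,y}}(\Omega\times\Omega,d\mu)$ yields the bound $|\langle J'_{s,\Phi}(u),v\rangle|\leq 2\,\big\||D_s u|\varphi_{x,y}(|D_s u|)\big\|_{\widetilde{\Phi}_{x,y}}\,\|v\|$, so that $J'_{s,\Phi}(u)$ is a bounded linear functional.

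For continuity, suppose $u_n\to u$ in $W_0^{s,\Phi_{x,y}}(\Omega)$, so that $J_{s,\Phi}(u_n-u)\to 0$ by the Brezis--Lieb-type corollary established above. Introduce the Nemytskii-type operator $N(w)(x,y):=\varphi_{x,y}(|w|)w$; by \eqref{est-conjugate} it sends $L_{\Phi_{x,y}}(\Omega\times\Omega,d\mu)$ into $L_{\widetilde{\Phi}_{x,y}}(\Omega\times\Omega,d\mu)$, and H\"{o}lder gives
$$
|\langle J'_{s,\Phi}(u_n)-J'_{s,\Phi}(u),v\rangle|\;\leq\;2\,\|N(D_s u_n)-N(D_s u)\|_{\widetilde{\Phi}_{x,y}}\,\|v\|.
$$
The main obstacle is thus to prove $N(D_s u_n)\to N(D_s u)$ in $L_{\widetilde{\Phi}_{x,y}}$. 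The strategy I would follow is: extract a subsequence along which $D_s u_n\to D_s u$ a.e. and $\Phi_{x,y}(|D_s u_n|)\to\Phi_{x,y}(|D_s u|)$ in $L^1(d\mu)$ (using the modular convergence above combined with Vitali); deduce $N(D_s u_n)\to N(D_s u)$ pointwise a.e. from continuity of $\varphi_{x,y}$; apply the bound $\widetilde{\Phi}_{x,y}(|N(w)|)\leq\Phi_{x,y}(2|w|)\leq K\,\Phi_{x,y}(|w|)$ and the $L^1$-convergence of the majorants to obtain equi-integrability of $\widetilde{\Phi}_{x,y}(|N(D_s u_n)-N(D_s u)|)$; invoke Vitali to conclude modular convergence in $L_{\widetilde{\Phi}_{x,y}}$; and finally convert this into norm convergence using the $\Delta_2$-condition on $\widetilde{\Phi}$. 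A standard subsequence-of-subsequence argument promotes convergence to the full sequence.
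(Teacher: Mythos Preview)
The paper itself omits the proof entirely, referring to \cite[Lemma~3.1]{Azroul1}; your argument is correct and is precisely the standard route such a reference follows: Mean Value Theorem plus Dominated Convergence for the G\^ateaux derivative, and a Nemytskii-operator continuity argument (via \eqref{est-conjugate}, $\Delta_2$, and Vitali) for the $C^1$-regularity. One cosmetic remark: you do not need the Brezis--Lieb-type corollary to pass from $\|u_n-u\|\to 0$ to $J_{s,\Phi}(u_n-u)\to 0$; Lemma~\ref{M-N2}(ii) already gives this directly.
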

	\begin{proof}
		The proof is similar to \cite[Lemma 3.1]{Azroul1} and we omit here.
	\end{proof}	
	
	Next, we shall prove some monotonicity properties of the operator $J'_{s,\Phi}$.
	\begin{proposition}\label{monot-1}
		 The operator $J'_{s,\Phi}:W_0^{s,\Phi_{x,y}}(\Omega)\rightarrow \big(W_0^{s,\Phi_{x,y}}(\Omega)\big)^\ast$ satisfies the following properties:
		\begin{itemize}
			\item[(i)] $J'_{s,\Phi}$ is bounded, coercive and monotone;
			\item [(ii)] $J'_{s,\Phi}$ is pseudomonotone.
		\end{itemize}
	\end{proposition}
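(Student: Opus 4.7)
The plan is to prove the four properties sequentially, exploiting the growth estimates in $(\varphi_3)$, the norm-modular comparisons of Lemma \ref{M-N2}, the Young-type inequality \eqref{est-conjugate}, and (crucially for the last step) the $(S_+)$-property announced as one of the main contributions of the paper.

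\emph{Boundedness.} For $u,v \in W_0^{s,\Phi_{x,y}}(\Omega)$, the Hölder-type inequality in $L_{\Phi_{x,y}}(\Omega\times\Omega,\mathrm{d}\mu)$ gives
\[
|\langle J'_{s,\Phi}(u),v\rangle| \leq 2\,\|\varphi_{x,y}(|D_su|)\,D_su\|_{\widetilde{\Phi}_{x,y}}\,[v]_{s,\Phi_{x,y}}.
\]
Applying \eqref{est-conjugate} and Lemma \ref{M-N2}(ii) to the first factor controls $\|J'_{s,\Phi}(u)\|_{*}$ by a function of $\|u\|$, yielding boundedness on bounded sets.

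\emph{Coercivity.} Using $(\varphi_3)$ pointwise and then Lemma \ref{M-N2}(ii),
\[
\langle J'_{s,\Phi}(u),u\rangle = \int_{\Omega}\!\!\int_{\Omega}\varphi_{x,y}(|D_su|)|D_su|^2\,\mathrm{d}\mu \geq \ell\, J_{s,\Phi}(u) \geq \ell\,\xi_0^-(\|u\|),
\]
so $\langle J'_{s,\Phi}(u),u\rangle/\|u\| \geq \ell\,\xi_0^-(\|u\|)/\|u\| \to +\infty$ as $\|u\|\to\infty$, since $\ell>1$.

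\emph{Monotonicity.} The key point is that, for each fixed $(x,y)$, the scalar map $\tau\mapsto \tau\varphi_{x,y}(|\tau|)$ is the derivative of the convex function $\tau\mapsto \Phi_{x,y}(|\tau|)$ and is therefore non-decreasing, giving
\[
\bigl(\tau\varphi_{x,y}(|\tau|)-\sigma\varphi_{x,y}(|\sigma|)\bigr)(\tau-\sigma) \geq 0, \qquad \tau,\sigma\in\mathbb{R}.
\]
Applying this with $\tau=D_su(x,y)$, $\sigma=D_sv(x,y)$ and integrating against $\mathrm{d}\mu$ yields $\langle J'_{s,\Phi}(u)-J'_{s,\Phi}(v),u-v\rangle\geq 0$.

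\emph{Pseudomonotonicity.} Suppose $u_n\rightharpoonup u$ in $W_0^{s,\Phi_{x,y}}(\Omega)$ and $\limsup_n \langle J'_{s,\Phi}(u_n),u_n-u\rangle \leq 0$. By the monotonicity just established,
\[
\langle J'_{s,\Phi}(u_n),u_n-u\rangle \geq \langle J'_{s,\Phi}(u),u_n-u\rangle \longrightarrow 0,
\]
so in fact $\lim_n \langle J'_{s,\Phi}(u_n),u_n-u\rangle=0$. The main obstacle is to upgrade this to strong convergence $u_n\to u$: this is exactly the $(S_+)$-property, which I plan to invoke from the dedicated statement in the paper (whose proof, without the convexity hypothesis \eqref{hip. convex}, relies on the uniform convexity of $W^{s,\Phi_{x,y}}(\Omega)$ and the Radon-Riesz property obtained in Theorem \ref{unifor-conv}). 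Once strong convergence is available, the continuity of $J'_{s,\Phi}:W_0^{s,\Phi_{x,y}}(\Omega)\to (W_0^{s,\Phi_{x,y}}(\Omega))^{*}$ coming from Lemma \ref{derivative} gives $J'_{s,\Phi}(u_n)\to J'_{s,\Phi}(u)$ in the dual, and splitting
\[
\langle J'_{s,\Phi}(u_n),u_n-v\rangle = \langle J'_{s,\Phi}(u_n)-J'_{s,\Phi}(u),u_n-v\rangle + \langle J'_{s,\Phi}(u),u_n-v\rangle
\]
yields $\lim_n \langle J'_{s,\Phi}(u_n),u_n-v\rangle = \langle J'_{s,\Phi}(u),u-v\rangle$ for every $v$, which is stronger than the required $\liminf$ inequality and completes the proof.
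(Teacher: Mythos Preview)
Your proofs of boundedness, coercivity and monotonicity are essentially the same as the paper's (the paper bounds $\langle J'_{s,\Phi}(u),v/\|v\|\rangle$ via Young's inequality directly rather than via H\"older, and obtains monotonicity from convexity of the functional $J_{s,\Phi}$ rather than pointwise, but these are cosmetic differences).

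For pseudomonotonicity, however, your route is a genuine detour compared with the paper. The paper simply observes that $J'_{s,\Phi}$ is continuous (Lemma~\ref{derivative}), hence hemicontinuous, and then invokes the classical fact that a monotone hemicontinuous operator is pseudomonotone \cite[Proposition~27.6]{Zeidler}; this is a one-line argument. Your approach instead passes through the $(S_+)$-property (Theorem~\ref{(S+)-2.0}) to upgrade to strong convergence and then uses continuity. This is logically sound---there is no circularity, since Theorem~\ref{(S+)-2.0} relies only on the monotonicity part of Proposition~\ref{monot-1}, which you have already established independently---but it is considerably heavier, and it creates a forward reference within the paper (Theorem~\ref{(S+)-2.0} comes after Proposition~\ref{monot-1}). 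Also, your parenthetical description of how the $(S_+)$-property is proved is not accurate: the paper's proof of Theorem~\ref{(S+)-2.0} does not go through Theorem~\ref{unifor-conv} (uniform convexity and Radon--Riesz) at all, but is a direct argument based on Vitali's convergence theorem and the pointwise monotonicity inequality~\eqref{ms1}. What you gain from your route is a slightly stronger conclusion (actual convergence of $\langle J'_{s,\Phi}(u_n),u_n-v\rangle$ rather than only a $\liminf$ inequality); what the paper's route buys is a self-contained two-line proof that does not depend on later material.
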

	
	\begin{proof}
		($i$)  Since $\Phi_{x,y}$ is convex, it follows that $J_{s,\Phi}$ is convex. Then, $J_{s,\Phi}'$ is a monotone operator. Next, we shall prove that $J'_{s,\Phi}$ is bounded. For this, let $u,v \in W_0^{s,\Phi_{x,y}}(\Omega)\setminus\{0\}$. It follows from Young's inequality \eqref{Young's type inequality}, \eqref{est-conjugate} and Lemma \ref{M-N2} $(i)$ that
		$$
		\begin{aligned}
		\left| \left\langle  J'_{s,\Phi}(u),\frac{v}{\|v\|} \right\rangle \right| & \leq  \int_{\Omega}\int_{\Omega} \varphi_{x,y}(|D_su|)|D_su|\left|\frac{D_sv}{\|v\|}\right| \mathrm{d}\mu \\
			& \leq   \int_{\Omega}\int_{\Omega} \left[ \widetilde{\Phi}_{x,y}\big(\varphi_{x,y}(|D_su|)|D_su|\big) + \Phi_{x,y}\left( \frac{|D_sv|}{\|v\|}\right) \right] \mathrm{d}\mu \\
			& \leq   \int_{\Omega}\int_{\Omega} \left[ 2^m \Phi_{x,y}\left(\|u\|\frac{|D_su|}{\|u\|}\right) + \Phi_{x,y}\left( \frac{|D_sv|}{\|v\|}\right) \right] \mathrm{d}\mu \\
			& \leq 2^m \xi_0^+(\|u\|) J'_{s,\Phi}\left(\frac{u}{\|u\|}\right) +  J'_{s,\Phi}\left(\frac{v}{\|v\|}\right)\\
			& \leq 2^m  \left( \xi_0^+(\|u\|) +1\right) .
		\end{aligned}
		$$
		Hence,
		$$ \|J'_{s,\Phi}(u)\|_\ast = \displaystyle\sup_{v \;\in W_0^{s,\Phi_{x,y}}(\Omega),\,v\neq 0} \frac{\left\langle  J'_{s,\Phi}(u),v \right\rangle }{\|v\|}  \leq 2^m \left( \xi_0^+(\|u\|) +1\right), $$
		which implies that $J'_{s,\Phi}$ is bounded. It remains to prove that $J'_{s,\Phi}$ is coercive. For each $u\in W_0^{s,\Phi_{x,y}}(\Omega)\setminus\{0\}$, it follows from condition $(\varphi_3)$ and Lemma \ref{M-N2} ($ii$) that
		$$
		\begin{aligned}
			\frac{\langle J'_{s,\Phi}(u), u\rangle}{\|u\|}&= \frac{1}{\|u\|} \int_{\Omega}\int_{\Omega} \varphi_{x,y}(|D_su|) (D_su)^2 \; \mathrm{d}\mu\\
			& \geq \frac{\ell}{\|u\|} \int_{\Omega}\int_{\Omega} \Phi_{x,y}\left(|D_su|\right) \mathrm{d}\mu\\
			& \geq \frac{\ell}{\|u\|}  \min\{\|u\|^\ell, \|u\|^m\}\\
			& = \ell \min\{\|u\|^{\ell-1}, \|u\|^{m-1}\}.
		\end{aligned}
		$$

		
		Hence, since $m\geq\ell>1$, we conclude that
		$$\lim_{\|u\|\rightarrow +\infty} \frac{\langle J'_{s,\Phi}(u), u\rangle}{\|u\|} = +\infty,$$
		which proves that $J'_{s,\Phi}(u)$ is coercive.
		
		($ii$) By Lemma \ref{derivative}, $J'_{s,\Phi}$ is continuous, in particular, is hemicontinuous. Thus, since $J'_{s,\Phi}$ is monotone, it follows from \cite[Proposition 27.6]{Zeidler} that $J'_{s,\Phi}$ is pseudomonotone.
	\end{proof}

		Now, let us assume the following conditions:
	\begin{itemize}
		\item [$(\varphi_4)$] for each $(x,y) \in \Omega\times\Omega$, $t\mapsto \varphi_{x,y}(t)$ is a $C^1$-function on $(0,+\infty)$;
		\item[$(\varphi_5)$] $t\mapsto \varphi_{x,y}(t)$ is increasing in $(0,+\infty)$
	\end{itemize}
	
	Under these conditions, we can state another monotonicity property of the operator $J'_{s,\Phi}$, which is motivated by the work of Montenegro \cite{Montenegro}.
	\begin{proposition}
		Suppose that $(\varphi_1), (\varphi_3), (\varphi_4)$ and $(\varphi_5)$ hold. Then, $J'_{s,\Phi}$ is uniformly monotone.
	\end{proposition}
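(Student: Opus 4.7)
The plan is to reduce the uniform monotonicity of $J'_{s,\Phi}$ to a pointwise algebraic inequality for $\varphi_{x,y}$, integrate it over $\Omega\times\Omega$ against $\mathrm{d}\mu$, and then apply Lemma~\ref{M-N2}(ii) to convert the resulting modular estimate into a bound in the norm $\|\cdot\|$. The target is
$$\langle J'_{s,\Phi}(u)-J'_{s,\Phi}(v),u-v\rangle\;\geq\;\alpha(\|u-v\|)\,\|u-v\|$$
with $\alpha(t):=C\min\{t^{\ell-1},t^{m-1}\}$ for a suitable constant $C>0$.

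The heart of the argument is the pointwise estimate
$$\bigl[\varphi_{x,y}(|a|)a-\varphi_{x,y}(|b|)b\bigr](a-b)\;\geq\;C\,\Phi_{x,y}(|a-b|),\qquad a,b\in\mathbb{R},\ (x,y)\in\Omega\times\Omega,$$
which I would prove by a case analysis on the signs of $a,b$. In the same-sign case, say $a\geq b\geq 0$, one writes
$$\varphi_{x,y}(a)a-\varphi_{x,y}(b)b=\varphi_{x,y}(a)(a-b)+b\bigl(\varphi_{x,y}(a)-\varphi_{x,y}(b)\bigr);$$
by $(\varphi_5)$ the second summand has the same sign as $a-b\geq 0$, and since $0\leq a-b\leq a$, monotonicity of $\varphi_{x,y}$ together with $(\varphi_3)$ yields $\varphi_{x,y}(a)(a-b)^2\geq\varphi_{x,y}(a-b)(a-b)^2\geq\ell\,\Phi_{x,y}(|a-b|)$. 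The case $a,b\leq 0$ reduces to this via $a\mapsto -a$, $b\mapsto -b$. When $a>0>b$, every summand in the expanded product is non-negative, so it is at least $\varphi_{x,y}(a)a^2+\varphi_{x,y}(|b|)|b|^2\geq\ell[\Phi_{x,y}(a)+\Phi_{x,y}(|b|)]$; convexity of $\Phi_{x,y}$ together with the $\Delta_2$-condition gives $\Phi_{x,y}(a+|b|)\leq(K/2)[\Phi_{x,y}(a)+\Phi_{x,y}(|b|)]$, and since $|a-b|=a+|b|$ in this case, the estimate closes.

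With the pointwise inequality in hand, applying Lemma~\ref{derivative}, integrating over $\Omega\times\Omega$, and invoking Lemma~\ref{M-N2}(ii) yields
$$\langle J'_{s,\Phi}(u)-J'_{s,\Phi}(v),u-v\rangle\;\geq\;C\,J_{s,\Phi}(u-v)\;\geq\;C\,\xi_0^-(\|u-v\|)=\alpha(\|u-v\|)\,\|u-v\|,$$
where the identity $\xi_0^-(t)=t\cdot\min\{t^{\ell-1},t^{m-1}\}$ has been used. The function $\alpha$ is strictly increasing, vanishes at $0$, and diverges at infinity (via the $t^{\ell-1}$-branch, since $\ell>1$), establishing the uniform monotonicity. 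I expect the delicate step to be the opposite-sign case of the pointwise inequality, where one must combine convexity and $\Delta_2$ in such a way that the resulting constant does not depend on $(x,y)$; fortunately, the $\Delta_2$-constant furnished by $(\varphi_3)$ depends only on $\ell$ and $m$, so uniformity over $\Omega\times\Omega$ comes for free. It is worth noting that this approach does not actually require $(\varphi_4)$, which is nevertheless available and could equivalently be used via the Mean Value Theorem applied to the $C^1$-function $t\mapsto t\varphi_{x,y}(t)$ to derive the same pointwise inequality.
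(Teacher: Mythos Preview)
Your proof is correct and follows a genuinely different route from the paper's. The paper exploits $(\varphi_4)$ directly: writing $a_{x,y}(t)=t\varphi_{x,y}(|t|)$, it uses the fundamental theorem of calculus to express
\[
\bigl[\varphi_{x,y}(|\xi|)\xi-\varphi_{x,y}(|\eta|)\eta\bigr](\xi-\eta)=\int_0^1 a'_{x,y}(t\xi+(1-t)\eta)(\xi-\eta)^2\,\mathrm{d}t,
\]
bounds $a'_{x,y}\geq\varphi_{x,y}$ via $(\varphi_4)$--$(\varphi_5)$, and then restricts the integral to a subinterval of $[0,1]$ on which $|t\xi+(1-t)\eta|\gtrsim|\xi-\eta|$, obtaining the pointwise estimate with constant $4^{1-m}\ell$. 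Your sign-case analysis sidesteps the integral representation entirely and, as you correctly note, dispenses with the $C^1$-hypothesis $(\varphi_4)$: only $(\varphi_3)$, $(\varphi_5)$, convexity and the $\Delta_2$-constant (all uniform in $(x,y)$ through $\ell,m$) are used. The paper's argument is more unified in that a single computation covers all $\xi,\eta\in\mathbb{R}$, whereas yours is more elementary and actually establishes the proposition under strictly weaker assumptions. The final step---integrating the pointwise bound against $\mathrm{d}\mu$ and applying Lemma~\ref{M-N2}(ii) to pass from the modular to $\xi_0^-(\|u-v\|)=\min\{\|u-v\|^{\ell-1},\|u-v\|^{m-1}\}\|u-v\|$---is identical in both proofs.
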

	\begin{proof}
		Let $a_{x,y}(t)=\varphi_{x,y}(|t|)t$. In view of $(\varphi_4)$ and $(\varphi_5)$, we obtain
		\begin{equation}\label{est-3.1}
			a'_{x,y}(t)=\varphi'_{x,y}(|t|)\frac{t^2}{|t|}+\varphi_{x,y}(|t|)\geq \varphi_{x,y}(|t|), \quad \mbox{for all} \;t\neq0.
		\end{equation}
		For any $\xi,\eta\in \mathbb{R}$ and $0<t\leq \frac{1}{4}$ there holds
		$$\frac{1}{4}|\xi-\eta|\leq |t\xi +(1-t)\eta|.$$
		This fact combined with $(\varphi_3), (\varphi_5)$, \eqref{est-3.1} and Lemma \ref{M-N2} $(i)$, imply that
		$$
		\begin{aligned}
			\left(\varphi_{x,y}(|\xi|)\xi - \varphi_{x,y}(|\eta|)\eta\right)(\xi - \eta)&=\int_{0}^{1} \frac{\mathrm{d}}{\mathrm{d}t} \Big(a_{x,y}(t\xi +(1-t)\eta)\Big) (\xi-\eta)\;\mathrm{d}t\\
			& =\int_{0}^{1} a'_{x,y}(t\xi +(1-t)\eta) (\xi - \eta)^2 \; \mathrm{d}t\\
			& \geq \int_{0}^{1} \varphi_{x,y}(|t\xi +(1-t)\eta|) (\xi - \eta)^2 \; \mathrm{d}t\\
			& \geq \int_{0}^{\frac{1}{4}} \varphi_{x,y}(|t\xi +(1-t)\eta|) (\xi - \eta)^2 \;\mathrm{d}t\\
			& \geq \int_{0}^{\frac{1}{4}} 16 \varphi_{x,y}\left(\frac{1}{4}|\xi -\eta|\right) \left(\frac{1}{4}|\xi - \eta|\right)^2 \mathrm{d}t\\
			& \geq 4 \ell\Phi_{x,y}\left(\frac{1}{4}|\xi -\eta|\right)\\
			&\geq 4^{1-m}\ell\Phi_{x,y}\left(|\xi -\eta|\right).
		\end{aligned}
		$$
		Thus, using the above inequality and Lemma \ref{M-N2} $(ii)$, we have
		$$
		\begin{aligned}
			\left\langle J'_{s,\Phi}(u)- J'_{s,\Phi}(v), u-v\right\rangle&=\int_{\Omega} \int_{\Omega} \left(\varphi_{x,y}(|D_su|)D_su - \varphi_{x,y}(|D_sv|)D_sv\right) (D_su - D_sv)\;\mathrm{d}\mu\\
			& \geq  4^{1-m}\ell \int_{\Omega} \int_{\Omega} \Phi_{x,y}\left(| D_su -D_sv|\right)\mathrm{d}\mu\\
			& \geq 4^{1-m}\ell \min\{\|u-v\|^\ell, \|u-v\|^m\}\\
			& = 4^{1-m}\ell \min\{\|u-v\|^{\ell-1}, \|u-v\|^{m-1}\} \|u-v\|.
		\end{aligned}
		$$
		Therefore, considering the function $\alpha(t)= 4^{1-m}\ell\min\{t^{\ell-1}, t^{m-1}\}$ for $t\geq0$, we conclude that $J'_{s,\Phi}$ is uniformly monotone.
	\end{proof}

	\begin{definition}\label{S+definition}
		We say that $J_{s,\Phi}'$ satisfies the $(S_+)$-property if for a given $\{u_{n}\}_{n\in\mathbb{N}}\subset W_0^{s,\Phi_{x,y}}(\Omega)$ satisfying $u_n\rightharpoonup u$ weakly in $ W_0^{s,\Phi_{x,y}}(\Omega)$ and
		 $$\limsup_{n\rightarrow\infty}\langle J_{s,\Phi}'(u_n), u_n-u\rangle\leq 0,$$
		there holds $u_n\rightarrow u$ strongly in $ W_0^{s,\Phi_{x,y}}(\Omega)$.
	\end{definition}
	
	\begin{theorem}\label{(S+)-2.0}
		Assume that $(\varphi_1) - (\varphi_3)$ hold. Then, $J_{s,\Phi}'$ satisfies the $(S_+)$-property.
	\end{theorem}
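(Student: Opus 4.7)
The plan is to leverage the Radon-Riesz type property already established in Theorem \ref{unifor-conv}(ii): once we know that $J_{s,\Phi}(u_n)\to J_{s,\Phi}(u)$ for a weakly convergent sequence, strong convergence follows for free. So the entire argument reduces to promoting the limsup hypothesis on $\langle J'_{s,\Phi}(u_n), u_n-u\rangle$ to convergence of the modulars $J_{s,\Phi}(u_n)\to J_{s,\Phi}(u)$.

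To accomplish this, I would exploit the convexity of $\Phi_{x,y}$ (which by Fatou makes $J_{s,\Phi}$ a convex, continuous, hence weakly sequentially lower semicontinuous functional on $W_0^{s,\Phi_{x,y}}(\Omega)$) together with the $C^1$ character of $J_{s,\Phi}$ from Lemma \ref{derivative}. Convexity gives the subgradient inequality
\begin{equation*}
J_{s,\Phi}(u) \;\geq\; J_{s,\Phi}(u_n) + \langle J'_{s,\Phi}(u_n),\, u - u_n\rangle,
\end{equation*}
which can be rewritten as $J_{s,\Phi}(u_n) - J_{s,\Phi}(u) \leq \langle J'_{s,\Phi}(u_n), u_n - u\rangle$. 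Taking $\limsup$ on both sides and using the hypothesis, I obtain $\limsup_{n\to\infty} J_{s,\Phi}(u_n) \leq J_{s,\Phi}(u)$. On the other hand, weak lower semicontinuity of $J_{s,\Phi}$ at $u$ yields $J_{s,\Phi}(u) \leq \liminf_{n\to\infty} J_{s,\Phi}(u_n)$. Combining the two bounds gives $J_{s,\Phi}(u_n)\to J_{s,\Phi}(u)$.

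With the modular convergence in hand, Theorem \ref{unifor-conv}(ii) immediately delivers $u_n\to u$ strongly in $W_0^{s,\Phi_{x,y}}(\Omega)$, completing the proof. I do not expect any genuine obstacle in this approach; the only step that warrants a brief justification is the weak lower semicontinuity of the modular, which is standard but deserves a one-line remark invoking either Fatou's lemma (combined with almost everywhere convergence of a subsequence of a weakly convergent sequence, after passing through the isometry $T$ used in Theorem \ref{unifor-conv}) or the classical fact that a convex, norm-continuous functional on a Banach space is weakly sequentially lower semicontinuous. This is the main advantage of having already proved the uniform convexity and Radon-Riesz property without the convexity assumption \eqref{hip. convex}: it renders the $(S_+)$-property an almost immediate corollary under only $(\varphi_1)$--$(\varphi_3)$.
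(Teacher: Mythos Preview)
Your argument is correct and constitutes a genuinely different, more streamlined route than the paper's own proof. The paper proceeds by a direct measure-theoretic argument: it first uses monotonicity and the weak convergence to upgrade the limsup hypothesis to $\langle J'_{s,\Phi}(u_n)-J'_{s,\Phi}(u),u_n-u\rangle\to 0$, then reads this as $L^1$-convergence of the nonnegative integrands $f_n(x,y)=(\varphi_{x,y}(|D_su_n|)D_su_n-\varphi_{x,y}(|D_su|)D_su)(D_su_n-D_su)$, exploits the compact embedding into $L^1(\Omega)$ to obtain a.e.\ convergence, and finally derives a pointwise bound of the form $\Phi_{x,y}(|D_su_n-D_su|)\leq C\,f_n+C\,\Phi_{x,y}(|D_su|)$ via Young-type estimates in order to apply Vitali's theorem and conclude $J_{s,\Phi}(u_n-u)\to 0$. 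Your approach bypasses all of this by combining the convex subgradient inequality with weak lower semicontinuity of $J_{s,\Phi}$ (the latter justified, as you note, by the standard ``convex $+$ continuous $\Rightarrow$ weakly sequentially l.s.c.'' fact) to obtain $J_{s,\Phi}(u_n)\to J_{s,\Phi}(u)$, and then invoking Theorem~\ref{unifor-conv}(ii). The payoff is a proof of a few lines instead of a page. The one thing the paper's hands-on approach buys that yours does not is robustness to the endpoint $\ell=1$: as the paper remarks immediately after the theorem, the Vitali argument survives when $(\varphi_3)$ is assumed only with $\ell\geq 1$, so that $W^{s,\Phi_{x,y}}(\Omega)$ may fail to be reflexive or uniformly convex; in that regime Theorem~\ref{unifor-conv}(ii) is unavailable and your shortcut would break down.
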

	\begin{proof}
		Suppose that $u_n\rightharpoonup u$ weakly in $W_0^{s,\Phi_{x,y}}(\Omega)$ and $\limsup_{n\rightarrow\infty}\langle J_{s,\Phi}'(u_n),u_n-u\rangle\leq 0$. In order to prove that $u_n \rightarrow u$ strongly in $W_0^{s,\Phi_{x,y}}(\Omega)$, it is sufficient to show that
		\begin{equation}\label{lim-3.1}
			\lim_{n\rightarrow +\infty} J_{s,\Phi}(u_n-u) = \lim_{n\rightarrow +\infty} \int_{\Omega} \int_{\Omega} \Phi_{x,y}(|D_su_n - D_su|)\; \mathrm{d}\mu =0.
		\end{equation}  
		Since the embedding $W_0^{s,\Phi_{x,y}}(\Omega)\hookrightarrow L^1(\Omega)$ is compact (see Remark \ref{embed-comp-part}), we have that $u_n(x) \rightarrow u(x)$ a.e in $\Omega$. Then, $D_su_n(x,y) \rightarrow D_su(x,y)$ a.e. in $\Omega\times\Omega$, which implies that
		\begin{equation}\label{lim-3.2}
			\lim_{n\rightarrow +\infty}\Phi_{x,y}(|D_su_n(x,y) - D_su(x,y)|) |x-y|^{-N} = 0, \quad \mbox{a.e. in}\;\Omega\times\Omega.
		\end{equation}
		In view from \eqref{lim-3.2} and Vitali's Theorem \cite[Corollary 4.5.5]{Bogachev}, to prove \eqref{lim-3.1}, it is sufficient to prove that the sequence
		\begin{equation}\label{seq1}
			\left\lbrace\Phi_{x,y}(|D_su_n(x,y) - D_su(x,y)|) |x-y|^{-N}\right\rbrace_{n\in\mathbb{N}}
		\end{equation}
		has uniformly absolutely continuous integral over $\Omega\times \Omega$. Firstly, note that Lemma \ref{derivative} and the weak convergence $u_n\rightharpoonup u$ in $W_0^{s,\Phi_{x,y}}(\Omega)$ imply that
		$$\lim_{n\rightarrow+\infty}\left\langle  J_{s,\Phi}'(u), u_n - u\right\rangle=0.$$
		Thus,
		$$\limsup_{n\rightarrow+\infty}  \left\langle  J_{s,\Phi}'(u_n) -  J_{s,\Phi}'(u), u_n-u \right\rangle\leq 0.$$
		Hence, the monotonicity of operator $J_{s,\Phi}'$  jointly with the limit just above implies that
		$$0\leq\liminf_{n\rightarrow +\infty}\left\langle  J_{s,\Phi}'(u_n) - J_{s,\Phi}'(u), u_n - u\right\rangle\leq\limsup_{n\rightarrow+\infty}  \left\langle  J_{s,\Phi}'(u_n) -  J_{s,\Phi}'(u), u_n-u \right\rangle\leq 0,$$
		i.e.,
		\begin{equation}\label{lim-3.3}
			\lim_{n\rightarrow+\infty}  \left\langle  J_{s,\Phi}'(u_n) - J_{s,\Phi}'(u), u_n-u \right\rangle= 0.
		\end{equation} 
		For $n\in\mathbb{N}$, define
		$$f_n(x,y):=\Big(\varphi_{x,y}(|{D_s}u_n(x,y)|){D_s}u_n(x,y)-\varphi_{x,y}(|{D_s}u(x,y)|){D_s}u(x,y)\Big)\left({D_s}u_n(x,y)-{D_s}u(x,y)\right).$$
		Since $(\varphi_2)$ holds, a direct computation infers
		\begin{equation}\label{ms1}
			(t\varphi_{x,y}(|t|) - s\varphi_{x,y}(|s|)) (t-s)\geq0,\quad \mbox{for all} \; (x,y)\in \Omega\times \Omega \; \mbox{and} \; s,t \in \mathbb{R},
		\end{equation}
		see for instance \cite[Lemma 7.5]{Alves} or \cite[Proposition 2.5]{Da Silva}. The inequality \eqref{ms1} combined with the limit \eqref{lim-3.3} imply that the sequence $\{f_n(x,y)|x-y|^{-N}\}_{n\in\mathbb{N}}$ converges to $0$ in $L^1(\Omega\times \Omega)$.  Thus, by converse Vitali's Theorem \cite[Corollary 4.5.5]{Bogachev}, $\{f_n(x,y)|x-y|^{-N}\}_{n\in\mathbb{N}}$ has uniformly absolutely continuous integral over $\Omega\times\Omega$.
		
		Now, observe that 
		\begin{equation}\label{est-3.4}
			\begin{aligned}
				f_n(x,y)=&\varphi_{x,y}(|{D_s}u_n(x,y)|)(D_su_n)^2(x,y)+\varphi_{x,y}(|{D_s}u(x,y)|){(D_su)}^2(x,y)\\
				&-\varphi_{x,y}(|{D_s}u_n(x,y)|){D_s}u_n(x,y){D_s}u(x,y)-\varphi_{x,y}(|{D_s}u(x,y)|){D_s}u(x,y){D_s}u_n(x,y).\\
			\end{aligned}
		\end{equation} 
		For each $\varepsilon\in(0,1)$, using Young's inequality \eqref{Young's type inequality}, \eqref{est-conjugate}, \eqref{est-3.4}, Lemma \ref{M-N2} $(i)$ and $(\varphi_3)$, we obtain 
		$$
		\begin{aligned}
			\varphi_{x,y}(|{D_s}u_n(x,y)|)(D_su_n)^2(x,y) & = f_n(x,y) - \varphi_{x,y}(|{D_s}u(x,y)|)(D_su)^2(x,y)\\
			&\hspace{0,5cm}+ \varphi_{x,y}(|{D_s}u_n(x,y)|){D_s}u_n(x,y){D_s}u(x,y)\\
			& \hspace{0,5cm} +\varphi_{x,y}(|{D_s}u(x,y)|){D_s}u(x,y){D_s}u_n(x,y)\\
			&\leq f_n(x,y)+\varepsilon\widetilde\Phi_{x,y}(\varphi_{x,y}(|{D_s}u_n(x,y)|)|{D_s}u_n(x,y)|)\\
			&\hspace{0,5cm}+ C_\varepsilon\Phi_{x,y}(|{D_s}u(x,y)|)+ \widetilde{C}_\varepsilon\widetilde\Phi_{x,y}(\varphi_{x,y}(|{D_s}u(x,y)|)|{D_s}u(x,y)|)\\
			& \hspace{0,5cm}+\varepsilon\Phi_{x,y}(|{D_s}u_n(x,y)|)\\
			&\leq  f_n(x,y)+({C}_\varepsilon+2^m\widetilde{C}_\varepsilon )\Phi_{x,y}(|{D_s}u(x,y)|)\\
			&\hspace{0,5cm}+ \varepsilon (1+2^m)\ell^{-1}\varphi_{x,y}(|{D_s}u_n(x,y)|)({D_s}u_n)^2(x,y).
		\end{aligned}
		$$
		Thus, by choosing $0<\varepsilon<\frac{\ell}{1+2^m}$ sufficiently small and using $(\varphi_3)$, we obtain $C:=C(\varepsilon,\ell, m)>0$ such that
		\begin{equation}\label{est-3.5}
			\begin{aligned}
				{\Phi_{x,y}(|{D_s}u_n(x,y)|)}&\leq \ell^{-1}{\varphi_{x,y}(|{D_s}u_n(x,y)|)({D_s}u_n)^2(x,y)}\\
				&\leq C\left(f_n(x,y)+{\Phi_{x,y}(|{D_s}u(x,y)|)}\right).
			\end{aligned}
		\end{equation}
		Hence, using that $\Phi_{x,y}$ is convex, Lemma \ref{M-N2} $(i)$ and \eqref{est-3.5}, we obtain
		$$\begin{aligned}
			\Phi_{x,y}&\big(|{D_s}u_n(x,y)-{D_s}u(x,y)|\big)|x-y|^{-N}\leq \Phi_{x,y}\left(\frac{2|{D_s}u_n(x,y)|+2|{D_s}u(x,y)|}{2}\right)|x-y|^{-N}\\
			&\leq \left(2^{m-1}\Phi_{x,y}(|{D_s}u_n(x,y)|)+2^{m-1} \Phi_{x,y}(|{D_s}u(x,y)|)\right)|x-y|^{-N} \\
			&\leq 2^{m-1}C\left(f_n(x,y)+{\Phi_{x,y}(|{D_s}u(x,y)|)}\right)|x-y|^{-N} + 2^{m-1}{\Phi_{x,y}(|{D_s}u(x,y)|)}|x-y|^{-N},
		\end{aligned}$$
		which implies that the sequence \eqref{seq1} has uniformly absolutely continuous integral. Therefore, \eqref{lim-3.1} holds by Vitali's Theorem.
	\end{proof}
	
	\begin{remark}
		We point out that in Theorem \ref{(S+)-2.0} we can consider the condition $(\varphi_3)$ with $\ell\geq 1$ and therefore the space $W^{s,\Phi_{x,y}}(\Omega)$ is non-reflexive. Also, it is not used that $t\mapsto\Phi_{x,y}(\sqrt{t})$ is convex. Thus, Theorem \ref{(S+)-2.0} can be seen as a generalization of the result obtained by Bahrouni et al. \cite[Lemma 3.4]{S. Bahrouni et.al}.
	\end{remark}
	In the sequel, we will give an alternative proof for the $(S_+)$-property assuming a stronger hypothesis than $(\varphi_2)$, namely:
	\begin{itemize}
		\item[$(\varphi_2)'$] $t\mapsto t\varphi_{x,y}(t)$ is strictly increasing.
	\end{itemize}

		\begin{theorem}\label{S+}
			Assume that $(\varphi_1), (\varphi_2)'$ and $(\varphi_3)$ hold. Then, $J_{s,\Phi}'$ satisfies $(S_+)$-property.
		\end{theorem}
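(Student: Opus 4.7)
The plan is to exploit the strict monotonicity in $(\varphi_2)'$ to promote the weak convergence and $\limsup$ assumption into pointwise convergence of the fractional gradients, and then close the argument via the Brezis-Lieb type Lemma and the modular-norm equivalence of Lemma \ref{M-N2}$(ii)$. As a first step, exactly as in the proof of Theorem \ref{(S+)-2.0}, I would combine the weak convergence (which implies $\langle J'_{s,\Phi}(u), u_n - u\rangle \to 0$) with the monotonicity of $J'_{s,\Phi}$ from Proposition \ref{monot-1}$(i)$ to obtain
$$\lim_{n\to\infty} \langle J'_{s,\Phi}(u_n) - J'_{s,\Phi}(u), u_n - u\rangle = 0.$$
Writing the integrand explicitly, this says that the non-negative (by \eqref{ms1}) sequence
$$A_n(x,y) := \bigl(\varphi_{x,y}(|D_s u_n|)D_s u_n - \varphi_{x,y}(|D_s u|)D_s u\bigr)(D_s u_n - D_s u)$$
satisfies $A_n(x,y)|x-y|^{-N} \to 0$ in $L^1(\Omega\times\Omega)$. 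I would then pass to a subsequence along which both $A_n(x,y) \to 0$ a.e.\ in $\Omega\times\Omega$ and, by the compact embedding in Remark \ref{embed-comp-part}, $u_n(x)\to u(x)$ a.e.\ in $\Omega$.

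The crucial new step, where $(\varphi_2)'$ enters, is to upgrade the pointwise vanishing of $A_n$ into the pointwise convergence $D_s u_n(x,y)\to D_s u(x,y)$ a.e. Fixing $(x_0,y_0)$ with $A_n(x_0,y_0)\to 0$ and arguing by contradiction: if $D_s u_{n_k}(x_0,y_0)\to a \in \mathbb{R}$ with $a\neq D_s u(x_0,y_0)$, then continuity of $\tau\mapsto \tau\varphi_{x_0,y_0}(|\tau|)$ combined with its strict monotonicity on $\mathbb{R}$ (inherited from $(\varphi_2)'$ by odd reflection) forces $A_{n_k}(x_0,y_0)$ to a strictly positive limit; while $|D_s u_{n_k}(x_0,y_0)|\to\infty$ would force $A_{n_k}(x_0,y_0)\to +\infty$ via the growth at infinity in $(\varphi_1)$. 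Either alternative contradicts $A_{n_k}(x_0,y_0)\to 0$, proving the claim.

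To finish, I would recycle the integrand estimate \eqref{est-3.5} already derived inside the proof of Theorem \ref{(S+)-2.0}, namely $\Phi_{x,y}(|D_s u_n|)\leq C\bigl(A_n(x,y)+\Phi_{x,y}(|D_s u|)\bigr)$. Since $A_n|x-y|^{-N}\to 0$ in $L^1$ and $\Phi_{x,y}(|D_s u|)|x-y|^{-N}\in L^1(\Omega\times\Omega)$, the family $\{\Phi_{x,y}(|D_s u_n|)|x-y|^{-N}\}$ has uniformly absolutely continuous integrals; together with the a.e.\ convergence of $D_s u_n$ to $D_s u$ just established, Vitali's Theorem yields $J_{s,\Phi}(u_n)\to J_{s,\Phi}(u)$. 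Applying the Brezis-Lieb type Lemma (permissible thanks to $u_n\to u$ a.e.\ in $\Omega$) gives $J_{s,\Phi}(u_n-u)\to 0$, and Lemma \ref{M-N2}$(ii)$ then produces $\|u_n-u\|\to 0$. A standard subsequence-of-subsequence argument promotes this to convergence of the full original sequence.

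The main obstacle is precisely the pointwise upgrade in the second paragraph: without strict monotonicity the scalar inversion fails, because $\tau\mapsto\tau\varphi_{x,y}(|\tau|)$ could be constant on an interval, and pointwise vanishing of $A_n$ would no longer determine $D_s u_n$ pointwise. This is exactly why the proof of Theorem \ref{(S+)-2.0}, which assumes only $(\varphi_2)$, had to operate entirely at the level of the modular rather than passing through an a.e.\ limit of fractional gradients.
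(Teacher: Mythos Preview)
Your proof is correct and follows the same overall strategy as the paper: reduce to $\langle J'_{s,\Phi}(u_n)-J'_{s,\Phi}(u),u_n-u\rangle\to 0$, use the strict monotonicity in $(\varphi_2)'$ to extract a.e.\ convergence of the fractional gradients $D_su_n$, and control the integrand via estimate \eqref{est-3.5}. The only genuine difference is in the closing argument. The paper, after obtaining $f_n\to 0$ in $L^1(\Omega\times\Omega,\mathrm{d}\mu)$, passes to a subsequence with an $L^1$-dominating function $g\geq |f_n|$, bounds $\Phi_{x,y}(|D_su_n-D_su|)$ pointwise by $C(g+\Phi_{x,y}(|D_su|))\in L^1$, and applies Lebesgue's Dominated Convergence Theorem directly to $J_{s,\Phi}(u_n-u)$. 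You instead apply Vitali's Theorem to $J_{s,\Phi}(u_n)$ and then invoke the Brezis--Lieb type Lemma to deduce $J_{s,\Phi}(u_n-u)\to 0$. The paper's route is slightly more economical (no detour through Brezis--Lieb), while yours ties the argument to the other convergence tools developed in Section~\ref{section 3}; incidentally, once you have $u_n\rightharpoonup u$ and $J_{s,\Phi}(u_n)\to J_{s,\Phi}(u)$, Theorem~\ref{unifor-conv}(ii) would also close the proof immediately. Finally, your hands-on verification of the pointwise convergence of $D_su_n$ reproduces the content of \cite[Lemma~6]{DalMaso}, which the paper simply cites.
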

		\begin{proof}
			Using the same techniques as in the proof of Theorem \ref{(S+)-2.0}, we conclude that the sequence $\{f_{n}(x,y)\}_{n\in\mathbb{N}}$ defined by
			$$f_n(x,y):=\left(\varphi_{x,y}(|{D_s}u_n(x,y)|){D_s}u_n(x,y)-\varphi_{x,y}(|{D_s}u(x,y)|){D_s}u(x,y)\right)\left({{D_s}u_n(x,y)-{D_s}u(x,y)}\right)$$
			converges to $0$ in $L^1(\Omega\times \Omega,\mathrm{d}\mu)$. On the other hand, since $(\varphi_2)'$ holds, the inequality \eqref{ms1} becomes
			\begin{equation}\label{ms}
				(t\varphi_{x,y}(|t|) - s\varphi_{x,y}(|s|)) (t-s)>0,\quad \mbox{for all} \; (x,y)\in \Omega\times \Omega \; \mbox{and} \; t\neq s.
			\end{equation}
			This means that $t\mapsto t\varphi_{x,y}(|t|)$ is strictly monotone. Then, by \cite[Lemma 6]{DalMaso}, we conclude that ${D_s}u_n(x,y)\rightarrow {D_s}u(x,y),~\mu$-a.e. in $\Omega\times \Omega$. Moreover,  there exist $g\in L^1(\Omega\times \Omega,\mathrm{d}\mu)$ and a subsequence, still denoted by $\{f_n\}_{n\in\mathbb{N}}$, such that $|f_n|\leq g,~\mu$-a.e. in $\Omega\times\Omega$.
			Thus, by inequality \eqref{est-3.5}, we obtain $C:=C(\varepsilon,\ell,m)>0$ such that
			\begin{equation}\label{est-2}
				\begin{aligned}
					{\Phi_{x,y}(|{D_s}u_n(x,y)|)}&\leq\ell^{-1}{\varphi_{x,y}(|{D_s}u_n(x,y)|)({D_s}u_n)^2(x,y)}\\
					&\leq C\left(g(x,y)+{\Phi_{x,y}(|{D_s}u(x,y)|)}\right)\in L^1(\Omega\times\Omega,\mathrm{d}\mu).
				\end{aligned}
			\end{equation}
			By using the convexity of $\Phi_{x,y}$, Lemma \ref{M-N2} $(i)$ and \eqref{est-2}, we conclude that
			$$
			\begin{aligned}
				\Phi_{x,y}&\big(|{D_s}u_n(x,y)-{D_s}u(x,y)|\big)\leq{\Phi_{x,y}\left(\frac{2|{D_s}u_n(x,y)|+2|{D_s}u(x,y)|}{2}\right)}\\
				&\leq 2^{m-1}{\Phi_{x,y}(|{D_s}u_n(x,y)|)}+2^{m-1}{\Phi_{x,y}(|{D_s}u(x,y)|)}\\
				&\leq 2^{m-1}C\left(g(x,y)+{\Phi_{x,y}(|{D_s}u(x,y)|)}\right) + 2^{m-1}{\Phi_{x,y}(|{D_s}u(x,y)|)}\in L^1(\Omega\times\Omega,\mathrm{d}\mu).
			\end{aligned}
			$$
			Consequently, by Lebesgue's Dominated Convergence Theorem, there holds
			$$J_{s,\Phi}(u_n-u)=\int_\Omega\int_\Omega\Phi_{x,y}(|{D_s}u_n(x,y)-{D_s}u(x,y)|)\; d\mu\rightarrow 0.$$
			Therefore, by Proposition \ref{M-N2} $(ii)$, we have $\|u_n-u\|\rightarrow 0$, i.e., $u_n\rightarrow u$ strongly in $W_0^{s,\Phi_{x,y}}(\Omega)$. 
		\end{proof}
	
	The next result characterizes the strong convergence in the space $W_0^{s,\Phi_{x,y}}(\Omega)$ under the assumption $(\varphi_2)'$.
	\begin{proposition} 
		Assume that $(\varphi_1), (\varphi_2)'$ and $(\varphi_3)$ hold. Let $\{u_n\}_{n\in\mathbb{N}}$ be a sequence in $W_0^{s,\Phi_{x,y}}(\Omega)$. Then, $u_n \rightarrow u$ in $W_0^{s,\Phi_{x,y}}(\Omega)$ if and only if
		\begin{equation}\label{Cconv}
			\lim_{n\rightarrow +\infty} \left\langle J_{s,\Phi}'(u_n) - J_{s,\Phi}'(u), u_n - u\right\rangle = 0
		\end{equation}
	\end{proposition}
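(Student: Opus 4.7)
The plan is to handle the two implications separately. The forward implication is essentially a continuity argument: if $u_n\to u$ in $W_0^{s,\Phi_{x,y}}(\Omega)$, then by Lemma \ref{derivative} the map $J'_{s,\Phi}$ is continuous (indeed $J_{s,\Phi}\in C^1$), so $J'_{s,\Phi}(u_n)\to J'_{s,\Phi}(u)$ in the dual, and the duality pairing estimate
$$
|\langle J'_{s,\Phi}(u_n)-J'_{s,\Phi}(u),u_n-u\rangle|\le \|J'_{s,\Phi}(u_n)-J'_{s,\Phi}(u)\|_{\ast}\,\|u_n-u\|\longrightarrow 0
$$
gives \eqref{Cconv} directly. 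This is a routine step and should take only a few lines.

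The substantive content is the reverse implication. Assume that \eqref{Cconv} holds. Following the strategy of Theorem \ref{S+}, I would introduce
$$
f_n(x,y):=\bigl(\varphi_{x,y}(|D_s u_n|)D_s u_n-\varphi_{x,y}(|D_s u|)D_s u\bigr)\bigl(D_s u_n-D_s u\bigr),
$$
which, by the monotonicity inequality \eqref{ms1} (in fact, by the strict version \eqref{ms} available under $(\varphi_2)'$), is nonnegative $\mu$-a.e. By Lemma \ref{derivative}, its integral against $\mathrm{d}\mu$ is precisely $\langle J'_{s,\Phi}(u_n)-J'_{s,\Phi}(u),u_n-u\rangle$, so the hypothesis \eqref{Cconv} translates into $f_n\to 0$ in $L^1(\Omega\times\Omega,\mathrm{d}\mu)$. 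Passing to a subsequence, I get $f_n\to 0$ $\mu$-a.e.\ and a $\mu$-integrable dominating function $g$ with $|f_n|\le g$.

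Next, invoking the strict monotonicity provided by $(\varphi_2)'$, the pointwise version of Dal Maso's lemma (\cite[Lemma 6]{DalMaso}), used exactly as in the proof of Theorem \ref{S+}, gives $D_su_n(x,y)\to D_su(x,y)$ $\mu$-a.e. Moreover, the Young-inequality argument leading to \eqref{est-3.5} transfers verbatim and yields, with $g$ in place of $f_n$,
$$
\Phi_{x,y}(|D_s u_n|)\le C\bigl(g(x,y)+\Phi_{x,y}(|D_s u|)\bigr)\in L^1(\Omega\times\Omega,\mathrm{d}\mu),
$$
and then the convexity-based estimate already used in Theorem \ref{S+} domainates $\Phi_{x,y}(|D_s u_n-D_s u|)$ by an $L^1(\Omega\times\Omega,\mathrm{d}\mu)$ function. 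The Lebesgue dominated convergence theorem then gives $J_{s,\Phi}(u_n-u)\to 0$ along the subsequence, whence $\|u_n-u\|\to 0$ by Lemma \ref{M-N2}(ii).

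The only genuine obstacle is that the passage through Dal Maso's lemma was done along a subsequence, so I only obtain strong convergence along a subsequence. To promote this to full-sequence convergence, I would apply the standard Urysohn subsequence principle: since every subsequence of $\{u_n\}$ still satisfies \eqref{Cconv} (the property is inherited by subsequences), the argument above extracts from it a further subsequence converging strongly to $u$; thus every subsequence has a sub-subsequence converging to $u$, forcing the whole sequence to converge. This concludes the plan.
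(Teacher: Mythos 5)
Your proposal is correct and follows exactly the route the paper takes: the forward implication is the norm--duality estimate together with the continuity of $J'_{s,\Phi}$ from Lemma \ref{derivative}, and the converse repeats the machinery of Theorem \ref{S+} (nonnegativity of $f_n$ via \eqref{ms}, $L^1(\mathrm{d}\mu)$-convergence of $f_n$ from \eqref{Cconv}, Dal Maso's lemma to get $\mu$-a.e.\ convergence of $D_s u_n$, the domination estimate \eqref{est-3.5}, and dominated convergence). The paper's written proof is a one-line pointer to that theorem; your explicit inclusion of the Urysohn subsequence principle is a good clarification of a step the paper leaves implicit.
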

	\begin{proof}
		If $u_n \rightarrow u$, then by Proposition \ref{derivative} the limit \eqref{Cconv} holds. Conversely, assuming \eqref{Cconv} and arguing as in proof Theorem \ref{S+}, we obtain the desired result. 
	\end{proof}
	
	Next, we we introduce another monotonicity result in the presence of hypothesis $(\varphi_2)'$.
	\begin{proposition}\label{homeomorphism}
		Assume that $(\varphi_1), (\varphi_2)'$ and $(\varphi_3)$ hold. Then, $J'_{s,\Phi}$ is a homeomorphism strictly monotone.
	\end{proposition}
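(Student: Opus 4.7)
My plan is to verify in order the four ingredients that together yield a strictly monotone homeomorphism: strict monotonicity, injectivity, surjectivity, and continuity of the inverse. Most of the building blocks are already in place: by Lemma \ref{derivative} the operator $J'_{s,\Phi}$ is continuous (in particular hemicontinuous), Proposition \ref{monot-1} says it is bounded, coercive and monotone, and Theorem \ref{S+} provides the $(S_+)$-property under the standing hypotheses.

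First, I would upgrade monotonicity to strict monotonicity. Under $(\varphi_2)'$ the pointwise inequality \eqref{ms} becomes strict whenever the two arguments differ, so if $u\neq v$ in $W_0^{s,\Phi_{x,y}}(\Omega)$ there is a subset of $\Omega\times\Omega$ of positive $\mu$-measure where $D_su(x,y)\neq D_sv(x,y)$; integrating the strict version of \eqref{ms} over this set shows $\langle J'_{s,\Phi}(u)-J'_{s,\Phi}(v),u-v\rangle>0$. Injectivity is then immediate.

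Surjectivity I would deduce from the Browder--Minty theorem (e.g.\ \cite[Theorem 26.A]{Zeidler}): $W_0^{s,\Phi_{x,y}}(\Omega)$ is reflexive (Theorem \ref{unifor-conv}), and $J'_{s,\Phi}$ is monotone, coercive and hemicontinuous by what was recalled above, so it maps onto $(W_0^{s,\Phi_{x,y}}(\Omega))^\ast$. Combined with strict monotonicity, this gives a bijection whose direct continuity is Lemma \ref{derivative}.

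The main task, and the step that really uses the full strength of the earlier sections, is continuity of $(J'_{s,\Phi})^{-1}$. I would take a sequence $f_n:=J'_{s,\Phi}(u_n)\to f=J'_{s,\Phi}(u)$ in the dual and show $u_n\to u$ strongly. Coercivity forces $\{u_n\}$ to be bounded, for otherwise $\langle J'_{s,\Phi}(u_n),u_n\rangle/\|u_n\|\to\infty$, contradicting $\langle f_n,u_n\rangle/\|u_n\|\leq \|f_n\|_\ast$. By reflexivity, up to a subsequence $u_n\rightharpoonup \bar u$. Writing
\[
\langle J'_{s,\Phi}(u_n),u_n-\bar u\rangle=\langle f_n-f,u_n-\bar u\rangle+\langle f,u_n-\bar u\rangle,
\]
both terms on the right vanish in the limit (the first by $f_n\to f$ and boundedness of $u_n-\bar u$, the second by weak convergence), so $\limsup_n\langle J'_{s,\Phi}(u_n),u_n-\bar u\rangle\leq 0$. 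The $(S_+)$-property of Theorem \ref{S+} then yields $u_n\to\bar u$ strongly; continuity of $J'_{s,\Phi}$ gives $J'_{s,\Phi}(\bar u)=f=J'_{s,\Phi}(u)$ and strict monotonicity forces $\bar u=u$. Since the limit is the same for every subsequence, the whole sequence converges, proving that $(J'_{s,\Phi})^{-1}$ is continuous. The main obstacle is precisely this last step: without the $(S_+)$-property one only gets weak convergence from the dual convergence, so the payoff of the earlier work on the $(S_+)$-property is decisive here.
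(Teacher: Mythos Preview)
Your proposal is correct and follows essentially the same route as the paper: strict monotonicity from the pointwise inequality \eqref{ms}, bijectivity via the Minty--Browder theorem (combining coercivity, monotonicity and hemicontinuity from Proposition \ref{monot-1} and Lemma \ref{derivative}), and continuity of the inverse by first extracting a weakly convergent subsequence and then upgrading to strong convergence through the $(S_+)$-property of Theorem \ref{S+}. The only cosmetic differences are that you deduce boundedness of $\{u_n\}$ directly from coercivity whereas the paper invokes the boundedness of $(J'_{s,\Phi})^{-1}$ provided by \cite[Theorem 26.A]{Zeidler}, and your decomposition for passing to the limit uses $\langle J'_{s,\Phi}(u_n),u_n-\bar u\rangle$ rather than $\langle J'_{s,\Phi}(u_n)-J'_{s,\Phi}(\bar u),u_n-\bar u\rangle$; both lead to the same conclusion.
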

	\begin{proof}
		The strict monotonicity of $J'_{s,\Phi}$ follows from \eqref{ms}. Thus, by Proposition \ref{monot-1} ($i$) and Minty--Browder Theorem \cite[Theorem 26.A]{Zeidler}, $J'_{s,\Phi}$ is invertible and $(J'_{s,\Phi})^{-1}$ is strictly monotone and bounded. Therefore, in order to complete the proof of ($ii$) we only need to show that $(J'_{s,\Phi})^{-1}$ is continuous. For this purpose, let $\{g_n\}_{n\in\mathbb{N}}$ be a sequence such that $g_n \rightarrow g$ strongly in $\big(W_0^{s,\Phi_{x,y}}(\Omega)\big)^\ast$. By taking $u_n=(J'_{s,\Phi})^{-1}(g_n)$ and $u=(J'_{s,\Phi})^{-1}(g)$, it follows from the strong convergence of $\{g_n\}_{n\in\mathbb{N}}$ and the boundedness of $(J'_{s,\Phi})^{-1}$ that $\{u_n\}_{n\in\mathbb{N}}$ is bounded in $W_0^{s,\Phi_{x,y}}(\Omega)$. Thus, up to subsequence $u_n \rightharpoonup u_0$ in $W_0^{s,\Phi_{x,y}}(\Omega)$. Consequently,
		$$
		\begin{aligned}
			\lim_{n\rightarrow +\infty} \langle J'_{s,\Phi}(u_n)-J'_{s,\Phi}(u_0), u_n - u_0\rangle &= \lim_{n\rightarrow +\infty} \left( \langle J'_{s,\Phi}(u_n) - g, u_n - u_0\rangle + \langle g - J'_{s,\Phi}(u_0), u_n -u_0 \rangle \right)\\ 
			&= \lim_{n\rightarrow +\infty} \langle J'_{s,\Phi}(u_n) - g, u_n - u_0\rangle + \lim_{n\rightarrow +\infty} \langle g - J'_{s,\Phi}(u_0), u_n -u_0 \rangle\\
			& = 0.
		\end{aligned}
		$$
		This fact jointly with Theorem \ref{S+} imply that $u_n \rightarrow u_0$. Hence, by the continuity of the operator $J'_{s,\Phi}$ we obtain
		$$J'_{s,\Phi}(u_0)= \lim_{n\rightarrow +\infty} J'_{s,\Phi}(u_n) = \lim_{n\rightarrow +\infty} g_n = g = J'_{s,\Phi}(u),$$ 
		i.e., $u=u_0$. 
		Therefore, $(J'_{s,\Phi})^{-1}$ is continuous.
	\end{proof}
	
	\begin{remark}
		Let $\mathcal{J}_{s,\Phi}: W^{s,\Phi_{x,y}}(\Omega) \rightarrow \mathbb{R}$ be the modular function defined by
		$$\mathcal{J}_{s,\Phi}(u)= J_{s,\Phi}(u) + J_{\widehat{\Phi}}(u).$$
		In light of \cite[Proposition 2.1]{Azroul1}, 
		$$\|u\|_{\mathcal{J}_{s,\Phi}}:=\inf\left\{\lambda>0: \mathcal{J}_{s,\Phi}\left(\frac{u}{\lambda}\right)\leq 1\right\}$$
		is an equivalent norm on $W^{s,\Phi_{x,y}}(\Omega)$.
		Then, the above results still hold true if we replace $J_{s,\Phi}$ by $\mathcal{J}_{s,\Phi}$.
	\end{remark}
	
	\section{Application to a nonlocal fractional type problem}\label{Existence result}

	In this section, we investigate the existence of nontrivial solution for following class of fractional type problems 
	\begin{equation}\label{P}
		\left\{
		\begin{array}{ll}
			(-\Delta)_{\Phi_{x,y}}^s u = f(x,u),& \mbox{in }\Omega\\
			u=0,& \mbox{on }\mathbb{R}^N\setminus \Omega,
		\end{array}
		\right.
	\end{equation}
	where $N\geq 2$, $\Omega\subset\mathbb{R}^{N}$ is a bounded domain with Lipschitz boundary $\partial \Omega$, $(-\Delta)_{\Phi_{x,y}}^s$ is the so called fractional $(s,\Phi_{x,y})$-Laplacian operator defined by
	\begin{eqnarray}\label{a-frac}
		(-\Delta)_{\Phi_{x,y}}^s u(x)\displaystyle:= 2\lim_{\varepsilon\rightarrow 0} \int_{\mathbb{R}^N\setminus B_{\varepsilon}(x)} \varphi_{x,y}\left(\frac{|u(x)-u(y)|}{|x-y|^s}\right) \frac{u(x)-u(y)}{|x-y|^s}\frac{\mathrm{d}y}{|x-y|^{N+s}},
	\end{eqnarray}
	where $s \in (0,1)$,  $\Phi_{x,y}(t)=\int_0^{|t|} \tau \varphi_{x,y}(\tau)\,\mathrm{d}\tau$ is a generalized $N$-function and the nonlinearity $f:\Omega \times \mathbb{R} \rightarrow \mathbb{R}$ is a Carath\'{e}odory function that satisfies the following conditions:
	
	 \begin{itemize} 
	  \item[$(f_1)$] there exist a constant $C>0$ and $\Psi \in \mathcal{N}(\Omega)$ satisfying $\Psi \ll \widehat{\Phi}_s^\ast$  such that
	$$|f(x,t)|\leq C(1+ |t|\psi(x,|t|)), \quad x\in\Omega, \; t\in\mathbb{R},$$
	where $\Psi(x,t)=\displaystyle\int_0^{|t|} \tau\psi(x,\tau)\;\mathrm{d}\tau$ satisfies
	$$m<\ell_\Psi\leq \frac{t^2\psi(x,t)}{\Psi(x,t)}\leq m_\Psi<\infty, \quad x\in \Omega, \; t>0,$$
	and
	\begin{equation}\label{psi2}
		0<C_1\leq\Psi(x,1)\leq C_2<+\infty, \quad x\in\Omega,
	\end{equation}
for some constants $C_1$ and $C_2$.
	
	\item[$(f_2)$] there exists $\Gamma \in \mathcal{N}(\Omega)$ given by
	$\Gamma(x,t)=\int_0^{|t|} \tau\gamma(x,\tau)\; \mathrm{d}\tau$,
	where $\gamma:\Omega \times (0,+\infty) \rightarrow [0,+\infty)$ is a Carath\'{e}odory function which satisfies
	$${\frac{N}{\ell}<\ell_\Gamma}\leq\frac{t^2\gamma(x,t)}{\Gamma(x,t)}\leq m_\Gamma<\infty, \quad x\in \Omega, \; t>0$$
	and
	\begin{equation}\label{gamma2}
		\sup_{x\in\Omega} \Gamma(x,1)<\infty,
	\end{equation}
	such that
	$$\Gamma\left(x,\frac{F(x,t)}{|t|^\ell}\right)\leq C \overline{F}(x,t), \quad x\in\Omega, \;|t|\geq R,$$
	where $C, R$ are positive constant,
	$$F(x,t):=\int_0^t f(x,\tau)\; \mathrm{d}\tau\quad \mbox{and} \quad \overline{F}(x,t):= tf(x,t)- mF(x,t), \quad x \in \Omega, \; t\in\mathbb{R};$$
	
	\item[$(f_3)$] $\displaystyle\lim_{t\rightarrow +\infty} \frac{f(x,t)}{|t|^{m-1}}=\infty,$ uniformly for $x\in \Omega$;
	
	\item[$(f_4)$] $\displaystyle\lim_{t\rightarrow 0} \frac{f(x,t)}{|t|\widehat{\varphi}(x,|t|)} < \frac{1}{\lambda_1}$, uniformly for $x\in \Omega$, where $\lambda_1$ was introduced in \eqref{poincareconstant}.
	
\end{itemize}

\vspace{0,3cm}

Now, we list some remarks on our assumptions.

	\begin{remark}
		The assumption $(f_2)$ is a type of nonquadraticity condition at infinity, which was first introduced by Costa and Magalh\~{a}es, \cite{Costa and Magalhaes}, for the Laplace operator, i.e, when $\ell=m=2$. It was required that
		$$\liminf_{|t|\rightarrow +\infty}\frac{\overline{F}(x,t)}{|t|^\sigma}\geq a>0,$$
		holds for some $\sigma>0$. Such condition plays an important role in proving the compactness result, such as the Cerami condition.
	\end{remark}
	
	\begin{remark}
		If $\Psi$ and $\Gamma$ satisfy the conditions \eqref{psi2} and \eqref{gamma2} respectively, then by using $\Delta_2$-condition it is possible to prove that $\Psi$ and $\Gamma$ are bounded for every $k>0$, and consequently, $\widetilde{\Psi}(x,k)$ and $\widetilde{\Gamma}(x,k)$ are also bounded. 
	\end{remark}

	\begin{remark}
		The condition $m<\ell_\Gamma$ in $(f_2)$ implies that $\widehat{\Phi}\ll\Psi$. Indeed, by Lemmas \ref{M-N1} $(i)$ and \ref{M-N critical} $(i)$,
		$$\lim_{t\rightarrow+\infty} \frac{\widehat{\Phi}(x,kt)}{\Psi(x,t)}\leq \frac{\widehat{\Phi}(x,k)}{\Psi(x,1)}\lim_{t\rightarrow+\infty}\frac{t^m}{t^{\ell_\Gamma}} =0, \quad \mbox{uniformly for }x\in \Omega.$$
	\end{remark}
	
	\begin{remark}
		Note that assumption $(f_3)$ implies that
		$$\lim_{|t|\rightarrow +\infty} \frac{\ell F(x,t)}{|t|^\ell}=\lim_{|t|\rightarrow +\infty}\frac{f(x,t)}{|t|^{\ell -1}}=+\infty.$$
		Thus, in view of $(f_2)$ and fact $\Gamma \in \mathcal{N}(\Omega)$, it follows that
		$$\lim_{|t|\rightarrow +\infty}\Gamma\left(x, \frac{F(x,t)}{|t|^\ell}\right)=+\infty \quad \mbox{and} \quad \lim_{|t|\rightarrow +\infty} \overline{F}(x,t)=+\infty, \quad \mbox{for}\; x\in\Omega.$$
	\end{remark} 
	\vspace{0.5cm}

	In order to define the notion of weak solution for problem \eqref{P}, we need to require a symmetry assumption in $x$ and $y$ for the function $\varphi(x,y,t)$, precisely,
	\begin{equation*}
		\varphi(x,y,t)=\varphi(y,x,t), \quad \mbox{for all}\hspace{0.2cm} (x,y)\in\Omega\times\Omega \hspace{0.2cm} \mbox{and} \hspace{0.2cm} t\geq0.
	\end{equation*}
	\begin{definition}
		A function $u \in W_0^{\Phi_{x,y}}(\Omega)$ is said to be a weak solution for Problem \eqref{P} if satisfies
		$$\int_\Omega\int_\Omega \varphi_{x,y}\left(|D_su(x,y)|\right)  D_su(x,y) D_sv(x,y) \;\mathrm{d}\mu =\int_{\Omega} f(x,u)v \;\mathrm{d}x, $$
		for all $v \in W_0^{\Phi_{x,y}}(\Omega)$.
	\end{definition}

The following result is an immediate consequence of the Proposition \ref{homeomorphism}.
\begin{proposition}
	Assume that $(\varphi_1), (\varphi_2)'$ and $(\varphi_3)$ hold. If $f(x,t)=f(x)$ and $f\in L_{\widetilde{(\widehat{\Phi}_{s}^\ast)}_x}(\Omega)$, then Problem \eqref{P} has a unique weak solution.
\end{proposition}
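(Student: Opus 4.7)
The plan is to recast the weak formulation as an operator equation in the dual space and then invoke Proposition \ref{homeomorphism}. To this end, define the linear functional $T_f\colon W_0^{s,\Phi_{x,y}}(\Omega)\to\mathbb{R}$ by
$$
T_f(v):=\int_{\Omega} f(x)\,v(x)\,\mathrm{d}x.
$$
A function $u\in W_0^{s,\Phi_{x,y}}(\Omega)$ is a weak solution of \eqref{P} with right-hand side $f(x)$ if and only if $J'_{s,\Phi}(u)=T_f$ in $\bigl(W_0^{s,\Phi_{x,y}}(\Omega)\bigr)^{\ast}$, by virtue of Lemma \ref{derivative}.

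The first task is to verify that $T_f$ belongs to the dual space. Since $f\in L_{\widetilde{(\widehat{\Phi}_{s}^\ast)}_x}(\Omega)$, the H\"older-type inequality in Musielak-Orlicz spaces yields
$$
|T_f(v)|\le 2\,\|f\|_{\widetilde{(\widehat{\Phi}_{s}^\ast)}_x}\,\|v\|_{\widehat{\Phi}_{s,x}^\ast},
$$
for every $v\in L_{\widehat{\Phi}_{s,x}^\ast}(\Omega)$. The critical continuous embedding provided by Proposition \ref{imbed-critical-bounded}~$(i)$ gives a constant $C>0$ such that $\|v\|_{\widehat{\Phi}_{s,x}^\ast}\le C\,\|v\|$ for all $v\in W_0^{s,\Phi_{x,y}}(\Omega)$, and hence $T_f\in \bigl(W_0^{s,\Phi_{x,y}}(\Omega)\bigr)^{\ast}$ with norm controlled by $\|f\|_{\widetilde{(\widehat{\Phi}_{s}^\ast)}_x}$.

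Having realized $T_f$ as an element of the dual, I would conclude by invoking Proposition \ref{homeomorphism}: under hypotheses $(\varphi_1)$, $(\varphi_2)'$ and $(\varphi_3)$ the operator $J'_{s,\Phi}$ is a strictly monotone homeomorphism between $W_0^{s,\Phi_{x,y}}(\Omega)$ and its dual. Consequently there exists a unique $u\in W_0^{s,\Phi_{x,y}}(\Omega)$ with $J'_{s,\Phi}(u)=T_f$, which is the desired unique weak solution.

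In this plan nothing is genuinely hard, since the heavy lifting (strict monotonicity, coercivity via Proposition \ref{monot-1}, and continuity of the inverse via the $(S_+)$-property in Theorem \ref{S+}) has already been carried out to prove Proposition \ref{homeomorphism}. The only point requiring care is ensuring that $T_f$ is well-defined and continuous on $W_0^{s,\Phi_{x,y}}(\Omega)$, which is precisely the role of the hypothesis $f\in L_{\widetilde{(\widehat{\Phi}_{s}^\ast)}_x}(\Omega)$ together with the critical embedding.
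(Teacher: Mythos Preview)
Your proposal is correct and follows essentially the same approach as the paper: use H\"older's inequality together with the critical embedding of Proposition \ref{imbed-critical-bounded}~$(i)$ to show that $v\mapsto\int_\Omega f v\,\mathrm{d}x$ defines an element of $\bigl(W_0^{s,\Phi_{x,y}}(\Omega)\bigr)^\ast$, and then invoke the bijectivity of $J'_{s,\Phi}$ from Proposition \ref{homeomorphism}. The paper's proof is simply a more terse version of what you wrote.
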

\begin{proof}
	By H\"{o}lder's inequality and Proposition \ref{imbed-critical-bounded} $(i)$, one can see that
	$$\langle f, v\rangle:= \int_{\Omega} f(x)v(x) \; \mathrm{d}x, \quad v\in W_0^{s,\Phi_{x,y}}(\Omega),$$ 
	defines a continuous linear functional on $W_0^{\Phi_{x,y}}(\Omega)$, i.e., $f \in \big(W_0^{s,\Phi_{x,y} }(\Omega)\big)^\ast$. Since $J_{s,\Phi}'$ is bijective by Proposition \ref{homeomorphism}, it follows that \eqref{P} has a unique weak solution.
\end{proof}

Our main existence result can be stated as follows.
\begin{theorem}\label{theorem1}
	Assume that $(\varphi_1) - (\varphi_3)$ and $(f_1) - (f_4)$ hold. Then, Problem \eqref{P} has at least one nontrivial weak solution. 
\end{theorem}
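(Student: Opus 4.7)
The plan is to apply the Mountain Pass Theorem (in its Cerami form) to the energy functional $I \colon W_0^{s,\Phi_{x,y}}(\Omega) \to \mathbb{R}$ defined by
$$I(u) = J_{s,\Phi}(u) - \int_\Omega F(x,u)\,\mathrm{d}x,$$
whose critical points are precisely the weak solutions of \eqref{P}. I would first verify that $I \in C^1(W_0^{s,\Phi_{x,y}}(\Omega),\mathbb{R})$: the smoothness of $J_{s,\Phi}$ was already established in Lemma \ref{derivative}, while the smoothness of the Nemytskii part follows from the growth bound $(f_1)$ combined with the continuous embedding into $L_{\widehat{\Phi}_{s,x}^\ast}(\Omega)$ and the compact embedding into $L_{\Psi_x}(\Omega)$ furnished by Proposition \ref{imbed-critical-bounded}.

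Next I would check the Mountain Pass geometry. Condition $(f_4)$ combined with $(f_1)$ delivers, for each $\varepsilon>0$, an estimate of the form $F(x,t) \leq \frac{1-\varepsilon}{\lambda_1}\widehat{\Phi}_x(|t|) + C_\varepsilon \Psi(x,|t|)$, so that the Poincaré-type inequality \eqref{poincareconstant} together with Lemmas \ref{M-N1}(ii) and \ref{M-N2}(ii) and the subcritical embedding yields $\rho,\alpha>0$ with $I(u)\geq\alpha$ on the sphere $\|u\|=\rho$. Condition $(f_3)$ forces $F(x,t)/|t|^m \to \infty$ uniformly in $x$; hence for any fixed nonzero $w\in W_0^{s,\Phi_{x,y}}(\Omega)$ and $t>0$ large, the upper bound $J_{s,\Phi}(tw)\leq t^m \xi_0^+(\|w\|)$ coupled with Fatou's lemma produces $I(tw)\to -\infty$, furnishing some $e$ with $\|e\|>\rho$ and $I(e)<0$.

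The heart of the matter is verifying the Cerami condition. For a Cerami sequence $\{u_n\}$ the identity
$$mI(u_n)-\langle I'(u_n),u_n\rangle = \Big[mJ_{s,\Phi}(u_n)-\int_\Omega\int_\Omega \varphi_{x,y}(|D_s u_n|)(D_s u_n)^2\,\mathrm{d}\mu\Big] + \int_\Omega \overline{F}(x,u_n)\,\mathrm{d}x,$$
together with the non-negativity of the bracket (which follows from $(\varphi_3)$), yields the a priori bound $\int_\Omega \overline{F}(x,u_n)\,\mathrm{d}x \leq C$. Assuming by contradiction that $\|u_n\|\to\infty$, set $v_n = u_n/\|u_n\|$, so that along a subsequence $v_n \rightharpoonup v$. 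If $v\not\equiv 0$, on the set $\{v\neq 0\}$ one has $|u_n(x)|\to\infty$, whence $(f_3)$ forces $F(x,u_n)/|u_n|^\ell \to \infty$ pointwise, and Fatou combined with $(f_2)$ yields $\int_\Omega \Gamma(x,F(x,u_n)/|u_n|^\ell)\,\mathrm{d}x \to \infty$, contradicting the a priori bound. If $v\equiv 0$, I would employ a Jeanjean-type device, choosing $t_n\in[0,1]$ that maximizes $I(tu_n)$ over $t\in[0,1]$, and exploit the strong $L_{\Psi_x}$-convergence $v_n\to 0$ together with $(f_1)$ and the identity above applied at $t_n u_n$ to derive an analogous contradiction.

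Once boundedness of $\{u_n\}$ is secured, reflexivity extracts $u_n \rightharpoonup u$; the compactness of the embedding into $L_{\Psi_x}(\Omega)$ and the growth control $(f_1)$ give $\int_\Omega f(x,u_n)(u_n-u)\,\mathrm{d}x \to 0$, so $\langle J'_{s,\Phi}(u_n),u_n-u\rangle \to 0$, and the $(S_+)$-property proved in Theorem \ref{(S+)-2.0} upgrades weak to strong convergence. The main obstacle I anticipate is the case $v \equiv 0$ in the boundedness argument, which must be calibrated carefully to the non-homogeneous structure of $J_{s,\Phi}$ and to the growth bound provided by the auxiliary $N$-function $\Gamma$; the hypotheses $m<\ell_\Gamma$ and the strict subcriticality $\Psi \ll \widehat{\Phi}_s^\ast$ are precisely what make both cases go through. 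The Mountain Pass Theorem then produces a nontrivial critical point at the min-max level $c\geq\alpha>0$, which is the desired weak solution.
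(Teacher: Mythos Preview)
Your overall architecture matches the paper's: verify $I\in C^1$, establish the mountain pass geometry via $(f_4)$ and $(f_3)$, prove the Cerami condition, and invoke the $(S_+)$-property of Theorem \ref{(S+)-2.0} to upgrade weak to strong convergence. The geometry and the passage from boundedness to convergence are essentially identical to the paper's Lemmas \ref{mountain pass geometry} and \ref{cerami condition}.

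The genuine gap is in your treatment of the case $v\equiv 0$ in the boundedness argument. The Jeanjean device you invoke---maximising $t\mapsto I(tu_n)$ on $[0,1]$---does produce $I(t_n u_n)\to\infty$, but to reach a contradiction you then need to compare $\displaystyle\int_\Omega \overline{F}(x,t_nu_n)\,\mathrm{d}x$ with the a priori bounded quantity $\displaystyle\int_\Omega \overline{F}(x,u_n)\,\mathrm{d}x$. This step requires a monotonicity hypothesis of the type ``$t\mapsto f(x,t)/|t|^{m-1}$ is nondecreasing'' (equivalently, $\overline{F}(x,\cdot)$ nondecreasing on $[0,\infty)$), which is \emph{not} among $(f_1)$--$(f_4)$. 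Without it the Jeanjean route stalls.

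The paper handles the case $v\equiv 0$ by a different mechanism that uses $(f_2)$ in full strength. From $\|u_n\|^\ell \le I(u_n)+\int_\Omega F(x,u_n)$ one writes
\[
1\le \frac{I(u_n)}{\|u_n\|^\ell}+\int_{\{|u_n|\le R\}}\frac{F(x,u_n)}{\|u_n\|^\ell}\,\mathrm{d}x+\int_{\{|u_n|>R\}}\frac{F(x,u_n)}{|u_n|^\ell}\,|v_n|^\ell\,\mathrm{d}x,
\]
and controls the last integral by H\"older's inequality in the pair $(\Gamma,\widetilde{\Gamma})$. The factor $\big\|\tfrac{F(\cdot,u_n)}{|u_n|^\ell}\chi_{\{|u_n|>R\}}\big\|_{\Gamma}$ stays bounded precisely because of the inequality $\Gamma\big(x,\tfrac{F(x,t)}{|t|^\ell}\big)\le C\,\overline{F}(x,t)$ in $(f_2)$ together with the a priori bound $\int_\Omega\overline{F}(x,u_n)\le C$. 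The dual factor $\big\||v_n|^\ell\big\|_{\widetilde{\Gamma}}$ tends to $0$ because the hypothesis $\ell_\Gamma>N/\ell$ forces $H(x,t):=\widetilde{\Gamma}(x,|t|^\ell)$ to satisfy $H\ll\widehat{\Phi}_s^\ast$, so the compact embedding of Proposition \ref{imbed-critical-bounded}(ii) applies. This is exactly where both structural pieces of $(f_2)$---the growth window $N/\ell<\ell_\Gamma$ and the pointwise inequality---are consumed; your sketch never uses either in Case 2.
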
	
	
	Associated to Problem \eqref{P}, we introduce the energy functional $I_{s,\Phi}:W_0^{s,\Phi_{x,y}}(\Omega)\rightarrow\mathbb{R}$ defined by
	$$I_{s,\Phi}(u)=J_{s,\Phi}(u) -\int_{\Omega}F(x,u)\;\mathrm{d}x.$$
	In view of assumption $(f_1)$ and Lemma \ref{derivative}, one can show that $I_{s,\Phi}$ is well-defined, belongs to $C^1$ and it's G\^{a}teaux derivative is given by
	$$\left\langle I_{s,\Phi}'(u), v\right\rangle = \left\langle J'_{s,\Phi}(u),v\right\rangle  - \int_{\Omega} f(x,u)v \mathrm{d}x, $$
	see for instance \cite[Lemmas 3.1 and 3.2]{Azroul1}. Furthermore, critical points of $I_{s,\Phi}$ are solutions to Problem \eqref{P}, and conversely.
	
	In order to apply variational methods, we recall that the functional $I_{s,\Phi}$ is said to satisfy $(C)_c$-condition if, for $c\in \mathbb{R}$, any sequence $\{u_n\}_{n \in\mathbb{N}}$ in $W_0^{\Phi_{x,y}}(\Omega)$ such that
	$$I_{s,\Phi}(u_n) \rightarrow c \quad \mbox{and} \quad (1+\|u\|)\|I_{s,\Phi}'(u_n)\|_\ast \rightarrow 0$$
	admits a convergent subsequence. A sequence satisfying the above condition is called $(C)_c$-sequence. We say that $I_{s,\Phi}$ satisfies the Cerami condition, in short $(C)$-condition, if it satisfies $(C)_c$-condition for all $c \in \mathbb{R}$.
	
	We apply the following variant of the well-known mountain pass theorem.
	\begin{lemma}(Mountain Pass Theorem \cite[Theorem 5.4.6]{Papageorgiou-Radulescu-Repovs})\label{Mountain pass}
		Let $X$ be a Banach space and suppose that $I\in C^{1}(x, \mathbb{R})$,
		$$\max\{I(u_0),I(u_1)\}\leq\inf_{\|u-u_0\|=\rho} I(u)=:\alpha_\rho,$$
		for some $u_0, u_1 \in X$ and $\|u_1-u_0\|>\rho>0$. If $I$ satisfies the $(C)_c$-condition at
		$$c=\inf_{\gamma\in\varGamma}\max_{s\in[0,1]} I_{s,\Phi}(\gamma(s)),$$
		where $\varGamma=\{\gamma\in C([0,1], X):\; \gamma(0)=u_0\; \mbox{and}\; \gamma(1)=u_1\}$, then $c \geq \alpha_\rho$ and $c$ is a critical value of  $I$.
	\end{lemma}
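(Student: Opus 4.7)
The plan is to prove the abstract mountain pass theorem in the standard two-step fashion: a purely topological argument giving $c\geq \alpha_\rho$, followed by a contradiction argument using the Cerami condition together with a quantitative deformation lemma adapted to the weight $(1+\|u\|)$.

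First I would establish $\alpha_\rho \leq c <+\infty$. Finiteness is immediate because the constant-speed segment $\gamma_0(t)=(1-t)u_0+tu_1$ belongs to $\varGamma$ and $I\circ\gamma_0$ is continuous on $[0,1]$. For the lower bound, fix any $\gamma\in\varGamma$ and consider the scalar continuous function $t\mapsto \|\gamma(t)-u_0\|$, which equals $0$ at $t=0$ and $\|u_1-u_0\|>\rho$ at $t=1$; by the intermediate value theorem there exists $t_\gamma\in(0,1)$ with $\|\gamma(t_\gamma)-u_0\|=\rho$, hence $\max_{t\in[0,1]}I(\gamma(t))\geq I(\gamma(t_\gamma))\geq \alpha_\rho$. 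Taking the infimum over $\gamma$ yields $c\geq \alpha_\rho$.

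Next I would show $c$ is a critical value. Arguing by contradiction, assume that the set $K_c:=\{u\in X:I(u)=c,\ I'(u)=0\}$ is empty. Combined with the $(C)_c$-condition this produces a uniform lower bound of Cerami type on a slab around $c$: there exist $\varepsilon_0\in(0,(c-\max\{I(u_0),I(u_1)\})/2)$ (or, in the degenerate case $c=\alpha_\rho=\max\{I(u_0),I(u_1)\}$, a refined choice on an annulus around the sphere $\partial B_\rho(u_0)$) and $\delta>0$ such that
\[
(1+\|u\|)\,\|I'(u)\|_\ast \geq \delta \qquad \text{for every } u\in I^{-1}([c-2\varepsilon_0,c+2\varepsilon_0]).
\]
Indeed, if this failed we could extract a Cerami sequence at level $c$, and by $(C)_c$ it would cluster at a point of $K_c$, contradicting $K_c=\emptyset$. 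From this one constructs a locally Lipschitz pseudo-gradient vector field $V:X\setminus K\to X$ with $\|V(u)\|\leq 2/(1+\|u\|)$ and $\langle I'(u),V(u)\rangle\geq \tfrac{1}{2}\|I'(u)\|_\ast/(1+\|u\|)$, and integrates the ODE $\dot{\eta}=-\chi(\eta)V(\eta)$ with $\chi$ a cut-off supported in $I^{-1}([c-2\varepsilon_0,c+2\varepsilon_0])$. The resulting deformation $\eta:[0,1]\times X\to X$ satisfies $\eta(\tau,u)=u$ whenever $|I(u)-c|\geq 2\varepsilon_0$, $I(\eta(\tau,u))\leq I(u)$, and
\[
I(u)\leq c+\varepsilon_0 \ \Longrightarrow\ I(\eta(1,u))\leq c-\varepsilon_0.
\]

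Finally I would conclude. By definition of $c$ as an infimum, there exists $\gamma\in\varGamma$ with $\max_{t\in[0,1]}I(\gamma(t))\leq c+\varepsilon_0$. Set $\tilde\gamma(t):=\eta(1,\gamma(t))$; continuity is inherited from $\eta$ and $\gamma$. The choice of $\varepsilon_0$ guarantees $I(u_0),I(u_1)<c-2\varepsilon_0$, so $\eta(1,u_j)=u_j$ for $j=0,1$ and therefore $\tilde\gamma\in \varGamma$. The deformation property then gives $\max_{t}I(\tilde\gamma(t))\leq c-\varepsilon_0<c$, contradicting the definition of $c$. Hence $K_c\neq\emptyset$, i.e.\ $c$ is a critical value. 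The main technical obstacle is the degenerate case $c=\alpha_\rho=\max\{I(u_0),I(u_1)\}$ — one cannot simply choose $\varepsilon_0$ below the gap; in that case one applies the deformation to a minimizing sequence for $\alpha_\rho$ on the sphere $\partial B_\rho(u_0)$ (or invokes the intersection lemma with the annulus $\{\rho-\eta\leq \|u-u_0\|\leq \rho+\eta\}$) to still extract a critical point at level $c$, using the Cerami weight $1/(1+\|u\|)$ to handle unboundedness of the flow lines.
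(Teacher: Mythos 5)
The paper does not prove this lemma; it simply cites it as \cite[Theorem 5.4.6]{Papageorgiou-Radulescu-Repovs}, so there is no ``paper proof'' to compare against. Your sketch is the standard deformation-lemma proof of the Cerami mountain pass theorem, and the overall strategy is sound: the intermediate-value argument for $c\geq\alpha_\rho$ is correct, the extraction of a uniform lower bound on $(1+\|u\|)\|I'(u)\|_\ast$ from $K_c=\emptyset$ plus $(C)_c$ is correct, and the final contradiction via an $\eta(1,\cdot)$-pushed path is the right endgame. You also rightly flag the degenerate case $c=\alpha_\rho$ as the principal obstruction.

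The one place where the sketch glosses over a genuine technical point is the deformation estimate itself. With your normalization $\|V(u)\|\le 2/(1+\|u\|)$ and $\langle I'(u),V(u)\rangle\ge\tfrac12\|I'(u)\|_\ast/(1+\|u\|)$, on the slab one only gets $\tfrac{d}{d\tau}I(\eta)\le -\tfrac{\delta}{2}(1+\|\eta\|)^{-2}$, and since $(1+\|\eta(\tau)\|)^2\le(1+\|u\|)^2+4\tau$ this decay rate is \emph{not} uniformly bounded below as $\|u\|\to\infty$; a time-$1$ flow will not in general produce the drop $I(\eta(1,u))\le c-\varepsilon_0$ for all $u$ with $I(u)\le c+\varepsilon_0$. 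One must either rescale time (run the flow for a time proportional to $(1+\|u\|)^2/\delta$), use the alternative normalization $V=W/\|I'\|_\ast^2$ or $V=(1+\|u\|)W/\|I'\|_\ast^2$ so that $\tfrac{d}{d\tau}I(\eta)\le -\chi$ uniformly while controlling $\|\eta\|$ via a Gronwall estimate, or restrict attention to the bounded image of a fixed almost-optimal path and then resolve the resulting circularity in the choice of $\varepsilon_0$. This is exactly the content of the Cerami-adapted quantitative deformation lemma (Bartolo--Benci--Fortunato), so the result you are invoking is true, but the way you stated the pseudo-gradient bounds does not by itself yield the conclusion $I(\eta(1,u))\le c-\varepsilon_0$.
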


	Next, we show that $I_{s,\Phi}$ possesses the mountain pass geometry.
	\begin{lemma}\label{mountain pass geometry}
		Assume that $(f_1)$, $(f_3)$ and $(f_4)$ hold. Then, $I_{s,\Phi}$ satisfies the following assertions:
		\begin{itemize}
			\item[(i)] there exist $\rho, \alpha > 0$ such that $I_{s,\Phi}(u) \geq \alpha$, for all $u \in W_0^{s,\Phi_{x,y}}(\Omega)$ such that $\|u\|=\rho$;
			
			\item[(ii)] there exists $e \in W_0^{s,\Phi_{x,y}}(\Omega)$ with $\|e\|>\rho$ such that $I_{s,\Phi}(e) < 0$.
		\end{itemize}
	\end{lemma}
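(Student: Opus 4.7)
I will verify (i) and (ii) by evaluating $I_{s,\Phi}$ respectively on a small sphere around the origin and along a suitable ray through $0$. The key ingredients are the Poincar\'{e}-type inequality \eqref{poincareconstant}, the subcritical compact embedding in Proposition \ref{imbed-critical-bounded}(ii), and the modular/norm comparison Lemmas \ref{M-N1} and \ref{M-N2}.

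For (i), condition $(f_4)$ provides $\varepsilon\in(0,1)$ and $\delta>0$ such that $|f(x,t)|\le \frac{1-\varepsilon}{\lambda_1}|t|\widehat{\varphi}_x(|t|)$ whenever $|t|\le \delta$, uniformly in $x\in\Omega$; combining this with the growth bound $(f_1)$ for $|t|\ge \delta$ and integrating in $t$ yields a constant $C_\varepsilon>0$ with
\begin{equation*}
F(x,t) \le \tfrac{1-\varepsilon}{\lambda_1}\,\widehat{\Phi}_x(|t|) + C_\varepsilon\,\Psi(x,|t|), \quad (x,t)\in\Omega\times\mathbb{R}.
\end{equation*}
Integrating over $\Omega$ and applying \eqref{poincareconstant} to the first summand gives
\begin{equation*}
I_{s,\Phi}(u)\ge \varepsilon\, J_{s,\Phi}(u) - C_\varepsilon \int_\Omega \Psi(x,|u|)\,\mathrm{d}x.
\end{equation*}
For $\|u\|=\rho<1$, Lemma \ref{M-N2}(ii) yields $J_{s,\Phi}(u)\ge \rho^m$. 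Since $\Psi\ll \widehat{\Phi}_s^\ast$, Proposition \ref{imbed-critical-bounded}(ii) ensures a compact embedding $W_0^{s,\Phi_{x,y}}(\Omega)\hookrightarrow L_{\Psi_x}(\Omega)$, and the $\Psi$-analogue of Lemma \ref{M-N1}(iv) (valid because $(f_1)$ imposes $\ell_\Psi \le t^2\psi(x,t)/\Psi(x,t)\le m_\Psi$) gives $\int_\Omega \Psi(x,|u|)\,\mathrm{d}x \le C\,\rho^{\ell_\Psi}$ for $\rho$ small. As $\ell_\Psi>m$, one obtains $I_{s,\Phi}(u)\ge \varepsilon\rho^m - C'\rho^{\ell_\Psi}\ge \alpha>0$ for $\rho$ sufficiently small.

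For (ii), fix $u_0\in C_0^2(\Omega)$ with $u_0\ge 0$ and $u_0\not\equiv 0$; extended by zero, $u_0\in W_0^{s,\Phi_{x,y}}(\Omega)$. Condition $(f_3)$ provides, for each $M>0$, a constant $C_M>0$ such that $F(x,t)\ge \tfrac{M}{m}\,|t|^m - C_M$ for every $x\in\Omega$ and $t\ge 0$. Since $D_s(tu_0)=tD_su_0$ for $t>0$, Lemma \ref{M-N2}(i) yields $J_{s,\Phi}(tu_0)\le t^m J_{s,\Phi}(u_0)$ whenever $t\ge 1$, whence
\begin{equation*}
I_{s,\Phi}(tu_0)\le t^m\Big(J_{s,\Phi}(u_0) - \tfrac{M}{m}\int_\Omega u_0^m\,\mathrm{d}x\Big) + C_M|\Omega|, \qquad t\ge 1.
\end{equation*}
Choosing $M$ large enough so the parenthesis becomes strictly negative and then $t=t_\ast$ sufficiently large gives $e:=t_\ast u_0$ with $\|e\|>\rho$ and $I_{s,\Phi}(e)<0$.

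The delicate point is (i): the constant $1/\lambda_1$ built into $(f_4)$ is precisely the reciprocal of the Poincar\'{e} constant in \eqref{poincareconstant}, so that inequality yields only a borderline estimate. One must carefully extract a strict $\varepsilon$-gain from $(f_4)$ and then rely on the strict inequality $m<\ell_\Psi$ from $(f_1)$ to ensure that the subcritical correction coming from $\Psi$ is genuinely of higher order at the origin; otherwise no strictly positive lower bound on a small sphere would be available. Part (ii), by contrast, is routine once $(f_3)$ is combined with the upper modular estimate of Lemma \ref{M-N2}(i).
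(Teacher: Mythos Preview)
Your proof is correct and follows essentially the same approach as the paper: for (i) you derive the estimate $F(x,t)\le \tfrac{1-\varepsilon}{\lambda_1}\widehat\Phi_x(|t|)+C_\varepsilon\Psi(x,|t|)$ from $(f_1)$ and $(f_4)$, apply the Poincar\'e inequality \eqref{poincareconstant} and the modular/norm comparisons of Lemmas~\ref{M-N1}--\ref{M-N2} together with the compact embedding into $L_{\Psi_x}(\Omega)$, and exploit $\ell_\Psi>m$; for (ii) you use $(f_3)$ to get $F(x,t)\ge \tfrac{M}{m}|t|^m-C_M$ and conclude via the upper modular bound on $J_{s,\Phi}(tu_0)$. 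Your choice of $u_0\ge 0$ in part (ii) is in fact slightly more careful than the paper's, since $(f_3)$ only controls $f(x,t)/|t|^{m-1}$ as $t\to+\infty$.
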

	\begin{proof}
		$(i)$ Arguing as in \cite[Lemma 3.1]{Carvalho et.al}, if $(f_1)$ and $(f_4)$ hold, we derive that
		\begin{equation}\label{est4.1.1}
			F(x,t)\leq \left(\frac{1 - \varepsilon}{\lambda_1}\right)\widehat{\Phi}(x,|t|) + C\Psi(x,|t|), \quad x\in\Omega, \; t\in\mathbb{R},
		\end{equation}
		for $\varepsilon \in(0,1)$ sufficiently small. Thus, by using \eqref{poincareconstant}, \eqref{est4.1.1}, the embedding $W_0^{\Phi_{x,y}}(\Omega)\hookrightarrow L^{\Psi_x}(\Omega)$ and Lemmas \ref{M-N1} $(ii)$ and \ref{M-N2} $(ii)$, we get
		$$
		\begin{aligned}
			I_{s,\Phi}(u) &= J_{s,\Phi}(u) - \int_{\Omega} F(x,u)\mathrm{d}x \\
			& = J_{s,\Phi}(u) -\left(\frac{1- \varepsilon}{\lambda_1} \right)\int_{\Omega}\widehat{\Phi}(x,|u|)\;\mathrm{d}x-C\int_{\Omega}\Psi(x,|u|)\;\mathrm{d}x\\
			& \geq J_{s,\Phi}(u) - (1-\varepsilon)J_{s,\Phi}(u) - C\int_{\Omega}\Psi(x,|u|)\;\mathrm{d}x\\
			& \geq \varepsilon \min\left\lbrace\|u\|^{\ell},\|u\|^{m}\right\rbrace - C\max\left\lbrace\|u\|^{\ell_\Psi}, \|u\|^{m_\Psi}\right\rbrace.
		\end{aligned}
		$$
		Hence, by taking $\|u\|\leq1$ and using that $m<\ell_\Psi$, we have
		$$I_{s,\Phi}(u) \geq \|u\|^m\left(\varepsilon - C\|u\|^{\ell_\Psi - m}\right) \geq \frac{\rho^m \varepsilon}{2}=:\alpha>0,$$
		whenever
		$$\|u\|=\rho:=\min\left\lbrace 1, \left(\frac{\varepsilon}{2C}\right)^{\frac{1}{\ell_\Psi-m}}\right\rbrace, $$
		which proves that $(i)$ is satisfied.
		
		$(ii)$ It is not hard to show that, if $(f_3)$ holds, then for each $M > 0$, there exists $C>0$ such that
		$$F(x,t)\geq M|t|^m - C,\quad \forall x \in \Omega, \; \forall t\in\mathbb{R}.$$
		Let $u_0\in W_0^{s,\Phi_{x,y}}(\Omega)$ be such that $\|u_0\|=1$. In view of the above relation and Lemma \ref{M-N2} ($ii$), for  $t\geq1$ large enough, we have that
		$$
		\begin{aligned}
			I_{s,\Phi}(tu_0)&= J_{s,\Phi}(tu_0) - \int_{\Omega} F(x,tu_0)\,\mathrm{d}x\\
			& \leq t^m - Mt^m\int_{\Omega}|u_0|^m\;\mathrm{d}x+ C|\Omega|\\
			& =\left(1-M\int_{\Omega}|u_0|^m\;\mathrm{d}x\right) t^m+ C|\Omega|.
		\end{aligned}
		$$
		By choosing $M>\displaystyle\left(\int_{\Omega}|u_0|^m \mathrm{d}x\right)^{-1}$, it follows that $I_{s,\Phi}(tu_0) \rightarrow -\infty$ when $t \rightarrow +\infty$. Hence, the assertion ($ii$) follows by taking $e = t u_0$ with $t$ sufficiently large.
	\end{proof}
	
	The next step consists in proving that the functional $I_{s,\Phi}$ satisfies $(C)$-condition. For this, we first show that any Cerami sequence for $I_{s,\Phi}$ is bounded.
	\begin{lemma}\label{boundeness-Cerami-sequence}
		Assume that $(f_1)-(f_4)$ hold. Then, any Cerami sequence for the functional $I_{s,\Phi}$ is bounded.
	\end{lemma}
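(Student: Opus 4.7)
My plan is to argue by contradiction: suppose $\{u_n\}$ is a Cerami sequence with $\|u_n\|\to\infty$, set $v_n:=u_n/\|u_n\|$, so that $\|v_n\|=1$, and extract along a subsequence $v_n\rightharpoonup v$ in $W_0^{s,\Phi_{x,y}}(\Omega)$ with $v_n\to v$ a.e.\ and in $L^q(\Omega)$ for every $q<\ell_s^\ast$ (Proposition~\ref{imbed-critical-bounded}~$(ii)$). The analysis then splits according to whether $v\not\equiv 0$ or $v\equiv 0$.

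As a preparatory step, I would show $\int_\Omega \overline{F}(x,u_n)\,\mathrm{d}x$ is bounded. Computing
\[
I_{s,\Phi}(u_n)-\tfrac{1}{m}\langle I'_{s,\Phi}(u_n),u_n\rangle = \Big[J_{s,\Phi}(u_n)-\tfrac{1}{m}\langle J'_{s,\Phi}(u_n),u_n\rangle\Big] + \tfrac{1}{m}\int_\Omega \overline{F}(x,u_n)\,\mathrm{d}x,
\]
the bracket is nonnegative by $(\varphi_3)$ in the form $t^2\varphi_{x,y}(t)\le m\Phi_{x,y}(t)$, and the Cerami property forces the left-hand side to $c$, so $\int\overline{F}(x,u_n)\,\mathrm{d}x\leq mc+o(1)$. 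Conversely, $(f_2)$ implies $\overline{F}(x,t)\geq 0$ for $|t|\geq R$ (since $\Gamma\geq 0$), while $|\overline{F}|$ is uniformly bounded on $\{|t|\leq R\}$ by $(f_1)$; this gives two-sided boundedness. A companion lower bound, coming from $I_{s,\Phi}(u_n)=J_{s,\Phi}(u_n)-\int F(x,u_n)\,\mathrm{d}x=O(1)$ together with $J_{s,\Phi}(u_n)\geq\|u_n\|^\ell$ (Lemma~\ref{M-N2}~$(ii)$), is
\[
\int_\Omega \frac{F(x,u_n)}{\|u_n\|^\ell}\,\mathrm{d}x \geq 1 - o(1),
\]
which the case analysis will contradict.

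\emph{Case $v\not\equiv 0$.} On $\Omega_+:=\{v\neq 0\}$, which has positive measure, one has $|u_n(x)|\to\infty$; by $(f_3)$, $F(x,u_n)/|u_n|^m\to+\infty$ there. The growth estimate $|F(x,t)|\leq C(|t|+\Psi(x,|t|))$ near $0$ from $(f_1)$, combined with the positivity of $F$ for $|t|\geq R$ (a consequence of $(f_3)$), gives a uniform lower bound $F(x,t)\geq -C_0$, so the sequence $F(x,u_n)/\|u_n\|^m$ is uniformly bounded below by a null sequence. Applying Fatou's lemma to $[F(x,u_n)/|u_n|^m]|v_n|^m$ yields $\int F(x,u_n)\|u_n\|^{-m}\,\mathrm{d}x\to +\infty$; on the other hand, $J_{s,\Phi}(u_n)\leq\|u_n\|^m$ forces $\int F(x,u_n)\|u_n\|^{-m}\,\mathrm{d}x\leq 1+o(1)$, a contradiction.

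\emph{Case $v\equiv 0$.} Splitting $\int F(x,u_n)\|u_n\|^{-\ell}\,\mathrm{d}x$ along $\{|u_n|<R\}$ (which contributes $o(1)$ since $|F|$ is uniformly bounded there) and $\{|u_n|\geq R\}$, on the latter I apply the scaled Young inequality for the conjugate pair $(\Gamma,\widetilde{\Gamma})$ with parameter $\varepsilon\in(0,1)$:
\[
\frac{F(x,u_n)}{|u_n|^\ell}|v_n|^\ell \leq \varepsilon^{\ell_\Gamma}\,\Gamma\!\left(x,\frac{F(x,u_n)}{|u_n|^\ell}\right) + \widetilde{\Gamma}\!\left(x,\frac{|v_n|^\ell}{\varepsilon}\right).
\]
Condition $(f_2)$ controls the first term: $\int_{|u_n|\geq R}\Gamma(x,F/|u_n|^\ell)\,\mathrm{d}x\leq C\int\overline{F}(x,u_n)\,\mathrm{d}x$, bounded by the preparatory step. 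The constraint $\ell_\Gamma>N/\ell$ in $(f_2)$ ensures $\ell\widetilde{m_\Gamma}<\ell_s^\ast$, so by modular estimates analogous to Lemma~\ref{M-N1} and the strong convergence $v_n\to 0$ in $L^{\ell\widetilde{m_\Gamma}}(\Omega)$, the second term tends to $0$ for each fixed $\varepsilon$. Hence $\limsup_n\int F(x,u_n)\|u_n\|^{-\ell}\,\mathrm{d}x\leq C\varepsilon^{\ell_\Gamma}\cdot K$, and letting $\varepsilon\to 0$ gives $\int F(x,u_n)\|u_n\|^{-\ell}\,\mathrm{d}x\to 0$, contradicting the lower bound. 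The main obstacle is precisely this case: one must convert the merely bounded ``non-quadratic'' quantity $\int\overline{F}\,\mathrm{d}x$ into a small quantity $\int F/\|u_n\|^\ell\,\mathrm{d}x$, which requires the $\varepsilon$-scaled Young inequality (producing the factor $\varepsilon^{\ell_\Gamma}$) together with a delicate compatibility between the growth indices of $\Gamma$ and the compact embedding range of $W_0^{s,\Phi_{x,y}}(\Omega)$.
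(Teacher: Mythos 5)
Your proposal is correct and follows essentially the same strategy as the paper: argue by contradiction with $v_n=u_n/\|u_n\|$, bound $\int_\Omega\overline{F}(x,u_n)\,\mathrm{d}x$ via the Cerami condition and $(\varphi_3)$, then split into the cases $v\not\equiv 0$ (dispatched by Fatou and $(f_3)$) and $v\equiv 0$ (dispatched via the conjugate pair $(\Gamma,\widetilde\Gamma)$ together with the compact embedding guaranteed by $N/\ell<\ell_\Gamma$). Your presentational variants — Fatou applied to $F(x,u_n)/\|u_n\|^m$ instead of to $\overline{F}(x,u_n)$, and an $\varepsilon$-scaled Young inequality in place of the paper's H\"older inequality for $(\Gamma,\widetilde\Gamma)$ — are equivalent in substance; the one small slip is that the exponent you need below $\ell_s^\ast$ for the vanishing of the $\widetilde\Gamma$-modular of $|v_n|^\ell$ is $\ell\widetilde{\ell_\Gamma}$ (the paper's choice), not $\ell\widetilde{m_\Gamma}$, though since $\ell\widetilde{m_\Gamma}<\ell\widetilde{\ell_\Gamma}<\ell_s^\ast$ the argument survives.
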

	
	\begin{proof}
		For any $c\in \mathbb{R}$, let $\{u_n\}_{n\in\mathbb{N}}$ be a $(C)_c$-sequence. Assume by contradiction that $\{u_n\}_{n\in\mathbb{N}}$  is unbounded in $W_0^{s,\Phi_{x,y}}(\Omega)$. Then, there exists a subsequence, still labeled $\{u_n\}_{n\in\mathbb{N}}$, such that
		$$\|u_n\|\rightarrow +\infty, \quad I_{s,\Phi}(u_n) \rightarrow c \quad \mbox{and}\quad (1+\|u_n\|)\|I_{s,\Phi}'(u_n)\|_\ast\rightarrow 0.$$
		In view of assumption $(\varphi_3)$ we can see that
		\begin{equation}\label{est-4.1}	
			\begin{aligned}
				mI_{s,\Phi}(u_n) - \langle I_{s,\Phi}'(u_n), u_n \rangle &= \int_{\Omega}\int_{\Omega} \left( m\Phi_{x,y}(|D_su_n|)- \varphi_{x,y}(|D_su_n|)|D_su_n|^2\right) \mathrm{d}\mu\\ 
				& \quad+ \int_{\Omega}\left( f(x,u_n)u_n - m F(x,u_n)\right)\mathrm{d}x\\
				& \geq \int_{\Omega} \overline{F}(x,u_n)\;\mathrm{d}x.
			\end{aligned}
		\end{equation}
		Let $v_n:=\frac{u_n}{\|u_n\|}$. Since $\{v_n\}_{n\in\mathbb{N}}$ is bounded in $W_0^{s,\Phi_{x,y}}(\Omega)$, by reflexivity and the compact embedding $W_0^{s,\Phi_{x,y}}(\Omega)\hookrightarrow L^\Psi(\Omega)$,  there exists a subsequence, still denoted by $\{v_n\}_{n\in\mathbb{N}}$, such that 
		$$\left\{\begin{aligned}
			& v_n \rightharpoonup v, \quad \mbox{in} \quad W_0^{s,\Phi_{x,y}}(\Omega),\\
			&  v_n \rightarrow v, \quad \mbox{in} \quad L^\Psi(\Omega),\\
			& v_n(x) \rightarrow v(x), \quad \mbox{for a.e} \quad x\in \Omega.
		\end{aligned}\right.$$
		We split the proof into two cases. 
		
		\vspace{0,3cm}
		
		\noindent {\bf Case 1:} The set $\{v\neq 0\}:=\{x\in \Omega: v(x)\neq 0\}$ has positive Lebesgue measure. 
		
		\vspace{0,3cm}
		
		In this case, we have
		$$|u_n|=|v_n|\|u_n\| \rightarrow +\infty, \quad \mbox{in} \quad \{v\neq0\}.$$
		Since $\overline{F}(x,t)$ is positive, it follows from $(f_2)$, $(f_{3})$,  Fatou's Lemma and \eqref{est-4.1} that
		$$mc\geq \liminf_{n\rightarrow +\infty} \int_{\Omega}\overline{F}(x,u_n) \;\mathrm{d}x\geq\int_{\Omega}\liminf_{n\rightarrow +\infty} \overline{F}(x,u_n) \;\mathrm{d}x=+\infty,$$
		which is an absurd. This completes the proof for Case $1$.
		
		\vspace{0,3cm}
		
		\noindent {\bf Case 2:} $ v(x) = 0$, for a.e. $x\in\Omega$. 
		
		\vspace{0,3cm}
		
		It follows from the definition of $I_{s,\Phi}$, Lemma \ref{M-N2} $(ii)$ and $\|u_n\|\rightarrow +\infty$ that
		$$\|u_n\|^\ell \leq I_{s,\Phi}(u_n) + \int_{\Omega} F(x,u_n)\;\mathrm{d}x,$$
		wich implies that
		\begin{equation}\label{est-4.3}
			1\leq \frac{I_{s,\Phi}(u_n)}{\|u_n\|^\ell} +\int_{\{|u_n|\leq R\}} \frac{ F(x,u_n)}{\|u_n\|^\ell}\; \mathrm{d}x + \int_{\{|u_n|>R\} }\frac{ F(x,u_n)}{\|u_n\|^\ell} \;\mathrm{d}x.
		\end{equation}
		Next, we show that all terms of the right-hand side of \eqref{est-4.3} tend to zero when $n\rightarrow +\infty$, which is the desired contradiction. Indeed, since $\{I_{s,\Phi}(u_n)\}_{n\in\mathbb{N}}$ is bounded, the first term obviously tends to zero. For the second one, it follows from $(f_1)$ that for any $R>0$, there holds
		$$|F(x,t)|\leq \int_0^t|f(x,\tau)|\leq C(|t|+\Psi(x,|t|))\leq C(R), \quad x\in\Omega,\; |t|\leq R.$$
		Thus, 
		\begin{equation}\label{lim-1}
			\int_{\{|u_n|\leq R\}} \frac{F(x,u_n)}{\|u_n\|^\ell}\;\mathrm{d}x\leq \frac{C(R)|\Omega|}{\|u_n\|^\ell}=o_n(1).
		\end{equation}
		For the third term, by H\"{o}lder's inequality we have
		\begin{equation}\label{est-4.4}
			\begin{aligned}
				\int_{\{|u_n|>R\} }\frac{ F(x,u_n)}{\|u_n\|^\ell} \;\mathrm{d}x &=\int_{\{|u_n|\leq R\}} \frac{F(x,u_n)}{|u_n|^\ell} |v_n|^\ell\; \mathrm{d}x\\
				& \leq 2 \left\| \frac{F(x,u_n)}{|u_n|^\ell}\chi_{\{|u_n|>R\}}\right\|_\Gamma \left\||v_n|^\ell \chi_{\{|u_n|>R\}}\right\|_{\widetilde{\Gamma}}.
			\end{aligned}
		\end{equation}
		In view of Lemma \ref{M-N1} $(ii)$ and $(f_2)$, there exists a constant $C:=C(\ell_\Gamma, m_\Gamma)>0$ such that
		\begin{equation}\label{est-4.5}
			\left\| \frac{F(x,u_n)}{|u_n|^\ell}\chi_{\{|u_n|>R\}}\right\|_\Gamma \leq C \max\left\lbrace \left( \int_{\Omega}\overline{F}(x,u_n)\;\mathrm{d}x\right)^{\frac{1}{\ell_\Gamma}}, \left( \int_{\Omega}\overline{F}(x,u_n)\;\mathrm{d}x\right)^{\frac{1}{m_\Gamma}} \right\rbrace + C.
		\end{equation}
		By using \eqref{est-4.1}, we also have that
		\begin{equation}\label{est-4.6}
			\int_{\Omega} \overline{F}(x,u_n)\;\mathrm{d}x\leq mI_{s,\Phi}(u_n) - \langle I_{s,\Phi}'(u_n), u_n \rangle \leq C.
		\end{equation}
		Therefore, by combing \eqref{est-4.5} and \eqref{est-4.6}, we obtain that $\left\lbrace \left\| \frac{F(x,u_n)}{|u_n|^\ell}\chi_{\{|u_n|>R\}}\right\|_\Gamma \right\rbrace_{n\in\mathbb{N}}$ is bounded.
		Now, we shall prove that
		\begin{equation}\label{lim-2}
			\lim_{n\rightarrow +\infty} \left\||v_n|^\ell \chi_{\{|u_n|>R\}}\right\|_{\widetilde{\Gamma}}=0.
		\end{equation}
		Indeed, let $H(x,t):=\widetilde{\Gamma}(x,|t|^\ell)$. Since $\ell>1$ and $\widetilde{\Gamma} \in \mathcal{N}(\Omega)$, it follows that $H\in \mathcal{N}(\Omega)$. In addition, since \eqref{gamma2} and $\frac{N}{\ell}<\ell_\Gamma$ hold, we have that $H(x,k)$ is bounded for each $k>0$ and
		$$\ell\widetilde{\ell_\Gamma}<\frac{N\ell}{N-\ell} < \frac{N\ell}{N-s\ell} =\ell_s^\ast.$$ 
		Then, for each $k>0$, it follows from the last inequality, Lemma \ref{M-N1} $(iii)$ and Lemma \ref{M-N critical} $(i)$ that
		$$\lim_{t\rightarrow+\infty} \frac{H(x,tk)}{\widehat{\Phi}_s^\ast(x,t)}\leq \frac{H(x,k)}{\widehat{\Phi}_s^\ast(x,1)}\lim_{t\rightarrow+\infty}\frac{t^{\ell \widetilde{\ell_\Gamma}}}{t^{\ell_s^\ast}} =0, \quad \mbox{uniformly for }x\in \Omega,$$
		which shows that $H\ll\widehat{\Phi}_s^\ast$. Hence, by Proposition \ref{imbed-critical-bounded} $(ii)$ the embedding $W_0^{s,\Phi_{x,y}}(\Omega)\hookrightarrow L^{H_x}(\Omega)$ is compact, which implies that
		$$\lim_{n\rightarrow +\infty} \int_{\Omega} \widetilde{\Gamma}(x,|v_n|^\ell)\; \mathrm{d}x = 0.$$
		Thus, the limit \eqref{lim-2} holds. Since  $\left\lbrace \left\| \frac{F(x,u_n)}{|u_n|^\ell}\chi_{\{|u_n|>R\}}\right\|_\Gamma \right\rbrace_{n\in\mathbb{N}}$ is bounded, we conclude from \eqref{est-4.4} and \eqref{lim-2} that
		\begin{equation}\label{lim-3}
			\int_{\{|u_n|>R\} }\frac{ F(x,u_n)}{\|u_n\|^\ell} \;\mathrm{d}x=o_n(1),
		\end{equation}
		which finishes the proof.
	\end{proof}
	
	\begin{lemma}\label{cerami condition}
		Assume that the assumptions $(f_1)-(f_4)$ hold. Then, $I_{s,\Phi}$ satisfies the Cerami condition.
	\end{lemma}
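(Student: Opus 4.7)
The plan is to reduce the verification of the Cerami condition to the $(S_+)$-property established in Theorem \ref{(S+)-2.0}. Let $\{u_n\}_{n\in\mathbb{N}}$ be a $(C)_c$-sequence for $I_{s,\Phi}$. By Lemma \ref{boundeness-Cerami-sequence}, the sequence is bounded in $W_0^{s,\Phi_{x,y}}(\Omega)$, which is reflexive. Hence, up to a subsequence, $u_n \rightharpoonup u$ weakly in $W_0^{s,\Phi_{x,y}}(\Omega)$. Since $\Psi \ll \widehat{\Phi}_s^\ast$ by $(f_1)$, Proposition \ref{imbed-critical-bounded}(ii) yields that the embedding $W_0^{s,\Phi_{x,y}}(\Omega)\hookrightarrow L_{\Psi_x}(\Omega)$ is compact, so $u_n \to u$ strongly in $L_{\Psi_x}(\Omega)$, and also in $L^1(\Omega)$ by Remark \ref{embed-comp-part}.

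Next I would show that the nonlinear term is asymptotically orthogonal to $u_n - u$, namely
\[
\int_\Omega f(x,u_n)(u_n - u)\,\mathrm{d}x \longrightarrow 0, \quad \mbox{as } n\to\infty.
\]
Using the growth estimate in $(f_1)$, split the integral into a piece bounded by $C\int_\Omega |u_n - u|\,\mathrm{d}x$, which vanishes by the $L^1$-compactness, and a piece bounded by $C\int_\Omega |u_n|\psi(x,|u_n|)|u_n - u|\,\mathrm{d}x$. For the latter, apply the Hölder-type inequality in $(L_{\Psi_x},L_{\widetilde{\Psi}_x})$ and then the pointwise estimate $\widetilde{\Psi}(x,|u_n|\psi(x,|u_n|)) \le \Psi(x,2|u_n|)$ of \eqref{est-conjugate}, combined with Lemma \ref{M-N1}(iv) and the $\Delta_2$-condition, to see that $\{|u_n|\psi(x,|u_n|)\}_{n\in\mathbb{N}}$ is bounded in $L_{\widetilde{\Psi}_x}(\Omega)$; since $\|u_n-u\|_{\Psi_x} \to 0$ by compactness, the product tends to zero.

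With this, from $(1+\|u_n\|)\|I_{s,\Phi}'(u_n)\|_\ast \to 0$ and the boundedness of $\{u_n - u\}$ I get $\langle I_{s,\Phi}'(u_n), u_n - u\rangle \to 0$, and therefore
\[
\langle J_{s,\Phi}'(u_n), u_n - u\rangle \;=\; \langle I_{s,\Phi}'(u_n), u_n - u\rangle + \int_\Omega f(x,u_n)(u_n - u)\,\mathrm{d}x \;\longrightarrow\; 0.
\]
In particular, $\limsup_{n\to\infty} \langle J_{s,\Phi}'(u_n), u_n - u\rangle \le 0$. Invoking the $(S_+)$-property from Theorem \ref{(S+)-2.0}, this forces $u_n \to u$ strongly in $W_0^{s,\Phi_{x,y}}(\Omega)$, as required.

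The main obstacle is the second step: controlling $\int_\Omega |u_n|\psi(x,|u_n|)|u_n - u|\,\mathrm{d}x$ in the Musielak setting, where the duality between $\Psi_x$ and $\widetilde{\Psi}_x$ must be handled carefully because the Young-type estimate \eqref{est-conjugate} depends on $(x,y)$ and the $\Delta_2$-constants; once that uniform bound is secured, everything else is the routine passage from weak to strong convergence via $(S_+)$.
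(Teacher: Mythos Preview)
Your proof is correct and follows essentially the same route as the paper: boundedness from Lemma~\ref{boundeness-Cerami-sequence}, weak convergence by reflexivity, compact embedding into $L_{\Psi_x}(\Omega)$, control of $\int_\Omega f(x,u_n)(u_n-u)\,\mathrm{d}x$ via H\"older and the bound $\widetilde{\Psi}(x,t\psi(x,t))\le \Psi(x,2t)$, and finally the $(S_+)$-property. The only cosmetic difference is that the paper bounds $\widetilde{\Psi}(x,|f(x,u_n)|)$ in one step using convexity of $\widetilde{\Psi}(x,\cdot)$, whereas you split off the constant part of $(f_1)$ and dispose of it via $L^1$-compactness; both arguments are equivalent.
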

	\begin{proof}
		Let $\{u_n\}_{n\in\mathbb{N}}$ be a Cerami sequence for $I_{s,\Phi}$. It follows from Lemma \ref{boundeness-Cerami-sequence} and the reflexivity of $W_0^{\Phi_{x,y}}(\Omega)$ that, up to subsequence, there exists $u \in W_0^{\Phi_{x,y}}(\Omega)$ such that $u_n\rightharpoonup u$, weakly in $W_0^{\Phi_{x,y}}(\Omega)$. We claim that $u_n \rightarrow u$ strongly in $W_0^{\Phi_{x,y}}(\Omega)$. By Proposition \ref{imbed-critical-bounded} ($ii$), the embedding $W_0^{s,\Phi_{x,y}}(\Omega)\hookrightarrow L^\Psi(\Omega)$ is compact, so $u_n \rightarrow u$ in $L^\Psi(\Omega)$. Furthermore, since $(1+\|u_n\|)\|I_{s,\Phi}'(u_n)\|_\ast \rightarrow 0$, we get
		\begin{equation}\label{est-4.7}
			\langle J'_{s,\Phi}(u_n), u_n - u \rangle=o_n(1) + \int_{\Omega}f(x,u_n)(u_n - u)\; \mathrm{d}x.
		\end{equation}
		On the other hand, the convexity of $\widetilde{\Psi}(x,\cdot)$ combined with $(f_1)$, Lemma \ref{M-N1} $(iii)$, the relation $\widetilde{\Psi}(x, t\psi(x,t))\leq \Psi(x,2t)$ and $\Delta_2$-condition imply that
		$$\begin{aligned}
			\widetilde{\Psi}(x,|f(x,u_n)|)& \leq \widetilde{\Psi} (x, C|u_n| +C|u_n|\psi(x,|u_n|))\\
			&\leq 2^{-1}\widetilde{\Psi}(x,2C)+2^{-1} \widetilde{\Psi}\big(x,2C|u_n|\psi(x,|u_n|)\big)\\
			& \leq C_1+C _2\widetilde{\Psi}(x,|u_n|\psi(x,|u_n|)) \\
			& \leq C_1+ 2^{m_\Psi}C_2\Psi(x,|u_n|),
		\end{aligned}$$
		which implies that sequence $\{f(\cdot,u_n(\cdot))\}_{n\in \mathbb{N}}$ is bounded in $L^{\widetilde{\Psi}}(\Omega)$. Thus, combining H\"{o}lder's inequality jointly with the convergence $u_n \rightarrow u$ in $L^\Psi(\Omega)$, we obtain
		\begin{equation}\label{est-4.8}
			\left| \int_{\Omega}f(x,u_n)(u_n - u)\; \mathrm{d}x \right| \leq 2\|f(\cdot,u_n)\|_{\widetilde{\Psi}} \|u_n - u\|_{\Psi}\rightarrow 0.
		\end{equation}
		Hence, from \eqref{est-4.7} and \eqref{est-4.8} it follows that
		$$\limsup_{n\rightarrow+\infty} \langle J_{s,\Phi}'(u_n), u_n - u\rangle = 0.$$
		Therefore, applying Theorem \ref{(S+)-2.0}, we conclude that $u_n\rightarrow u$.
	\end{proof}
	
	\noindent {\bf Proof of Theorem \ref{theorem1}.}  In view of Lemmas \ref{mountain pass geometry}, \ref{boundeness-Cerami-sequence} and \ref{cerami condition}, we can apply the Mountain Pass Theorem (Lemma \ref{Mountain pass}) to deduce that the functional $I_{s,\Phi}$ has a critical value $c\geq\alpha>0$ given by
	$$c=\inf_{\gamma\in\varGamma}\max_{s\in[0,1]} I_{s,\Phi}(\gamma(s)),$$
	where $\varGamma=\{\gamma\in C([0,1], W_0^{s,\Phi_{x,y}}(\Omega)):\; \gamma(0)=0\; \mbox{and}\; \gamma(1)=e\}$ and $e\in  W_0^{s,\Phi_{x,y}}(\Omega)$ is defined in Lemma \ref{mountain pass geometry}. Therefore, Problem \eqref{P} has a weak solution nontrivial. $\square$
	
	\section{Some classes of problems}\label{examples}

	In this section, we present some examples of functions $\Phi_{x,y}$ and source functions $f$ for which the existence result Theorem \ref{theorem1} may be applied. 

	Given $s\in(0,1)$ and a generalized $N$-function $\Phi_{x,y}$, we consider the following problem
	\begin{equation}\label{eqL}
	\left\{
	\begin{array}{ll}
	(-\Delta)^s_{\Phi_{x,y}} u =f(x,u),& \mbox{in }\Omega,\\
	u=0,& \mbox{on }\mathbb{R}^N\setminus \Omega,
	\end{array}
	\right.
	\end{equation}
where $f(x,t)=\frac{d}{dt}F(x,t)$ satisfies some suitable structural properties.

\subsection{Double phase problem}\label{ex1}

For $1<p<q<N$ and $a\in L^\infty(\Omega\times \Omega)$ a non-negative symmetric function, we consider the generalized $N$-function given by
$$
\Phi_{x,y}(t)=\frac{t^p}{p} + a(x,y) \frac{t^q}{q}.
$$
This gives the operator
$$
(-\Delta)^s_{\Phi_{x,y}}  u := (-\Delta)_p^s u +  (-\Delta)_{q,a}^s u,
$$
where (up to multiplicative constant) $(-\Delta)_p^s$ is the so called fractional $p$-Laplacian operator and $(-\Delta)_{q,a}^s$ is the anisotropic fractional $p$-Laplacian defined as
		\begin{eqnarray}\label{a-frac}
			(-\Delta)_{q,a}^s u(x)\displaystyle:= 2\lim_{\varepsilon\rightarrow 0} \int_{\mathbb{R}^N\setminus B_{\varepsilon}(x)} a(x,y)\frac{|u(x)-u(y)|^{q-2}(u(x)- u(y))}{|x-y|^{N+sq}}\;\mathrm{d}y.
		\end{eqnarray}
In this case, the nonlocal operator present in \eqref{eqL} is associated with the   energy functional
		\begin{equation}\label{func-double-phase}
			J_{s,\Phi}(u)=\int_{\Omega\times\Omega}\Phi_{x,y}(D_{s}u)\;\mathrm{d}\mu=\int_{\Omega\times\Omega} \left( \frac{|D_su|^p}{p} +a(x,y)\frac{|D_su|^q}{q}\right) \mathrm{d}\mu,\quad u\in W_0^{s,\Phi_{x,y}}(\Omega),
		\end{equation}
		whose integrand $\Phi_{x,y}(t)$ shows an unbalanced growth, precisely
		$$C_1|t|^p\leq\Phi_{x,y}(t)\leq C_2(|t|^p+|t|^q), \;\; \mbox{for a.e.}\; (x,y)\in \Omega\times\Omega \;\mbox{and for all}\; t\in\mathbb{R},$$ 
		with $C_1,C_2>0$.  The main feature of the functional integral \eqref{func-double-phase} is the change of ellipticity and growth properties on the set where the weight function $a(\cdot,\cdot)$ vanishes. More precisely, the energy density of $J_{s,\Phi}$ is controlled by $D_su$ at an order $q$ in the set $\{(x,y)\in \Omega\times\Omega: \; a(x,y)\neq 0\}$ and at an order $p$ in the set $\{(x,y)\in \Omega\times\Omega: \; a(x,y)= 0\}$. For this reason, this operator is known as fractional double phase operator, which is a special case of problems with non-standard growth.  Due to the unbalanced growth of $\Phi_{x,y}(\cdot)$, the classical fractional Sobolev space is not suitable to analize \eqref{eqL} and so we have to use the general abstract setting of the new fractional Musielak-Sobolev spaces.

		Fractional double phase problems are motivated by numerous local and nonlocal models arising in many fields of mathematical physics, for instance, composite materials, fractional quantum mechanics in the study of particles on stochastic fields, fractional superdiffusion, fractional white-noise limit and several equations that appear in the electromagnetism, electrostatics and electrodynamics as a model based on a modification of Maxwell’s Lagrangian density. For more details, see \cite{Ambrosio-Radulescu1, Zhang-Tang-Radulescu} and the references therein.

		We point out that, in the local case $s=1$, Zhikov \cite{Zhikov 1} was the first to investigate the integral functional of the double phase type in the context of the theory of elasticity and calculus of variations to describe models of strongly anisotropic materials. For instance, in the elasticity theory, the weight function $a(\cdot,\cdot)$ is called modulating coefficient and dictates the geometry of composites made of two different materials with distinct power hardening $p$ and $q$, see Zhikov \cite{Zhikov 2}.

		When $a\equiv1$ or $a\equiv0$, Problem \eqref{eqL} involves the well-known fractional  $(p,q)$-Laplacian operator or the fractional $p$-Laplacian operator, respectively. As far as we know, this is the first work that addresses the fractional version of the classic double-phase problem (e.g., see \cite{Liu and Dai 1,Colasuonno and Squassina}) in which the weight function is not necessarily a constant and can vanishes. 
		
		\vspace{0,2cm}
		Coming back to problem \eqref{eqL},  it is clear that $\Phi_{x,y}(1)\leq C$ for a.e. $(x,y) \in\Omega\times\Omega$ and
		$$\Phi_{x,y}(2t)=|2t|^p+a(x,y)|2t|^q\leq 2^q\Phi(x,y,t),\quad (x,y) \in \Omega \times \Omega,\; t\in\mathbb{R},$$
		that is, $\Phi$ satisfies the $(\Delta_{2})$-condition. We note that by direct computation,
		$$t\varphi_{x,y}(t)=\Phi_{x,y}'(t)=|t|^{p-2}t + a(x,y)|t|^{q-2}t, \; t\neq0,$$
		and thus, the conditions $(\varphi_1)-(\varphi_3)$ are satisfied with $\ell=p$ and $m=q$. 
		
		Regarding to the nonlinearity we may consider $F(x,t)=|t|^r\log(1+|t|)$ with $q<r<p_s^\ast -1$. In this case we have that
		$$f(x,t)=r|t|^{r-2}t\log(1+|t|)+\frac{|t|^{r-1}t}{1+|t|}, \; t\neq0, \qquad \overline{F}(x,t)=\frac{|t|^{r+1}}{1+|t|} +(r-q)|t|^r\log(1+|t|).
		$$
		Let us see that $f$ satisfies the conditions $(f_1)-(f_4)$. 
		Considering $\Psi(x,t)=F(x,t)$, it follows that
		$$t\psi(x,t)=\Psi'(x,t)=rt^{r-1}\log(1+t)\frac{t^r}{1+t}, \quad t>0,$$
		and so,
		$$\frac{t^2\psi(x,t)}{\Psi(x,t)}=r+\frac{t}{(1+t)\log(1+t)}, \quad t>0.$$
		In this case, taking $\ell_\Psi=r$, $m_\Psi=r+1$, we have that $(f_1)$ is satisfied.
		
		Now, consider the function $\Gamma(x,t)=|t|^\gamma$ with $N/p<\gamma<r/(r-p)$. Thus,
		$$\Gamma\left(x,\frac{F(x,t)}{|t|^p}\right)= |t|^{\gamma(r-p)}\log(1+|t|)^\gamma.$$
		Since $\gamma(r-p)<r$, 
		$$\lim_{|t|\rightarrow +\infty}\frac{(1+|t|)|t|^{\gamma(r-p)}\log(1+|t|)^\gamma}{|t|^{r+1}}=0.$$
		Thus, for every $C>0$, there exists $R>0$ such that
		$$(1+|t|)|t|^{\gamma(r-p)}\log(1+|t|)^\gamma\leq C|t|^{r+1},\quad \mbox{for}\; |t|\geq R,$$
		what gives us
		$$\Gamma\left(x,\frac{F(x,t)}{|t|^p}\right)\leq C\frac{|t|^{r+1}}{1+|t|} \leq C\overline{F}(x,t),\quad \mbox{for}\; |t|\geq R,$$
		that is, the condition $(f_2)$ holds.
		\noindent It is clear that $f$ satisfies the conditions $(f_3)$ and $(f_4)$. Therefore, we are able to apply Theorem \ref{theorem1}.
	
	\begin{remark}
		We point out that, since $\lim_{|t|\rightarrow +\infty}\frac{F(x,t)}{|t|^\theta} = 0$ for all $\theta>r$, then the nonlinearity $f$ does not satisfies Ambrosetti--Rabinowitz condition.
	\end{remark}
	\subsection{Fractional $p(x,\cdot)$-Laplacian operator }\label{ex2}
		Given the function
		$\Phi_{x,y}(t)=\frac{1}{p(x,y)}|t|^{p(x,y)}$ with $p\in C(\Omega\times\Omega)$ symmetric and satisfying the following condition
				$$1<p^{-}\leq p(x,y)\leq p^{+}<N, \quad (x,y)\in \Omega\times\Omega,
		$$
		as a second example we have the well-known fractional $p(x,\cdot)$-Laplacian operator
		$$
		(-\Delta)_{\Phi_{x,y}}^su(x):=(-\Delta)_{p(x,\cdot)}u(x)=2\text{p.v.}\int_{\mathbb{R}^N} \frac{|u(x)-u(y)|^{p(x,y)-2}(u(x)- u(y))}{|x-y|^{N+sp(x,y)}}\;\mathrm{d}y.
		$$

		
		It is not hard to see that $\Phi_{x,y}$ satisfy conditions $(\varphi_1)-(\varphi_3)$ with $\ell=p^{-}$ and $m=p^{+}$. This type of operator has applications in several fields of physics and mathematics, for example, filtration of fluids in porous media, restricted heating, elastoplasticity, image processing, optimal control and financial mathematics, see \cite{Repovs et.al}. For a more comprehensive study of nonlocal problems of this nature, we refer the reader to \cite{Bahrouni and Radulescu,Rossi et. al,Azroul0}.
		
		In this case we consider  $F(x,t):=|t|^{r}\log(1+|t|)$ such that $p^{+}<r <(p^{-})_s^\ast -1$ which gives
		$$\overline{F}(x,t)=\frac{|t|^{r+1}}{1+|t|} +(r-p^+)|t|^{r}\log(1+|t|).$$
		Hence, considering the function $\Gamma(x,t)=|t|^\gamma$ with $\frac{N}{p^{-}}<\gamma<\frac{r}{r-p^{-}}$, 
		it is clear that $f$ satisfies the assumptions $(f_1)-(f_4)$. Therefore, Theorem \ref{theorem1} can be applied to obtain existence of solution of \eqref{eqL}.

	\subsection{Logarithmic perturbation of the $p(x,\cdot)$-Laplacian operator}

	Given the function $\Phi_{x,y}(t)=|t|^{p(x,y)}\log(1+|t|)$ with $p\in C(\Omega\times\Omega)$ symmetric and satisfying the following condition
			$$1<p^{-}\leq p(x,y)\leq p^{+}<N-1, \quad (x,y)\in \Omega\times\Omega,$$ 
	as a third example, we can consider the operator
	$$
	(-\Delta)_{\Phi_{x,y}}^su(x) := 
	2\text{p.v.} \int_{\mathbb{R}^N} \left( p(x,y)|D_su|^{p(x,y)-2}\log(1+|D_su|)+\frac{|D_su|^{p(x,y)-1}}{1+|D_su|}\right) D_su\;\frac{\mathrm{d}y}{|x-y|^{N+s}},
	$$
	and  problem \eqref{eqL} with $F(x,t):=|t|^{r}\log(1+|t|)$ with $p^{+}+1<r<(p^{-})_s^\ast -1$.

		By direct computations, we have
		\begin{equation}\label{eq-5.1}
			t\varphi_{x,y}(t)=\Phi_{x,y}'(t)=p(x,y)t^{p(x,y)-1}\log(1+t)+\frac{t^{p(x,y)}}{1+t}, \quad (x,y)\in\Omega\times\Omega,\; t>0,
		\end{equation}

		When $p(x,y) \equiv p$, $(-\Delta)_{\Phi_{x,y}}^s$ is a fractional version of the logarithmic perturbation of the classical fractional $p$-Laplacian problem. It is clear that $\Phi_{x,y}$ satisfies the assumptions $(\varphi_1)$ and $(\varphi_2)$. It remains to
		show that $(\varphi_3)$ holds. Indeed, note that
		$$p^{-}\leq p(x,y)\leq p(x,y)+\frac{t}{(1+t)\log(1+t)}=\frac{t^2\varphi_{x,y}(t)}{\Phi_{x,y}(t)}, \quad (x,y)\in\Omega\times\Omega,\; t>0.$$
		In addition, 
		$$\lim_{t\rightarrow0^+}\frac{t^2\varphi_{x,y}(t)}{\Phi_{x,y}(t)}=p(x,y)+1\quad \mbox{and}\quad \lim_{t\rightarrow +\infty}\frac{t^2\varphi_{x,y}(t)}{\Phi_{x,y}(t)}=p(x,y), \quad (x,y)\in\Omega\times\Omega.$$
		Thus, since $\frac{t^2\varphi_{x,y}(t)}{\Phi_{x,y}(t)}$ is continuous on $\Omega\times\Omega\times(0,+\infty)$, it follows that 
		$$p^- \leq \frac{t^2\varphi_{x,y}(t)}{\Phi_{x,y}(t)}\leq p^+ + 1, \quad (x,y)\in\Omega\times\Omega,\; t>0.
		$$
		This we conclude that the condition $(\varphi_3)$ is satisfied for $\ell=p^-$ and $m=p^+ +1$.
		As in Example \ref{ex2}, let us consider  $F(x,t):=|t|^{r}\log(1+|t|)$ with $p^{+}+1<r<(p^{-})_s^\ast -1.$ Then, the conditions $(f_1)$ and $(f_2)$ hold. It remains to verify $(f_3)$ and $(f_4)$. Firstly, note that
		$$\frac{f(x,t)}{|t|^{(p^+ +1)-1}}=\frac{f(x,t)}{|t|^{p^+}}=r|t|^{r-p^+ -2}t\log(1+|t|) + \frac{|t|^{r-p^+ -1}t}{1+|t|}.$$
		So, 
		$$\lim_{t\rightarrow+\infty} \frac{f(x,t)}{|t|^{p^+}}=\lim_{t\rightarrow+\infty}r|t|^{r-p^+-1}\log(1+|t|) + \frac{|t|^{r-p^+}}{1+|t|} = +\infty.$$
		This shows that $(f_3)$ holds. Now, for every $x\in\Omega$, we have by \eqref{eq-5.1} that
		$$\begin{aligned}
			\left|\frac{f(x,t)}{|t|\widehat{\varphi}(x,|t|)}\right|= \left|\frac{f(x,t)}{|t|\varphi_{x,x}(|t|)}\right| &\leq \frac{r|t|^{r-1}\log(1+|t|)+\frac{|t|^r}{(1+|t|)}}{p(x,x)|t|^{p(x,x)-1}\log(1+|t|)+\frac{|t|^{p(x,x)}}{1+|t|}}\\
			&\leq\frac{r(1+|t|)|t|^{r-1}\log(1+|t|) + |t|^r}{|t|^{p(x,x)-1}}.
		\end{aligned}$$
		In view from last inequality, it results that
		$$\begin{aligned}
			\lim_{t\rightarrow 0} \left|\frac{f(x,t)}{|t|\widehat{\varphi}(x,|t|)}\right|&\leq \lim_{t\rightarrow 0} \frac{r(1+|t|)|t|^{r-1}\log(1+|t|) + |t|^r}{|t|^{p(x,x)-1}}\\
			& \leq \lim_{t\rightarrow 0} \frac{r(1+|t|)|t|^{r-1}\log(1+|t|) + |t|^r}{|t|^{p+}}
			=0,
		\end{aligned}$$
		uniformly for $x \in \Omega$, and thus $(f_4)$ holds. Therefore, we can apply Theorem \ref{theorem1}.
	
	\vspace{0,5cm}
	
	\begin{acknowledgement}
	 	The first and third authors were partially supported by CNPq with grants 304699/2021-7 and 316643/2021-1, respectively. The second author was partially supported by CAPES.
	\end{acknowledgement}

	
	\bigskip
	\medskip


		\bigskip
		
	\end{document}